\theoremstyle{plain}
\newtheorem{thm}{Theorem}[section]
\newtheorem*{thmnonumber}{Theorem}
\newtheorem{prop}[thm]{Proposition}
\newtheorem*{propnonumber}{Proposition}
\newtheorem{lemma}[thm]{Lemma}
\newtheorem{cor}[thm]{Corollary}
\newtheoremstyle{underline}
{}        
{}              
{}              
{}    
{\large}              
{:}             
{1mm}         
{{\underline{\thmname{#1}\thmnumber{ #2}}}}  
\theoremstyle{underline}
\newtheorem*{claim*}{Claim}
\newtheorem{claim}{Claim}
\theoremstyle{definition}
\newtheorem{defi}[thm]{Definition}
\theoremstyle{remark}
\newtheorem{remark}[thm]{Remark}
\newtheorem{ex}[thm]{Example}
\newtheorem{exs}[thm]{Examples}
\newcommand{\CC}{\ensuremath{\mathbb C}}
\newcommand{\RR}{\ensuremath{\mathbb R}}
\newcommand{\g}{\ensuremath{\mathfrak{g}}}
\newcommand*\conj[1]{\bar{#1}}
\newcommand{\pd}[1]{{\partial_{#1}}} 
\definecolor{forest}{rgb}{0,0.5,0}
\newcommand{\squeezeup}{\vspace{-7mm}}
\begin{document}

\title{Coisotropic submanifolds in  $b$-symplectic geometry}

\author{Stephane Geudens}
\address{KU Leuven, Department of Mathematics, Celestijnenlaan 200B box 2400, BE-3001 Leuven, Belgium
 }
\email{stephane.geudens@kuleuven.be}

\author{Marco Zambon}
\email{marco.zambon@kuleuven.be}


\begin{abstract}
We study coisotropic submanifolds of $b$-symplectic manifolds. 
We prove that  $b$-coisotropic submanifolds (those transverse to the degeneracy locus) determine the $b$-symplectic structure in a neighborhood, and provide a normal form theorem. This extends Gotay's theorem in symplectic geometry. 
Further, we introduce strong $b$-coisotropic submanifolds and show that their coisotropic quotient, which locally is always smooth, inherits a reduced $b$-symplectic structure.
\end{abstract}

\maketitle

\setcounter{tocdepth}{1} 
\tableofcontents

\section*{Introduction}

In symplectic geometry, an important and interesting class of submanifolds are the coisotropic ones. They are the submanifolds $C$ satisfying $TC^{\Omega}\subset TC$, where $TC^{\Omega}$ denotes the symplectic orthogonal of the tangent bundle $TC$. They arise for instance as zero level sets of moment maps, and in mechanics as those submanifolds that are given by first class constraints (see Dirac's theory of constraints). The notion of coisotropic submanifolds extends to the wider realm of Poisson geometry, and it plays an important role there too: for instance, a map is a  Poisson morphism {if and only if} its graph is coisotropic, and coisotropic submanifolds admit canonical quotients which inherit a Poisson structure.

The Poisson structures  which are non-degenerate at every point are exactly the symplectic ones. {Relaxing slightly the non-degeneracy condition, one obtains Poisson structures $(M,\Pi)$ for which the top power $\wedge^n \Pi$  is   transverse to the zero section of the line bundle $\wedge^{2n} TM$ (here $\dim(M)=2n$): they are called \emph{log-symplectic} structures}. 
They are symplectic outside {the vanishing set of $\wedge^n \Pi$,  a hypersurface} which inherits a codimension-one symplectic foliation. Log-symplectic structures are studied systematically {by Guillemin-Miranda-Pires} in \cite{miranda}, and turn out to be equivalent to \emph{$b$-symplectic} structures. The latter are defined on manifolds $M$ with a choice of codimension-one submanifold $Z$, as follows: they are non-degenerate sections $\omega$ of $\wedge^2 ({}^{b}TM)^*$ which are closed w.r.t. the de Rham differential, where ${}^{b}TM$ is the $b$-tangent bundle (a Lie algebroid over $M$ which encodes $Z$). In other words, they are the analogue of symplectic forms if one replaces the tangent bundle  with the $b$-tangent bundle. Because of this, various phenomena in symplectic geometry have counterparts for log-symplectic manifolds.

{This paper is devoted to coisotropic submanifolds of log-symplectic manifolds.}
We single out two classes, which we call \emph{$b$-coisotropic} and \emph{strong $b$-coisotropic}.
We prove that certain properties of coisotropic submanifolds in symplectic geometry {-- 
properties which certainly do not carry over to arbitrary coisotropic submanifolds of log-symplectic manifolds -- do }
carry over to the above classes. {Moreover, we show that these classes of submanifolds enjoy some properties that are $b$-geometric enhancements of well-known facts about coisotropic submanifolds in Poisson geometry.} We now elaborate on this.

\bigskip
\textbf{Main results.}  
Let $(M,Z,\omega)$ be a $b$-symplectic manifold, and denote by $\Pi$ the corresponding Poisson tensor on $M$.
We consider two classes of  submanifolds which are coisotropic {(in the sense of Poisson geometry)} with respect to $\Pi$.

A submanifold  of $M$ is called \emph{$b$-coisotropic} if it is coisotropic   and {a $b$-submanifold (i.e. transverse to $Z$)}.
An equivalent characterization is the following: a 
$b$-submanifold $C$ such that $({}^{b}TC)^\omega\subset{}^{b}TC$. 
{The latter} formulation makes apparent that this notion is very natural in $b$-symplectic geometry. 
 Section \ref{sec:bcoiso} is devoted to {the class of $b$-coisotropic} submanifolds.

We show that the $b$-conormal bundle of a $b$-coisotropic submanifold is a Lie subalgebroid. We also show  that for Poisson maps between {log-symplectic} manifolds compatible with the corresponding hypersurfaces, the graphs are $b$-coisotropic submanifolds, once ``lifted'' to a suitable blow-up \cite{GuLi}. Both of these statements are $b$-geometric analogs of well-known facts about coisotropic submanifolds in Poisson geometry.
Next, in Theorem \ref{gotay} we show that Gotay's theorem in symplectic geometry \cite{Gotay}
extends to $b$-coisotropic submanifolds in $b$-symplectic geometry. The main consequence is a   normal form theorem for the $b$-symplectic structure around such submanifolds:

\begin{thmnonumber} A neighborhood of a $b$-coisotropic  submanifold $C\overset{i}{\hookrightarrow}(M,Z,\omega)$ 
is $b$-symplectomorphic to the following model: 
$$(\text{a neighborhood of the zero section in $E^*$, $\Omega$}),$$
{where the vector bundle $E:=\ker(^bi^*\omega)$ denotes the kernel of the pullback of $\omega$ to $C$, and $\Omega$ is a $b$-symplectic  form   which is constructed out of the pullback $^bi^*\omega$ and is canonical up to
neighborhood equivalence (see equation \eqref{Omega} for the precise formula).}
\end{thmnonumber}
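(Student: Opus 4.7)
The strategy follows the classical Gotay normal-form theorem, executed in the $b$-category throughout. I would proceed in three steps.

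First, I would verify that $E := \ker({}^b i^*\omega)$ is a smooth vector subbundle of ${}^b TC$ of rank equal to the codimension of $C$. Transversality of $C$ to $Z$ makes ${}^b TC$ a well-defined Lie algebroid on $(C, C\cap Z)$, and the $b$-coisotropy condition $({}^b TC)^\omega \subset {}^b TC$ forces the kernel of ${}^b i^*\omega$ to have constant rank. Pairing with $\omega$ then identifies $E^*$ with the $b$-normal bundle of $C$ in $M$; this identification also tells us which $b$-structure to put on the total space $E^*$ (namely, the one whose degeneracy hypersurface is the preimage of $C\cap Z$).

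Second, I would construct the model $b$-symplectic form $\Omega$ on a neighborhood of the zero section of $E^* \to C$. Following Gotay's recipe, I would pick a splitting of $0 \to E \to {}^b TC \to {}^b TC/E \to 0$, use this splitting to build a tautological $b$-$1$-form $\lambda$ on $E^*$ (a $b$-analogue of the Liouville form, using the identification $E^*\simeq {}^b\nu C$), and set $\Omega$ equal to a combination of $d\lambda$ and the pullback of ${}^b i^*\omega$. Closedness is automatic, and non-degeneracy along the zero section follows from the non-degeneracy of $\omega$ combined with the splitting, so $\Omega$ is $b$-symplectic on a neighborhood. Independence from the splitting up to neighborhood equivalence is part of what the theorem is asserting and will be handled by the Moser step below.

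Third, I would apply a $b$-Moser argument. Using a $b$-tubular neighborhood theorem, I would embed a neighborhood $U$ of the zero section of $E^*$ into $M$ via some $\phi$ extending $i$ and sending the preimage of $C\cap Z$ into $Z$. Then $\phi^*\omega$ and $\Omega$ are two $b$-symplectic forms on $U$ with the same restriction to $C$. A relative Poincar\'e lemma for $b$-forms, stating that a closed $b$-form vanishing on $C$ admits a primitive also vanishing on $C$, produces a $b$-$1$-form $\beta$ with $d\beta = \Omega - \phi^*\omega$ and $\beta|_C=0$. The linear interpolation $\omega_t = \phi^*\omega + t\,d\beta$ is non-degenerate on a neighborhood of $C$, and the time-dependent $b$-vector field $v_t$ defined by $\iota_{v_t}\omega_t = -\beta$ vanishes on $C$, hence integrates on a sufficiently small neighborhood; its time-$1$ flow is the desired $b$-symplectomorphism.

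The main obstacle is establishing that each classical ingredient (tubular neighborhood, relative Poincar\'e lemma, integrability of the Moser flow) has a $b$-analogue compatible with the logarithmic vanishing along $Z$. Concretely, one must verify that the primitive $\beta$ and the vector field $v_t$ really lie in the $b$-category and that $v_t$ integrates to a $b$-diffeomorphism preserving $Z$. Here the transversality of $C$ to $Z$ is crucial: it guarantees that the $b$-tubular neighborhood respects the degeneracy locus and that the relative $b$-Poincar\'e lemma can be proved by the usual homotopy-operator argument applied to $b$-forms. Once these $b$-analogues are in place, the proof tracks the symplectic one almost verbatim.
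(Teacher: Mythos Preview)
Your overall architecture matches the paper's: build the model $\Omega$ on $E^*$ via a splitting and the tautological $b$-one-form, then compare it to $\omega$ via a $b$-tubular neighborhood and run a relative $b$-Moser argument. The paper proves exactly the auxiliary tools you anticipate (relative $b$-Poincar\'e lemma, relative $b$-Moser), so your identification of the needed ingredients is accurate.

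There is, however, one genuine gap in Step~3. You write that for a $b$-tubular-neighborhood embedding $\phi$, the forms $\phi^*\omega$ and $\Omega$ ``have the same restriction to $C$'', and then run Moser. But what you actually know for an arbitrary such $\phi$ is only that the \emph{pullbacks} ${}^b i^*(\phi^*\omega)$ and ${}^b i^*\Omega$ agree (both equal $\omega_C$). That suffices for the relative $b$-Poincar\'e lemma, but it does \emph{not} guarantee that the convex combination $\omega_t = \phi^*\omega + t(\Omega - \phi^*\omega)$ is non-degenerate along $C$: two $b$-symplectic forms with the same pullback to $C$ can differ in the normal directions, and their linear interpolation may degenerate. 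The Moser step genuinely needs $\phi^*\omega|_C = \Omega|_C$ as sections of $\wedge^2\,{}^b T^*E^*$ over $C$.

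Arranging this equality is the heart of the proof and is not automatic. The paper handles it as follows: choose a Lagrangian complement $V$ to $E$ inside $({}^b TC)^\omega \cap G^\omega$, build a $b$-tubular neighborhood $\phi\colon V \to M$ whose $b$-derivative along $C$ is the identity on ${}^b TM|_C = {}^b TC \oplus V$, and then pass to $E^*$ via the explicit isomorphism $\psi\colon V \to E^*$, $v \mapsto -\iota_v\omega|_E$. A direct computation (Claim~2 in the paper) then shows that $(\psi\circ\phi^{-1})^*\Omega$ and $\omega$ agree pointwise along $C$. Your sketch alludes to the identification $E^* \cong {}^b\nu C$ in Step~1, but you do not specify that the tubular map must be built through this particular $\omega$-dependent isomorphism, nor do you indicate how to verify the pointwise agreement. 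That is the step you should flesh out.
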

{Such a normal form} allows to study effectively the deformation theory of $C$ as a coisotropic submanifold \cite{GZbdefs}. Another possible application is the construction of $b$-symplectic manifolds using surgeries, as done for instance in \cite[Theorem 6.1]{ExamGil}. We point out that in the special case of Lagrangian submanifolds,  the above result is a version of Weinstein's tubular neighborhood theorem, and was already obtained {by Kirchhoff-Lukat} \cite[Theorem 5.18]{CharlotteThesis}.

\bigskip
In Section \ref{sec:strongbcoiso} we consider the following subclass of the $b$-coisotropic submanifolds. 
A submanifold $C$   is called  \emph{strong $b$-coisotropic} if it is  coisotropic   and transverse to all the symplectic leaves of $ (M,\Pi)$ it meets. We remark that Lagrangian submanifolds intersecting the degeneracy hypersurface $Z$ never satisfy this definition.

The main feature of strong $b$-coisotropic submanifolds is that the characteristic distribution $$D:=\Pi^{\sharp}\left(TC^{0}\right),$$
is \emph{regular}, with rank equal to $codim(C)$.
{Recall the following   fact in Poisson geometry:} when the quotient of a coisotropic submanifold by its characteristic distribution is a smooth manifold, then it inherits a Poisson structure, called the reduced Poisson structure.
We  show (see Proposition \ref{prop:bcoisored} for the full statement):
\begin{propnonumber}
Let $C$ be a strong $b$-coisotropic submanifold of a $b$-symplectic manifold. If the quotient $C/D$ by the characteristic distribution is smooth, then the reduced Poisson structure is again $b$-symplectic.
\end{propnonumber}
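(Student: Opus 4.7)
The plan is to verify the $b$-symplectic property of the reduced Poisson structure separately on the complement and on a neighborhood of a candidate degeneracy hypersurface in $\bar C$. First I construct this hypersurface. Being strong $b$-coisotropic, $C$ is transverse to every symplectic leaf of $Z$ it meets, hence also to $Z$, so $C\cap Z$ is a smooth hypersurface of $C$. At any $p\in C\cap Z$ the image $\Pi^\sharp(T_p^*M)$ coincides with the tangent space to the symplectic leaf of $(M,\Pi)$ through $p$, which lies in $Z$; combined with $D_p\subset T_pC$ this yields $D_p\subset T_p(C\cap Z)$, so $D$ is tangent to $C\cap Z$. Consequently $\bar Z:=\pi(C\cap Z)$ is a smooth codimension-one submanifold of $\bar C$, where $\pi\colon C\to\bar C$ denotes the quotient submersion.

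Away from $\bar Z$ the reduced structure agrees with the classical coisotropic reduction of the symplectic manifold $(M\setminus Z,\omega|_{M\setminus Z})$ by $C\setminus Z$, and is therefore symplectic. To handle a neighborhood of $\bar Z$, the plan is to establish the following adapted local normal form: at each $\tilde p\in C\cap Z$ there exist $b$-Darboux coordinates $(x,t,y_1,z_1,\ldots,y_{n-1},z_{n-1})$ on $M$, with $Z=\{x=0\}$ and $\omega=\tfrac{dx}{x}\wedge dt+\sum_i dy_i\wedge dz_i$, in which $C=\{z_1=\cdots=z_k=0\}$, where $k=\operatorname{codim}(C)$. In such coordinates one reads off $D=\operatorname{span}(\partial_{y_1},\ldots,\partial_{y_k})$, and extending $D$-invariant functions on $\bar C$ to $M$ as functions independent of $y_1,\ldots,y_k$ and $z_1,\ldots,z_k$, the Poisson bracket yields
$$\bar\Pi=x\,\partial_x\wedge\partial_t+\sum_{i=k+1}^{n-1}\partial_{y_i}\wedge\partial_{z_i},$$
which is the Poisson tensor of a $b$-symplectic structure on $\bar C$ with degeneracy locus $\{x=0\}=\bar Z$, proving the claim.

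The main obstacle is the adapted normal form above. A natural approach is to combine Theorem \ref{gotay} with a fiberwise Darboux argument: the former identifies a neighborhood of $C$ in $M$ with a neighborhood of the zero section in $(E^*,\Omega)$, and in the strong $b$-coisotropic case the anchor restricts to an isomorphism $E\xrightarrow{\sim}D$ (since $\Pi^\sharp\colon TC^0\to D$ factors through $E$ via the $b$-symplectic duality $\omega^\sharp$ and is bijective by the rank condition on $D$), making it possible to match the Gotay model to the standard $b$-Darboux form along the fibers. Alternatively, a direct $b$-Moser argument, comparing the ambient $b$-symplectic form with the proposed model along $C\cap Z$, could produce the adapted coordinates. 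Once the normal form is secured, the bracket computation above completes the proof.
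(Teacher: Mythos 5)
Your skeleton --- check the log-symplectic condition pointwise, treat the symplectic locus by classical coisotropic reduction, and compute the reduced bracket near $C\cap Z$ in an adapted chart --- is sound, and the bracket computation you give is correct \emph{granted} the adapted normal form. But that normal form (simultaneous $b$-Darboux coordinates for $\omega$ in which $C=\{z_{1}=\cdots=z_{k}=0\}$) is precisely where the content of the proposition lies, and you do not prove it: you flag it as ``the main obstacle'' and only gesture at two strategies without carrying either out. Neither is immediate. The route via Theorem \ref{gotay} identifies a neighborhood of $C$ with a neighborhood of the zero section in $(E^{*},\Omega_{G})$, but to pass from there to your coordinates you still need a Darboux-type normal form for the $b$-presymplectic form ${}^{b}i^{*}\omega$ on $C$ whose kernel is transverse to $\ker\rho$ (this is where strongness enters, via Corollary \ref{cor:D}); that lemma is true, but it is nowhere in the paper and needs its own proof, e.g.\ by observing that ${}^{b}i^{*}\omega$ is basic for the kernel foliation, passing to a local slice transverse to $D$, and applying the $b$-Darboux theorem of Guillemin--Miranda--Pires there. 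Your alternative $b$-Moser sketch presupposes a diffeomorphism already matching $C$, $Z$ and the forms along $C$, which is the same work in disguise. A smaller point: you assert rather than argue that $\pi(C\cap Z)$ is a smooth hypersurface of the quotient; this also requires the slice argument.

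For comparison, the paper's proof of Proposition \ref{prop:bcoisored} bypasses any normal form. Tangency of $D$ to $Z$ (Corollary \ref{cor:D}) together with the canonical lift of Lemma \ref{lem:distrsplit} and Corollary \ref{split} yields the exact sequence $0\rightarrow({}^{b}TC)^{\omega}\rightarrow{}^{b}TC\rightarrow{}^{b}T\underline{C}\rightarrow 0$; choosing a local section $S$ of $q$ gives ${}^{b}TC|_{S}=({}^{b}TC)^{\omega}\oplus{}^{b}TS$, so ${}^{b}i_{S}^{*}\omega$ is $b$-symplectic on $S$ and descends to a $b$-symplectic form $\underline{\Omega}$ on the quotient; finally, continuity from the open dense symplectic part together with Remark \ref{rem:samered} identifies the corresponding log-symplectic structure with the reduced Poisson structure. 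If you do complete your normal-form lemma you obtain a stronger local statement (an explicit model for the reduction), but an honest proof of that lemma essentially passes through the same slice construction the paper uses, plus the $b$-Darboux theorem and Theorem \ref{gotay}; for the proposition at hand this extra machinery is not needed. As it stands, your proposal has a genuine gap at its central step.
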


Instances of the above proposition arise when a {connected} Lie group acts on a $b$-symplectic manifold with equivariant moment map, in the sense of Poisson geometry, and
$C$ is the zero level set of the latter, see
Corollary \ref{reduction}. At the end of the paper we provide  examples of   $b$-symplectic 
quotients, and -- by  reversing the procedure -- in Corollary \ref{cor:explh} we realize any $b$-symplectic structure on the 2-dimensional sphere as such a quotient. 
\bigskip

In order to state and prove these results, in Section \ref{sec:background} we collect some  facts about $b$-geometry.
A few of them are new, {to the best of our knowledge,} and are of independent interest.
More specifically, in Lemma \ref{lem:distrsplit} we  show  that, while the anchor map of the $b$-tangent bundle does not admit a canonical splitting,  distributions tangent to $Z$ do have a canonical lift to the $b$-tangent bundle. In Proposition \ref{moser} we provide a version of the $b$-Moser theorem relative to a $b$-submanifold, which we could not find elsewhere in the literature.

\bigskip
\textbf{Acknowledgements.}  
 We acknowledge partial support by the long term structural funding -- Methusalem grant of the Flemish Government, the FWO under EOS project G0H4518N, the FWO research project G083118N (Belgium).

\section{Background on $b$-geometry} \label{sec:background}
In this section, we address the formalism of $b$-geometry, which originated from work of Melrose \cite{Melrose} in the context of manifolds with boundary.  We review some of the main concepts, including $b$-symplectic structures, and we prove some preliminary results that will be used in the body of this paper.

\subsection{$b$-manifolds and $b$-maps}
\leavevmode
\vspace{0.15cm}

We first introduce the objects and morphisms of the $b$-category, following \cite{miranda}.

\begin{defi}
A \emph{$b$-manifold} is a pair $(M,Z)$ consisting of a manifold $M$ and a codimension-one submanifold $Z\subset M$.	
\end{defi}

Given a $b$-manifold $(M,Z)$, we denote by ${}^{b}\mathfrak{X}(M)$ the set of vector fields on $M$ that are tangent to $Z$. Note that ${}^{b}\mathfrak{X}(M)$ is a locally free $C^{\infty}(M)$-module, with generators
\[
x_{1}\partial_{x_{1}},\partial_{x_{2}},\ldots,\partial_{x_{n}}
\]  
in a coordinate chart $(x_{1},\ldots,x_{n})$ adapted to $Z=\{x_{1}=0\}$. Thanks to the Serre-Swan theorem, these $b$-vector fields give rise to a vector bundle ${}^{b}TM$.

\begin{defi}
Let $(M,Z)$ be a $b$-manifold. The \emph{$b$-tangent bundle} ${}^{b}TM$ is the vector bundle over $M$ satisfying $\Gamma\left({}^{b}TM\right)={}^{b}\mathfrak{X}(M)$.
\end{defi} 

The natural inclusion ${}^{b}\mathfrak{X}(M)\subset \mathfrak{X}(M)$ induces a vector bundle map $\rho:{}^{b}TM\rightarrow TM$, which is an isomorphism away from $Z$. Restricting to $Z$, we get a bundle epimorphism $\rho|_{Z}:{}^{b}TM|_{Z}\rightarrow TZ$, which gives rise to a trivial line bundle $\mathbb{L}:=\text{Ker}\left(\rho|_{Z}\right)$. Indeed, $\mathbb{L}$ is canonically trivialized by the normal $b$-vector field $\xi\in\Gamma(\mathbb{L})$, which is locally given by $x\partial_{x}$ where $x$ is any local defining function for $Z$. So at any point $p\in Z$, we have a short exact sequence
\begin{equation}\label{eq:sesL}
0\rightarrow\mathbb{L}_{p}\hookrightarrow{}^{b}T_{p}M\overset{\rho}{\rightarrow}T_{p}Z\rightarrow 0,
\end{equation} 
but this sequence does not split canonically.

\vspace{0.5cm}

Since ${}^{b}\mathfrak{X}(M)$ is a Lie subalgebra of $\mathfrak{X}(M)$, it inherits a natural Lie bracket $[\cdot,\cdot]$. The data $(\rho,[\cdot,\cdot])$ endow ${}^{b}TM$ with a Lie algebroid structure. The map $\rho$ is called the anchor of ${}^{b}TM$.

\begin{defi}
Let $(M,Z)$ be a $b$-manifold. The \emph{$b$-cotangent bundle} ${}^{b}T^{*}M$ is the dual bundle  of ${}^{b}TM$.	
\end{defi}
In coordinates $(x_{1},\ldots,x_{n})$ adapted to $Z=\{x_{1}=0\}$, the $b$-cotangent bundle ${}^{b}T^{*}M$ has local frame
\[
\frac{dx_{1}}{x_{1}},dx_{2},\ldots,dx_{n}.
\]

We will denote the set $\Gamma\left(\wedge^{k}\left({}^{b}T^{*}M\right)\right)$ of Lie algebroid $k$-forms by ${}^{b}\Omega^{k}(M)$, and we refer to them as $b$-$k$-forms. The space ${}^{b}\Omega^{\bullet}(M)$ is endowed with the Lie algebroid differential ${}^{b}d$, which is determined by the fact that the restriction $\left({}^{b}\Omega^{k}(M),{}^{b}d\right)\rightarrow \left(\Omega^{k}(M\setminus Z),d\right)$ is a chain map. Note that the anchor $\rho$ induces an injective map $\rho^{*}:\Omega^{k}(M)\rightarrow{}^{b}\Omega^{k}(M)$, which allows us to view honest de Rham forms as $b$-forms.

\begin{defi}
Given $b$-manifolds $(M_{1},Z_{1})$ and $(M_{2},Z_{2})$, a \emph{$b$-map} $f:(M_{1},Z_{1})\rightarrow (M_{2},Z_{2})$ is a smooth map $f:M_{1}\rightarrow M_{2}$ such that $f$ is transverse to $Z_{2}$ and $f^{-1}(Z_{2})=Z_{1}$.
\end{defi}

Given a $b$-map $f:(M_{1},Z_{1})\rightarrow (M_{2},Z_{2})$, the usual pullback $f^{*}:\Omega^{\bullet}(M_{2})\rightarrow \Omega^{\bullet}(M_{1})$ extends to an algebra morphism  ${}^{b}f^{*}:{}^{b}\Omega^{\bullet}(M_{2})\rightarrow{}^{b}\Omega^{\bullet}(M_{1})$, see \cite[Proof of Proposition 3.5.2]{RalphThesis}. That is, we have a commutative diagram
\[
\begin{tikzcd}[column sep=large, row sep=large]
{}^{b}\Omega^{\bullet}(M_{2})\arrow{r}{^{b}f^{*}} & {}^{b}\Omega^{\bullet}(M_{1})\\
\Omega^{\bullet}(M_{2})\arrow{u}{\rho_{2}^{*}}\arrow{r}{f^{*}} & \Omega^{\bullet}(M_{1})\arrow{u}{\rho_{1}^{*}}\\
\end{tikzcd}.
\]
\squeezeup

This $b$-pullback has the expected properties; for instance, the assignment $f\mapsto {}^{b}f^{*}$ is functorial, and the $b$-pullback ${}^{b}f^{*}$ commutes with the $b$-differential ${}^{b}d$.

We can now define the Lie derivative of a $b$-form $\omega\in{}^{b}\Omega^{k}(M)$ in {the} direction of a $b$-vector field $X\in{}^{b}\mathfrak{X}(M)$ by the usual formula
\[
\pounds_{X}\omega=\left. \frac{d}{dt}\right|_{t=0}{}^{b}\rho_{t}^{*}\omega,
\]
where the $b$-pullback is well-defined since the flow $\{\rho_{t}\}$ of $X$ consists of $b$-diffeomorphisms. Cartan's formula is still valid
\[
\pounds_{X}\omega={}^{b}d\iota_{X}\omega+\iota_{X}{}^{b}d\omega.
\]

Dual to the $b$-pullback ${}^{b}f^{*}$, a $b$-map $f:(M_{1},Z_{1})\rightarrow (M_{2},Z_{2})$ induces a $b$-derivative ${}^{b}f_{*}:{}^{b}TM_{1}\rightarrow{}^{b}TM_{2}$, which is the unique morphism of vector bundles ${}^{b}TM_{1}\rightarrow{}^{b}TM_{2}$ that makes the following diagram commute {\cite[Proposition 3.5.2]{RalphThesis}}:

\begin{equation}\label{diag}
\begin{tikzcd}[column sep=large, row sep=large]
{}^{b}TM_{1}\arrow{r}{{}^{b}f_{*}} \arrow{d}{\rho_{1}} & {}^{b}TM_{2} \arrow{d}{\rho_{2}}\\
TM_{1}\arrow{r}{f_{*}}&TM_{2}\\
\end{tikzcd}.
\end{equation}
\squeezeup

At each point $p\in M_{1}$, the derivative $\left(f_{*}\right)_{p}$ and the $b$-derivative $\left({}^{b}f_{*}\right)_{p}$ have the same rank, by the next result proved in \cite{CK}.

\begin{lemma}\label{rank}
Let $f:(M_{1},Z_{1})\rightarrow (M_{2},Z_{2})$ be a $b$-map. The anchor $\rho_{1}$ of ${}^{b}TM_{1}$ restricts to an isomorphism $\left(\rho_{1}\right)_{p}:\text{Ker}\left({}^{b}f_{*}\right)_{p}\rightarrow\text{Ker}\left(f_{*}\right)_{p}$ for all $p\in M_{1}$.
\end{lemma}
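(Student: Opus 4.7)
The plan is to analyze the two cases $p \notin Z_1$ and $p \in Z_1$ separately. The first is immediate from the commutative diagram \eqref{diag}, since both $\rho_1$ and $\rho_2$ are isomorphisms at $p$ and $f(p)$ respectively, and ${}^bf_*$ at $p$ is conjugate to $f_*$ at $p$. So the interesting case is $p \in Z_1$, and I will address both well-definedness, injectivity, and surjectivity.

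Well-definedness of the map $\rho_1 \colon \mathrm{Ker}({}^bf_*)_p \to \mathrm{Ker}(f_*)_p$ follows from commutativity of \eqref{diag}: if $v \in \mathrm{Ker}({}^bf_*)_p$, then $f_*(\rho_1(v)) = \rho_2({}^bf_*(v)) = 0$. The core geometric input I would extract next is a local description of $f$: choose coordinates $(x, x_2, \dots, x_n)$ adapted to $Z_1 = \{x=0\}$ near $p$ and coordinates $(y, y_2, \dots, y_m)$ adapted to $Z_2 = \{y=0\}$ near $f(p)$. Since $f^{-1}(Z_2) = Z_1$, the function $f^*y$ vanishes exactly on $Z_1$, so $f^*y = gx$ for some smooth $g$; the transversality condition in the definition of a $b$-map forces $g(p) \neq 0$.

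Using this, I would first establish the inclusion $\mathrm{Ker}(f_*)_p \subset T_p Z_1 = \mathrm{Im}(\rho_1)_p$. Indeed, for $w = w_0 + a\,\partial_x|_p$ with $w_0 \in T_p Z_1$, one computes $\langle dy, f_*(w)\rangle = \langle d(gx), w\rangle|_p = a\,g(p)$, so $f_*(w) = 0$ forces $a = 0$. Next I would compute the image of the normal $b$-vector field: the $b$-pullback formula gives ${}^bf^*(dy/y) = d(gx)/(gx) = dg/g + dx/x$, which pairs with $\xi_1 = x\partial_x$ to yield $1$ at $p$, while pairing $dy_i$ ($i \geq 2$) with ${}^bf_*(\xi_1(p))$ vanishes because $x=0$ at $p$. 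This identifies
\[
{}^bf_*(\xi_1(p)) = \xi_2(f(p)).
\]

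With these two ingredients injectivity and surjectivity both follow quickly. For injectivity, any element of $\mathrm{Ker}(\rho_1) \cap \mathrm{Ker}({}^bf_*)_p$ lies in $\mathbb{L}_p = \mathrm{span}\{\xi_1(p)\}$, and the displayed identity shows $c\,\xi_1(p) \in \mathrm{Ker}({}^bf_*)_p$ forces $c=0$. For surjectivity, given $w \in \mathrm{Ker}(f_*)_p \subset T_p Z_1$, lift it (via the surjective $\rho_1$) to some $\tilde v \in {}^bT_p M_1$; by the commuting diagram ${}^bf_*(\tilde v) \in \mathrm{Ker}(\rho_2)_{f(p)} = \mathbb{L}_{f(p)}$, say ${}^bf_*(\tilde v) = \lambda\,\xi_2(f(p))$, and then $v := \tilde v - \lambda\,\xi_1(p)$ satisfies $\rho_1(v) = w$ and ${}^bf_*(v) = 0$.

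The main technical obstacle I expect is the computation of ${}^bf_*(\xi_1(p))$: it is the point where the transversality condition $g(p) \neq 0$ really enters and where one must unwind the definition of the $b$-derivative through the $b$-pullback on the dual side. Everything else is a routine diagram chase once that identity and the inclusion $\mathrm{Ker}(f_*)_p \subset T_p Z_1$ are in place.
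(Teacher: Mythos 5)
Your proposal is correct. Note that the paper itself does not prove Lemma \ref{rank}: it is quoted from the reference [CK], so there is no internal proof to compare against; your argument is a legitimate self-contained substitute. The structure is the standard one: reduce to $p\in Z_1$, write $f^*y=gx$ with $g(p)\neq 0$ (this is exactly where transversality enters, as you say), deduce $\mathrm{Ker}(f_*)_p\subset T_pZ_1=\mathrm{Im}(\rho_1)_p$, and establish the key identity ${}^bf_*(\xi_1(p))=\xi_2(f(p))$, i.e.\ that $b$-maps match up the canonical trivializations of $\mathbb{L}_1$ and $\mathbb{L}_2$; injectivity and surjectivity then follow by the diagram chase you describe, using $\ker(\rho_1)_p=\mathbb{L}_p$ and $\ker(\rho_2)_{f(p)}=\mathbb{L}_{f(p)}$. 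The only point worth flagging is your use of the duality $\langle{}^bf^*\alpha,v\rangle=\langle\alpha,{}^bf_*v\rangle$ to compute ${}^bf_*(\xi_1(p))$: the paper introduces ${}^bf_*$ as the unique bundle map making diagram \eqref{diag} commute and merely asserts it is ``dual'' to ${}^bf^*$, so strictly speaking you should justify the pointwise pairing identity. This is immediate, though: both sides agree on the dense set $M_1\setminus Z_1$ (where everything reduces to the ordinary $f_*$ and $f^*$ via the anchors) and both depend smoothly on the point, so they agree everywhere; alternatively one can compute ${}^bf_*(x\partial_x)$ near $p$ off $Z_1$ directly and pass to the limit. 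With that remark your proof is complete.
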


We finish this subsection by observing that, if a $b$-vector field {can be pushed forward by} the derivative $f_*$ of a $b$-map $f$, then its lift to a section of the $b$-tangent bundle {can be pushed forward by} the $b$-derivative ${}^{b}f_{*}$.
\begin{lemma}\label{bder}
	Let $f:(M_{1},Z_{1})\rightarrow(M_{2},Z_{2})$ be a {surjective} $b$-map,
	and let $\overline{Y}\in\Gamma({}^{b}TM_{1})$ be such that ${Y:=}\rho_{1}(\overline{Y})$ {pushes forward} to some element $W\in\mathfrak{X}(M_2)$. Then ${}^{b}f_{*}(\overline{Y})$ is a well-defined section of ${}^{b}TM_{2}$, and it equals the unique element $\overline{W}\in\Gamma\left({}^{b}TM_{2}\right)$ satisfying $\rho_{2}(\overline{W})=W$.
\end{lemma}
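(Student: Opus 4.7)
The plan is to establish the equality ${}^{b}f_{*}(\overline{Y})=\overline{W}\circ f$ pointwise, first on the open dense set where $\rho_{2}$ is injective, and then extend to $Z_{1}$ by continuity.

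First I would verify that the lift $\overline{W}$ actually exists. Since $\overline{Y}\in\Gamma({}^{b}TM_{1})$, the vector field $Y=\rho_{1}(\overline{Y})$ is tangent to $Z_{1}$, so at any $p\in Z_{1}$ one has $Y_{p}\in T_{p}Z_{1}$. Because $f$ is a $b$-map, $f^{-1}(Z_{2})=Z_{1}$ gives $f(Z_{1})\subseteq Z_{2}$, and therefore $W_{f(p)}=f_{*}(Y_{p})\in T_{f(p)}Z_{2}$. Surjectivity of $f$ combined with $f^{-1}(Z_{2})=Z_{1}$ forces $f(Z_{1})=Z_{2}$, so $W$ is tangent to $Z_{2}$, i.e.\ $W\in{}^{b}\mathfrak{X}(M_{2})=\Gamma({}^{b}TM_{2})$. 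Hence $W$ lifts uniquely to $\overline{W}\in\Gamma({}^{b}TM_{2})$ with $\rho_{2}(\overline{W})=W$; uniqueness follows because $\rho_{2}$ is an isomorphism on the dense open set $M_{2}\setminus Z_{2}$.

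Next I would prove the pointwise identity ${}^{b}f_{*}(\overline{Y}_{p})=\overline{W}_{f(p)}$ for all $p\in M_{1}$. The commutative diagram \eqref{diag} yields
$$\rho_{2}\bigl({}^{b}f_{*}(\overline{Y}_{p})\bigr)=f_{*}\bigl(\rho_{1}(\overline{Y}_{p})\bigr)=f_{*}(Y_{p})=W_{f(p)}=\rho_{2}(\overline{W}_{f(p)}),$$
and for $p\in M_{1}\setminus Z_{1}$ the anchor $(\rho_{2})_{f(p)}$ is an isomorphism, so the equality holds immediately. For $p\in Z_{1}$ I would argue by continuity: both $p\mapsto{}^{b}f_{*}(\overline{Y}_{p})$ and $p\mapsto\overline{W}_{f(p)}$ are smooth sections of the pulled-back bundle $f^{*}({}^{b}TM_{2})$, and they agree on the dense subset $M_{1}\setminus Z_{1}$, hence everywhere.

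Finally, the identity ${}^{b}f_{*}(\overline{Y}_{p})=\overline{W}_{f(p)}$ shows that ${}^{b}f_{*}(\overline{Y}_{p})$ depends only on $f(p)$, so by surjectivity of $f$ it descends to a well-defined section of ${}^{b}TM_{2}$, which is precisely $\overline{W}$. The only subtlety is that $\rho_{2}$ fails to be injective along $Z_{2}$, which prevents a direct pointwise argument at points of $Z_{1}$; density of $M_{1}\setminus Z_{1}$ together with smoothness of both sides resolves this cleanly.
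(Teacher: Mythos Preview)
Your proof is correct and follows essentially the same strategy as the paper: establish ${}^{b}f_{*}(\overline{Y}_{p})=\overline{W}_{f(p)}$ on $M_{1}\setminus Z_{1}$ via the commutative diagram \eqref{diag} and injectivity of $\rho_{2}$ there, then extend to $Z_{1}$ by continuity. The only cosmetic difference is that the paper packages the continuity step by restricting to a one-dimensional slice $S\pitchfork Z_{1}$ on which $f$ is a diffeomorphism, whereas you work directly with the two smooth sections of $f^{*}({}^{b}TM_{2})$ over all of $M_{1}$; your formulation is arguably cleaner.
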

\begin{proof}
	{Since $f$ is a $b$-map, we have that $W\in\mathfrak{X}(M_2)$ is tangent to $Z_{2}$, so indeed $W=\rho_{2}(\overline{W})$ for unique $\overline{W}\in\Gamma({}^{b}TM_{2})$. Now,
		first consider $p\in M_{1}\setminus Z_{1}$. Commutativity of the diagram \eqref{diag} implies that
		\[
		\rho_{2}\left(\left({}^{b}f_{*}\right)_{p}\left(\overline{Y}_{p}\right)\right)=\left(f_{*}\right)_{p}\left(\rho_{1}\left(\overline{Y}_{p}\right)\right)=\left(f_{*}\right)_{p}(Y_{p})=W_{f(p)}.
		\]
		But we also have $\rho_{2}\left(\overline{W}_{f(p)}\right)=W_{f(p)}$, so that injectivity of $\rho_{2}$ at $f(p)\in M_{2}\setminus Z_{2}$ implies $\left({}^{b}f_{*}\right)_{p}\left(\overline{Y}_{p}\right)=\overline{W}_{f(p)}$. Next, we choose $p\in Z_{1}$. Since $f$ is a $b$-map, we can take a ({one-dimensional}) slice $S$ through $p$ transverse to $Z_{1}$, such that the restriction $\left.f\right|_{S}:S\rightarrow f(S)$ is a diffeomorphism. Since $\left.\left({}^{b}f_{*}\right)\right|_{S}$ is a vector bundle map covering the diffeomorphism $f|_{S}$, the expression $\left.\left({}^{b}f_{*}\right)\right|_{S}\left(\left.\overline{Y}\right|_{S}\right)$ is well-defined and smooth. Moreover, it is equal to $\overline{W}|_{f(S)}$ on the dense subset $f(S)\setminus(f(S)\cap Z_{2})\subset f(S)$, as we just proved. By continuity, the equality $\left.\left({}^{b}f_{*}\right)\right|_{S}\left(\left.\overline{Y}\right|_{S}\right)=\overline{W}|_{f(S)}$ holds on all of $f(S)$, so that in particular $\left({}^{b}f_{*}\right)_{p}\left(\overline{Y}_{p}\right)=\overline{W}_{f(p)}$. This concludes the proof.}\qedhere
		\end{proof}

\subsection{$b$-submanifolds}
\leavevmode
\vspace{0.15cm} 

Given a $b$-manifold $(M,Z)$, a submanifold $C\subset M$ transverse to $Z$ inherits a $b$-manifold structure with distinguished  hypersurface $C\cap Z$. Such submanifolds are therefore the natural subobjects in the $b$-category.

\begin{defi}
	A \emph{$b$-submanifold} $C$ of a $b$-manifold $(M,Z)$ is a submanifold $C\subset M$ that is transverse to $Z$.
\end{defi}

Let $C\subset (M,Z)$ be a $b$-submanifold. The inclusion $i:(C,C\cap Z)\hookrightarrow (M,Z)$ of $b$-manifolds induces a canonical map ${}^{b}i_{*}:{}^{b}TC\rightarrow{}^{b}TM$ that is injective by Lemma \ref{rank}. This allows us to view ${}^{b}TC$ as a Lie subalgebroid of ${}^{b}TM$. In particular, we have the following fact.

\begin{lemma}\label{L}
	If $C\subset (M,Z)$ is a $b$-submanifold, then $\mathbb{L}_{p}\subset{}^{b}T_{p}C$ for all $p\in C\cap Z$.  
\end{lemma}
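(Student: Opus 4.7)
The plan is to reduce the statement to a diagram chase based on the naturality of the anchor map. Since $C$ is a $b$-submanifold, the pair $(C, C \cap Z)$ is itself a $b$-manifold, so it has its own anchor map $\rho_C : {}^bTC \to TC$, and for each $p \in C \cap Z$ a canonical line
\[
\mathbb{L}^C_p := \ker\bigl((\rho_C)_p\bigr) \subset {}^bT_pC,
\]
which is one-dimensional exactly as in \eqref{eq:sesL}. The strategy is to show that under the inclusion ${}^bi_* : {}^bTC \hookrightarrow {}^bTM$ (injective by Lemma~\ref{rank}), the line $\mathbb{L}^C_p$ is carried onto $\mathbb{L}_p$.

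First I would invoke the commutative diagram \eqref{diag} applied to $i : (C, C \cap Z) \hookrightarrow (M, Z)$, which gives $\rho_M \circ {}^bi_* = i_* \circ \rho_C$ on ${}^bTC$. For any $v \in \mathbb{L}^C_p$ we have $\rho_C(v) = 0$, so $\rho_M({}^bi_*(v)) = i_*(\rho_C(v)) = 0$, meaning ${}^bi_*(v) \in \ker(\rho_M)_p = \mathbb{L}_p$. Thus ${}^bi_*(\mathbb{L}^C_p) \subseteq \mathbb{L}_p$. Both lines are one-dimensional and ${}^bi_*$ is injective, so the inclusion is an equality; in particular $\mathbb{L}_p \subset {}^bT_pC$ after identifying ${}^bTC$ with its image under ${}^bi_*$.

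As a sanity check, one can verify this directly in coordinates. By transversality of $C$ and $Z$, one can choose coordinates $(x_1, \ldots, x_n)$ on $M$ near $p$ adapted to $Z = \{x_1 = 0\}$ in which $C = \{x_{k+1} = \cdots = x_n = 0\}$. Then $x_1$ is also a defining function for $C \cap Z$ inside $C$, and the local generator $x_1 \partial_{x_1}$ of $\mathbb{L}$ is manifestly tangent to $C$ and is the canonical trivializer of $\mathbb{L}^C$, confirming the identification above. No step here is a real obstacle; the only point requiring care is keeping track of which line bundle lives on $C$ and which on $M$, and using the naturality of the anchor to connect the two.
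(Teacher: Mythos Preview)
Your proof is correct and is essentially the same as the paper's: both arguments use the commutativity $\rho\circ{}^{b}i_{*}=i_{*}\circ\rho_{C}$ (the paper packages this in diagram~\eqref{diag2}) to show that ${}^{b}i_{*}(\widetilde{\mathbb{L}}_{p})\subset\mathbb{L}_{p}$, and then conclude equality by injectivity of ${}^{b}i_{*}$ and a dimension count. Your coordinate sanity check is a nice addition not present in the paper.
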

\begin{proof}
	Fixing some notation, we have anchor maps
	$\tilde{\rho}:{}^{b}TC\rightarrow TC$ and $\rho:{}^{b}TM\rightarrow TM$, and we put $\widetilde{\mathbb{L}}:=\text{Ker}\left(\tilde{\rho}|_{C\cap Z}\right)$ and $\mathbb{L}=\text{Ker}\left(\rho|_{Z}\right)$ as before. If $i:(C,C\cap Z)\hookrightarrow (M,Z)$ denotes the inclusion, then we get a commutative diagram with exact rows, for points $p\in C\cap Z$:
	\begin{equation}\label{diag2}
	\begin{tikzcd}[column sep=large, row sep=large]
	0 \arrow{r}&\mathbb{L}_{p}\arrow[hookrightarrow,r] & {}^{b}T_{p}M\arrow{r}{\rho} & T_{p}Z \arrow{r}& 0\\
	0\arrow{r} &\widetilde{\mathbb{L}}_{p}\arrow[hookrightarrow,r] & {}^{b}T_{p}C\arrow{u}{\left({}^{b}i_{*}\right)_{p}} \arrow{r}{\tilde{\rho}} & T_{p}(C\cap Z)\arrow{u}{\left(\left(i|_{C\cap Z}\right)_{*}\right)_{p}} \arrow{r} &0 \\
	\end{tikzcd}.
	\end{equation}
	\squeezeup

We obtain $\left({}^{b}i_{*}\right)_{p}\left(\widetilde{\mathbb{L}}_{p}\right)=\mathbb{L}_{p}$: the inclusion ``$\subset$'' holds by the above diagram, and the equality follows by dimension reasons since ${\left({}^{b}i_{*}\right)_{p}} $ is injective. In particular, 
$\mathbb{L}_{p}$ is contained in the image of ${\left({}^{b}i_{*}\right)_{p}}$,  as we wanted to show.
\end{proof}

The notions of $b$-map and $b$-submanifold are compatible, as the next lemma shows. %

\begin{lemma}\label{restriction}
Let $f:(M_{1},Z_{1})\rightarrow(M_{2},Z_{2})$ be a $b$-map, and assume that we have $b$-submanifolds $C_{1}\subset(M_{1},Z_{1})$ and $C_{2}\subset(M_{2},Z_{2})$ such that $f(C_{1})\subset C_{2}$. 
	\begin{enumerate}[a)]
		\item  Restricting $f$ gives a $b$-map \[\left.f\right|_{C_{1}}:(C_{1},C_{1}\cap Z_{1})\rightarrow (C_{2},C_{2}\cap Z_{2}).\]
		\item 
		Further, $\left.\left({}^{b}f_{*}\right)\right|_{{}^{b}TC_1}={}^{b}\left(f|_{C_1}\right)_{*}$. 
	\end{enumerate}
\end{lemma}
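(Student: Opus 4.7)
The plan is as follows. For part (a), I would verify directly the two conditions in the definition of a $b$-map for $\left.f\right|_{C_1}$. The preimage identity $\left(\left.f\right|_{C_1}\right)^{-1}(C_2\cap Z_2)=C_1\cap Z_1$ is immediate from the hypothesis $f(C_1)\subset C_2$ together with $f^{-1}(Z_2)=Z_1$. For the transversality of $\left.f\right|_{C_1}$ to $C_2\cap Z_2$ in $C_2$ at a point $p\in C_1\cap Z_1$, the key observation is that $T_{f(p)}(C_2\cap Z_2)=T_{f(p)}C_2\cap T_{f(p)}Z_2$ (which holds because $C_2$ is transverse to $Z_2$), so it suffices to produce $v\in T_pC_1$ with $f_*v\notin T_{f(p)}Z_2$. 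To construct such a $v$, pick $w\in T_pM_1$ with $f_*w\notin T_{f(p)}Z_2$ (using that $f$ is transverse to $Z_2$), and decompose $w=v+u$ with $v\in T_pC_1$ and $u\in T_pZ_1$ (using that $C_1$ is transverse to $Z_1$). Since $f$ is a $b$-map we have $f(Z_1)\subset Z_2$, hence $f_*u\in T_{f(p)}Z_2$, so $f_*v=f_*w-f_*u\notin T_{f(p)}Z_2$, as wanted.

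For part (b), the strategy is to view both $\left.\left({}^{b}f_{*}\right)\right|_{{}^{b}TC_1}$ and ${}^{b}\left(\left.f\right|_{C_1}\right)_{*}$ (post-composed with the canonical inclusion ${}^{b}TC_2\hookrightarrow{}^{b}TM_2$) as smooth vector bundle maps ${}^{b}TC_1\to{}^{b}TM_2$, and to argue that they coincide on the dense open subset $C_1\setminus(C_1\cap Z_1)$, whence they coincide everywhere by continuity. On that open subset, all four anchor maps are isomorphisms with the corresponding ordinary tangent bundles, and under these identifications both maps reduce to the ordinary derivative $\left(\left.f\right|_{C_1}\right)_{*}\colon TC_1\to TM_2$, which takes values in $TC_2$. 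Since $C_1\cap Z_1$ is a proper closed submanifold of $C_1$, the complement $C_1\setminus(C_1\cap Z_1)$ is dense, and smoothness of both bundle maps yields the equality on all of ${}^{b}TC_1$. As a byproduct one learns that $\left.\left({}^{b}f_{*}\right)\right|_{{}^{b}TC_1}$ has image in the closed subbundle ${}^{b}TC_2$, so the desired identity makes sense as an equality of maps ${}^{b}TC_1\to{}^{b}TC_2$.

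The only delicate point is in part (b), where one must see that the restriction of ${}^{b}f_*$ actually takes values in the subbundle ${}^{b}TC_2\subset{}^{b}TM_2$; the density and continuity argument above settles this simultaneously with the equality of bundle maps. An alternative, slightly more algebraic route would invoke the uniqueness characterization of ${}^{b}\left(\left.f\right|_{C_1}\right)_{*}$ as the unique vector bundle map fitting into the anchor square over $\left.f\right|_{C_1}$: one checks that the restricted map $\left.\left({}^{b}f_{*}\right)\right|_{{}^{b}TC_1}$ lands in ${}^{b}TC_2$ (again by the density argument, or by \cref{bder} applied locally) and satisfies the same anchor square by restriction of diagram \eqref{diag}, so uniqueness yields the identity.
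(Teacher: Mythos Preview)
Your proposal is correct. For part (a), your argument is essentially the paper's: both use $C_1\pitchfork Z_1$ to decompose a vector in $T_pM_1$, then use $f(Z_1)\subset Z_2$ to push the $Z_1$-component into $T_{f(p)}Z_2$, and finally invoke $C_2\pitchfork Z_2$ to identify $T_{f(p)}(C_2\cap Z_2)=T_{f(p)}C_2\cap T_{f(p)}Z_2$. You exploit the codimension-one fact to reduce to producing a single vector outside the hyperplane, whereas the paper writes out the full transversality identity for an arbitrary $v\in T_{f(p)}C_2$; these are reformulations of the same computation.

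For part (b) the routes genuinely differ. The paper gives a one-line argument via functoriality of the $b$-derivative: from $f\circ i_1=i_2\circ f|_{C_1}$ one gets ${}^{b}f_*\circ{}^{b}(i_1)_*={}^{b}(i_2)_*\circ{}^{b}(f|_{C_1})_*$, which is exactly the desired identity once ${}^{b}TC_j$ is viewed inside ${}^{b}TM_j$ via ${}^{b}(i_j)_*$. This also automatically shows that the restriction lands in ${}^{b}TC_2$, with no separate density step needed. Your density/continuity argument is perfectly valid and has the virtue of being self-contained (it does not rely on having established functoriality of ${}^{b}(\cdot)_*$ beforehand), but it is longer and requires the auxiliary observation that $C_1\setminus(C_1\cap Z_1)$ is dense. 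Your alternative via the uniqueness characterization in diagram \eqref{diag} is closer in spirit to the paper's proof and would also work.
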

\begin{proof}
	\begin{enumerate}[a)]
		\item We first note that
		\begin{align*}
		(\left.f\right|_{C_{1}})^{-1}\left(C_{2}\cap Z_{2}\right)&=C_{1}\cap f^{-1}\left(C_{2}\cap Z_{2}\right)=C_{1}\cap f^{-1}\left(C_{2}\right)\cap f^{-1}\left(Z_{2}\right)\\
		&=C_{1}\cap f^{-1}\left(C_{2}\right)\cap Z_{1}=C_{1}\cap Z_{1},
		\end{align*}
		since $f$ is a $b$-map and $C_{1}\subset f^{-1}\left(C_{2}\right)$. Next, choosing $p\in C_{1}\cap Z_{1}$, we have to show that
		\begin{equation}\label{transversality}
		\left(f_{*}\right)_{p}\left(T_{p}C_{1}\right)+T_{f(p)}\left(C_{2}\cap Z_{2}\right)=T_{f(p)}C_{2}.
		\end{equation}
		We clearly have the inclusion ``$\subset$''. For the reverse, we choose $v\in T_{f(p)}C_{2}$. By transversality $f\pitchfork Z_{2}$, we know that $\left(f_{*}\right)_{p}\left(T_{p}M_{1}\right)+ T_{f(p)}Z_{2}=T_{f(p)}M_{2}$. So we have $v=\left(f_{*}\right)_{p}(x)+y$ for some $x\in T_{p}M_{1}$ and $y\in T_{f(p)}Z_{2}$. Next, since $C_{1}\pitchfork Z_{1}$, we have $T_{p}C_{1}+T_{p}Z_{1}=T_{p}M_{1}$ so that $x=x_{1}+x_{2}$ for some $x_{1}\in T_{p}C_{1}$ and $x_{2}\in T_{p}Z_{1}$. So we have
		\begin{equation}\label{sum}
		v=\left(f_{*}\right)_{p}(x_{1})+\left[\left(f_{*}\right)_{p}(x_{2})+y\right].
		\end{equation}
		The term in square brackets clearly lies in $T_{f(p)}Z_{2}$, and being equal to 
			$v-\left(f_{*}\right)_{p}(x_{1})$ it also lies in $T_{f(p)}C_{2}$.  So it lies
			in $T_{f(p)}\left(C_{2}\cap Z_{2}\right)$, using the transversality $C_2\pitchfork Z_{2}$.
		Hence the decomposition \eqref{sum} is as required in \eqref{transversality}.
		\item Denoting the inclusions $i_{1}:(C_1,C_1\cap Z_{1})\hookrightarrow (M_{1},Z_{1})$ and $i_{2}:(C_2,C_2\cap Z_{2})\hookrightarrow (M_{2},Z_{2})$, we have $f\circ i_{1}=i_{2}\circ f|_{C_1}$. Hence by functoriality, ${}^{b}f_{*}\circ{}^{b}(i_{1})_{*}={}^{b}(i_{2})_{*}\circ{}^{b}\left(f|_{C_1}\right)_{*}$, which implies the claim. 
	\end{enumerate}
\end{proof}

\subsection{Distributions on $b$-manifolds}
\leavevmode
\vspace{0.15cm}

We saw that the short exact sequence \eqref{eq:sesL} does not split canonically. However, its restriction to suitable distributions does split.

\begin{lemma}\label{lem:distrsplit}
Let $(M,Z)$ be a $b$-manifold with anchor map $\rho:{}^{b}TM\rightarrow TM$.
\begin{enumerate}[a)]
\item Given a distribution $D$ on $M$ that is tangent to $Z$,  there exists a canonical splitting $\sigma\colon D\rightarrow{}^{b}TM$ of the anchor $\rho$. 

\item Let $\mathcal{D}$ denote the set of distributions on $M$ tangent to $Z$, and let $\mathcal{S}$ consist of the subbundles of ${}^{b}TM$ intersecting trivially $\ker(\rho)$. Then there is a bijection
\[
\mathcal{D}\rightarrow\mathcal{S}:D\mapsto\sigma(D),
\]
where the splitting $\sigma$ is as in $a)$. The inverse map reads $D'\mapsto\rho(D')$.
\end{enumerate}
\end{lemma}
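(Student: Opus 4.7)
For part (a), my approach is to exploit the fact that on sections the anchor $\rho\colon \Gamma({}^{b}TM)\to \mathfrak{X}(M)$ is a $C^{\infty}(M)$-module isomorphism onto the submodule of vector fields tangent to $Z$ --- this is essentially the defining property of ${}^{b}TM$ built into Serre--Swan. Because $D$ is tangent to $Z$, every section of $D$ is a vector field tangent to $Z$, and hence lifts canonically to a section of ${}^{b}TM$. This yields a $C^{\infty}(M)$-linear map $\Gamma(D)\to \Gamma({}^{b}TM)$, which via Serre--Swan is induced by a smooth vector bundle morphism $\sigma\colon D\to {}^{b}TM$, and $\rho\circ\sigma=\mathrm{id}_{D}$ holds by construction. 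For concreteness I would verify in local coordinates $(x_{1},\dots,x_{n})$ adapted to $Z=\{x_{1}=0\}$ that a local frame $Y_{i}=f_{i1}\,x_{1}\partial_{x_{1}}+\sum_{j\ge 2}f_{ij}\,\partial_{x_{j}}$ of $D$ (where tangency to $Z$ forces the $\partial_{x_{1}}$-coefficient to be divisible by $x_{1}$, by Hadamard's lemma) lifts to $\widetilde{Y}_{i}=f_{i1}\,(x_{1}\partial_{x_{1}})+\sum_{j\ge 2}f_{ij}\,\partial_{x_{j}}$ in the $b$-frame, and that the resulting span is frame-independent because the lift is $C^{\infty}(M)$-linear.

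For part (b) I would first note that $\sigma(D)\cap\ker\rho=0$ is automatic from $\rho\circ\sigma=\mathrm{id}_{D}$, so $D\mapsto\sigma(D)$ really lands in $\mathcal{S}$. Conversely, for $D'\in\mathcal{S}$ the hypothesis $D'\cap\ker\rho=0$ makes $\rho|_{D'}$ a fiberwise injection, hence a vector bundle monomorphism, so $\rho(D')\subset TM$ is a smooth subbundle of the same rank as $D'$. Using the short exact sequence \eqref{eq:sesL}, the image $\rho({}^{b}T_{p}M)$ is contained in $T_{p}Z$ at every $p\in Z$, so $\rho(D')$ is tangent to $Z$, i.e.\ lies in $\mathcal{D}$. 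The two compositions are then easy: $\rho\circ\sigma=\mathrm{id}$ directly gives $\rho(\sigma(D))=D$; for $\sigma(\rho(D'))=D'$, both sides are smooth subbundles of the same rank, and on the open dense set $M\setminus Z$ they coincide (since $\rho$ is an isomorphism there, any subbundle transverse to $\ker\rho$ equals $\rho^{-1}$ of its image). By continuity in the Grassmannian bundle of $r$-planes in ${}^{b}TM$ (which is Hausdorff), the equality extends across $Z$.

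The subtle step I anticipate is the well-definedness of $\sigma$ on $Z$. Pointwise, a lift of $v\in D_{p}\subset T_{p}Z$ through $\rho$ is ambiguous by $\mathbb{L}_{p}$, so one cannot build $\sigma$ vector by vector on $Z$. What rescues the construction is that local sections of $D$ --- not individual vectors --- carry canonical smooth lifts to ${}^{b}TM$: this is exactly the module/Serre--Swan viewpoint. Concretely, under a change of frame $Y'_{i}=\sum_{j}g_{ij}Y_{j}$ the lifts transform in the same tensorial way, $\widetilde{Y}'_{i}=\sum_{j}g_{ij}\widetilde{Y}_{j}$, so the subbundle $\sigma(D)\subset{}^{b}TM$ depends only on $D$ and not on auxiliary choices. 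Once this point is clarified, everything else in the lemma is formal.
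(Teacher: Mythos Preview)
Your proof is correct and, for part~(a), essentially identical to the paper's: both exploit that $\rho$ is a $C^{\infty}(M)$-module isomorphism from $\Gamma({}^{b}TM)$ onto the vector fields tangent to $Z$, so that $\Gamma(D)\hookrightarrow\Gamma({}^{b}TM)$ canonically, and then read off the bundle map $\sigma$. Your added local-coordinate check and emphasis on frame-independence are helpful elaborations of exactly this point.

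For part~(b) there is a small but genuine difference in the verification of $\sigma(\rho(D'))=D'$. The paper argues purely at the level of sections: given $X\in\Gamma(\rho(D'))$, write $X=\rho(Y)$ for the unique $Y\in\Gamma(D')$, observe $\rho(\sigma(X))=X=\rho(Y)$, and conclude $\sigma(X)=Y$ from injectivity of $\rho$ on sections. You instead prove equality on the dense open set $M\setminus Z$ and then invoke continuity of the two sections of the Grassmannian bundle to extend across $Z$. Both arguments are valid; the paper's is slightly more direct (it avoids any topological step and uses only the module-theoretic fact already established), while yours has the virtue of making explicit that the map $D'\mapsto\rho(D')$ really lands in $\mathcal{D}$, a point the paper leaves implicit.
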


\begin{proof}
a) One checks that the inclusion $\Gamma(D)\subset\Gamma\left({}^{b}TM\right)$ induces a well-defined vector bundle map
\[
\sigma:D\rightarrow{}^{b}TM:v\mapsto X_{p},
\]
where $X\in\Gamma(D)$ is any extension of $v\in D_{p}$. This map $\sigma$ satisfies  $\rho\circ\sigma=\text{Id}_{D}$, so in particular $\rho(\sigma(D))=D$.

b) We only have to show that if $D'$ is a subbundle of ${}^{b}TM$ intersecting trivially $\ker(\rho)$, then $\sigma(\rho(D'))=D'$. Denote $D:=\rho(D')$, a distribution  on $M$ tangent to $Z$.
The canonical splitting $\sigma\colon D\rightarrow{}^{b}TM$ is injective, and $D$ and $D'$ have the same rank,
hence it suffices to show that $\sigma(D)\subset D'$. If $X$ is a section of $D$, then $X=\rho(Y)$ for unique $Y\in\Gamma\left(D'\right)$. We get
\[
\rho(\sigma(X))=X=\rho(Y),
\]
and since the anchor $\rho$ is injective on sections, this implies that $\sigma(X)=Y$.
\end{proof}

\begin{cor}\label{split}
Let $f:(M_{1},Z_{1})\rightarrow (M_{2},Z_{2})$ be a $b$-map of constant rank.  Notice that $\text{Ker}(f_{*})$ is a distribution on $M_{1}$ that is tangent to $Z_{1}$.  
It satisfies $$\sigma\left(\text{Ker}(f_{*})\right)=\text{Ker}\left({}^{b}f_{*}\right),$$ where $\sigma:\text{Ker}(f_{*})\rightarrow {}^{b}TM_{1}$ denotes the canonical splitting of the anchor $\rho_{1}$.
\end{cor}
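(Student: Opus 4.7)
My plan is to derive the corollary as a straightforward consequence of Lemma \ref{rank} combined with part b) of Lemma \ref{lem:distrsplit}. The key point is to verify that $\ker({}^b f_*)$ satisfies the hypotheses required to lie in the set $\mathcal{S}$ of Lemma \ref{lem:distrsplit}, and then invoke the bijection there.

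First, I would briefly justify the parenthetical remark that $\ker(f_*)$ is a distribution on $M_1$ tangent to $Z_1$. Constant rank of $f_*$ ensures $\ker(f_*)$ is a subbundle of $TM_1$ of rank $\dim(M_1) - \mathrm{rank}(f_*)$. Tangency to $Z_1$ follows because $f$ is a $b$-map: transversality $f \pitchfork Z_2$ combined with $f^{-1}(Z_2) = Z_1$ forces $T_pZ_1 = (f_*)_p^{-1}(T_{f(p)}Z_2)$ for every $p \in Z_1$, which contains $\ker(f_*)_p$.

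Next, I would check that $\ker({}^b f_*) \in \mathcal{S}$, i.e.\ that it is a subbundle of ${}^{b}TM_1$ intersecting $\ker(\rho_1)$ trivially. By Lemma \ref{rank}, at each point $p \in M_1$ the anchor $(\rho_1)_p$ restricts to an isomorphism from $\ker({}^b f_*)_p$ onto $\ker(f_*)_p$. In particular $\ker({}^b f_*)$ has constant rank equal to that of $\ker(f_*)$, so it is a genuine subbundle; and its intersection with $\ker(\rho_1)$ is trivial.

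Finally, I would apply Lemma \ref{lem:distrsplit}b). The bijection $\mathcal{D} \leftrightarrow \mathcal{S}$ sends $D' \in \mathcal{S}$ to $\rho_1(D') \in \mathcal{D}$, with inverse $D \mapsto \sigma(D)$. Applied to $D' = \ker({}^b f_*)$, the image under $\rho_1$ is exactly $\ker(f_*)$ by Lemma \ref{rank}, and therefore $\sigma(\ker(f_*)) = \ker({}^b f_*)$, which is the claim.

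There is really no hard step here; the only subtle point is recognizing that Lemma \ref{rank} simultaneously delivers (i) the rank constancy needed for $\ker({}^b f_*)$ to be a subbundle and (ii) the triviality of its intersection with $\ker(\rho_1)$, both of which are required in order to invoke the bijection of Lemma \ref{lem:distrsplit}b).
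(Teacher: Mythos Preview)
Your proposal is correct and follows essentially the same approach as the paper's proof: both invoke Lemma~\ref{rank} to conclude that $\ker({}^{b}f_{*})$ corresponds to $\ker(f_{*})$ under the bijection of Lemma~\ref{lem:distrsplit}~b). The paper's proof is a one-liner that leaves the verification of hypotheses implicit, whereas you have spelled out explicitly why $\ker({}^{b}f_{*})$ lies in $\mathcal{S}$ and why $\ker(f_{*})$ lies in $\mathcal{D}$; this extra care is appropriate and does not constitute a different method.
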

\begin{proof}
Under the bijection of Lemma \ref{lem:distrsplit} b), $\text{Ker}(f_{*})$ corresponds to  $\text{Ker}\left({}^{b}f_{*}\right)$, as a consequence of {Lemma \ref{rank}.}
\end{proof}

\subsection{Vector bundles in the $b$-category}
\leavevmode
\vspace{0.15cm}

If $(M,Z)$ is a $b$-manifold and $\pi:E\rightarrow M$ a vector bundle, then $(E,E|_{Z})$ is naturally a $b$-manifold and the projection $\pi:(E,E|_{Z})\rightarrow (M,Z)$ is a $b$-map. Along the zero section $M\subset E$, the $b$-tangent bundle ${}^{b}TE$ splits canonically as follows.

\begin{lemma}\label{decomposition}
	Let $(M,Z)$ be a $b$-manifold and $\pi:E\rightarrow M$ a vector bundle. Then at points $p\in M$ we have a canonical decomposition
	\[
	{}^{b}T_{p}E\cong{}^{b}T_{p}M\oplus E_{p}.
	\]
\end{lemma}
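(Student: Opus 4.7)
The plan is to exhibit the decomposition using the projection and the zero section, both viewed as $b$-maps, and then identify the vertical piece with $E_p$ via the anchor.

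First I would observe that the projection $\pi\colon (E,E|_Z)\to(M,Z)$ is indeed a $b$-map (it is a submersion, hence transverse to $Z$, and $\pi^{-1}(Z)=E|_Z$ by definition), and that the zero section $s\colon(M,Z)\to(E,E|_Z)$ is also a $b$-map: on $Z$ one has $s(T_pM)+T_{s(p)}(E|_Z)=T_pM+T_pZ\oplus E_p=T_{s(p)}E$, and $s^{-1}(E|_Z)=Z$. Therefore both $b$-derivatives ${}^{b}\pi_{*}\colon {}^{b}TE\to{}^{b}TM$ and ${}^{b}s_{*}\colon {}^{b}TM\to{}^{b}TE$ are well-defined.

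Next, from $\pi\circ s=\mathrm{id}_M$ and functoriality of the $b$-derivative one gets ${}^{b}\pi_{*}\circ{}^{b}s_{*}=\mathrm{id}_{{}^{b}TM}$. Hence ${}^{b}s_{*}$ is a canonical right inverse to ${}^{b}\pi_{*}$, yielding at every point $p\in M\subset E$ a canonical splitting
\[
{}^{b}T_{p}E \;\cong\; {}^{b}s_{*}\bigl({}^{b}T_{p}M\bigr)\,\oplus\,\ker\bigl({}^{b}\pi_{*}\bigr)_{p}
\;\cong\;{}^{b}T_{p}M\,\oplus\,\ker\bigl({}^{b}\pi_{*}\bigr)_{p}.
\]

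The remaining point is to identify the second summand with $E_p$. By Lemma \ref{rank}, the anchor $\rho_E\colon{}^{b}TE\to TE$ restricts to an isomorphism
\[
\rho_E\colon \ker\bigl({}^{b}\pi_{*}\bigr)_{p}\;\xrightarrow{\;\cong\;}\;\ker(\pi_{*})_{p}.
\]
Since $p$ lies on the zero section, the ordinary vertical tangent space $\ker(\pi_{*})_{p}$ is canonically identified with $E_{p}$ in the usual way (via the curve $t\mapsto te$ for $e\in E_p$). Composing these two canonical isomorphisms identifies $\ker({}^{b}\pi_{*})_{p}$ with $E_p$, which completes the decomposition.

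No step should present a real obstacle: the only subtlety is checking that $s$ is a $b$-map (so that ${}^{b}s_{*}$ is defined) and invoking Lemma \ref{rank} to pass from the kernel of ${}^{b}\pi_{*}$ to the kernel of $\pi_{*}$; everything else is formal from functoriality and the standard vertical identification on the zero section.
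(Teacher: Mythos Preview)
Your proof is correct and follows essentially the same approach as the paper's: the paper also obtains the splitting of the short exact sequence
\[
0\longrightarrow E \hookrightarrow {}^{b}TE|_{M}\overset{{}^{b}\pi_{*}}{\longrightarrow}{}^{b}TM\longrightarrow 0
\]
via the $b$-derivative of the zero-section inclusion $i\colon(M,Z)\hookrightarrow(E,E|_{Z})$ (your $s$), and identifies the kernel $\ker({}^{b}\pi_{*})$ with the vertical bundle $VE$ via Corollary~\ref{split}, which is exactly the content of Lemma~\ref{rank} that you invoke. The only cosmetic difference is that the paper phrases the construction as a short exact sequence over all of $E$ before restricting to the zero section, whereas you work pointwise along $M$ from the start.
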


\begin{proof}
		Denote by $VE:=\text{Ker}(\pi_{*})$ the vertical bundle.
	By Corollary \ref{split}
	there is a canonical lift $\sigma:VE\hookrightarrow{}^{b}TE$ such that $\sigma(VE)=\text{Ker}({}^{b}\pi_{*})$. So we get a short exact sequence of vector bundles over $E$
	\begin{equation}\label{eq:ses}
	0 \longrightarrow VE \xhookrightarrow{\sigma}{}^{b}TE
	\overset{\widetilde{{}^{b}\pi_{*}}}{\hspace{0.25cm}\longrightarrow}\pi^{*}\left({}^{b}TM\right)\rightarrow 0.
	\end{equation}
	Here
		\begin{equation*}
		\pi^{*}\left({}^{b}TM\right)=\left\{(e,v)\in E\times{}^{b}TM:\pi(e)=pr(v)\right\}
		\end{equation*}
		is the pullback of the vector bundle $pr:{}^{b}TM\rightarrow M$ by $\pi$, and	the surjective vector bundle map
				\[
		\widetilde{{}^{b}\pi_{*}}:{}^{b}TE\rightarrow \pi^{*}\left({}^{b}TM\right),\;(e,v)\mapsto \left(e,\left({}^{b}\pi_{*}\right)_{e}(v)\right)
		\]
		is induced by the $b$-map $\pi:\left(E,E|_{Z}\right)\rightarrow (M,Z)$.

	Restricting \eqref{eq:ses} to the zero section $M\subset E$ gives a short exact sequence of vector bundles over $M$: 
	\[
	0\longrightarrow E \hookrightarrow{}^{b}TE|_{M}\overset{{{}^{b}\pi_{*}}}{\hspace{0.25cm}\longrightarrow}{}^{b}TM\rightarrow 0.
	\]
	This sequence splits canonically through the map ${}^{b}i_{*}:{}^{b}TM\rightarrow {}^{b}TE|_{M}$ induced by the inclusion $i:(M,Z)\hookrightarrow (E,E|_{Z})$. 
\end{proof}

The following result makes use of the decomposition introduced in Lemma
\ref{decomposition}.

\begin{lemma}\label{maps}
	\begin{enumerate}[a)]
		\item Let $\pi:(E,E|_{Z})\rightarrow (M,Z)$ be a vector bundle over the $b$-manifold $(M,Z)$. Denote by $\rho$ and $\widetilde{\rho}$ the anchor maps of ${}^{b}TM$ and ${}^{b}TE$ respectively. Under the decomposition of Lemma \ref{decomposition}, we have that the map
		\[
		\widetilde{\rho}|_{M}:{}^{b}TE|_{M}\cong{}^{b}TM\oplus E\longrightarrow TE|_{M}\cong TM\oplus E
		\]
		equals $\rho\oplus\text{Id}_{E}$.
		
		\item Consider a morphism $\varphi$ of vector bundles over $b$-manifolds covering a $b$-map $f$:
		\begin{equation}\label{eq:VB}
		\begin{tikzcd}[column sep=large, row sep=large]
		(E_{1},E_{1}|_{Z_{1}}) \arrow{d}{\pi_1} \arrow{r}{\varphi}& (E_{2},E_{2}|_{Z_{2}})\arrow{d}{\pi_2}\\
		(M_{1},Z_{1})\arrow{r}{f}&(M_{2},Z_{2})\\
		\end{tikzcd}.
		\squeezeup
		\end{equation}	
		{Then $\varphi$ is a $b$-map,} and its $b$-derivative along the zero section $$ {}^{b}\varphi_{*}|_M\colon
			{}^{b}TE_{1}|_M\cong{}^{b}TM_{1}\oplus E_{1}\to {}^{b}TE_{2}|_M\cong{}^{b}TM_{2}\oplus E_{2}	 $$
			equals ${}^{b}f_{*}\oplus \varphi$.
	\end{enumerate}
\end{lemma}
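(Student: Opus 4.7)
My plan for both parts is to verify the claimed identities on each of the two summands of the decomposition ${}^{b}TE|_M \cong {}^{b}TM \oplus E$ of Lemma \ref{decomposition}, using that these summands are the images of ${}^{b}i_*\colon {}^{b}TM \to {}^{b}TE|_M$ and of the canonical splitting $\sigma\colon VE|_M \cong E \to {}^{b}TE|_M$ from Lemma \ref{lem:distrsplit}.

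For (a), applying the defining diagram \eqref{diag} to the zero section inclusion $i\colon (M,Z) \hookrightarrow (E, E|_Z)$ gives $\widetilde{\rho} \circ {}^{b}i_* = i_* \circ \rho$; since $i_*$ identifies $TM$ with the horizontal summand of $TE|_M \cong TM \oplus E$, this reproduces $\rho$ into the first summand. On the other summand, Lemma \ref{lem:distrsplit}(a) gives $\widetilde{\rho} \circ \sigma = \mathrm{Id}_{VE}$, and under the canonical identification $VE|_M \cong E$ this is $\mathrm{Id}_E$ into the vertical summand of $TE|_M$. Combining yields $\widetilde{\rho}|_M = \rho \oplus \mathrm{Id}_E$.

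For (b), I first check that $\varphi$ is a $b$-map. The preimage condition follows from $\pi_2 \circ \varphi = f \circ \pi_1$, yielding $\varphi^{-1}(E_2|_{Z_2}) = \pi_1^{-1}(f^{-1}(Z_2)) = \pi_1^{-1}(Z_1) = E_1|_{Z_1}$. Transversality $\varphi \pitchfork E_2|_{Z_2}$ is equivalent, after projecting by $(\pi_2)_*$, to $(\pi_2)_*\varphi_*(TE_1) + TZ_2 = TM_2$; using $(\pi_2)_* \circ \varphi_* = f_* \circ (\pi_1)_*$ and surjectivity of $(\pi_1)_*$, this reduces to $f \pitchfork Z_2$, which holds. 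Next, to identify ${}^{b}\varphi_*|_M$, on the ${}^{b}TM_1$-summand the identity $\varphi \circ i_1 = i_2 \circ f$ together with functoriality of ${}^{b}(-)_*$ gives ${}^{b}\varphi_* \circ {}^{b}i_{1,*} = {}^{b}i_{2,*} \circ {}^{b}f_*$, which reads ${}^{b}f_*$ into the ${}^{b}TM_2$-summand.

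The remaining, and I expect most delicate, step is proving the identity ${}^{b}\varphi_* \circ \sigma_1 = \sigma_2 \circ \varphi$ on the $E_1$-summand. The obstacle is that the characterizing diagram for ${}^{b}\varphi_*$ involves $\widetilde{\rho}_2$, which fails to be injective over $Z_2$, so the diagram alone does not force the equality. My plan is to verify the identity pointwise on the dense open subset $M_1 \setminus Z_1$, where $\widetilde{\rho}_1$ is an isomorphism and, since $f^{-1}(Z_2) = Z_1$, the target point $\varphi(p)$ lies in $M_2 \setminus Z_2$ so that $\widetilde{\rho}_2$ is injective there as well. For $v \in E_1$ the image $\widetilde{\rho}_2({}^{b}\varphi_*(\sigma_1(v))) = \varphi_*(\widetilde{\rho}_1(\sigma_1(v))) = \varphi_*(v)$ equals $\varphi(v)$, because for a vector bundle morphism $\varphi$, the derivative $\varphi_*$ restricted to $VE_1|_M \cong E_1$ coincides with $\varphi$ itself; on the other side, $\widetilde{\rho}_2(\sigma_2(\varphi(v))) = \varphi(v)$ by the defining property of $\sigma_2$. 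Injectivity of $\widetilde{\rho}_2$ at $\varphi(p)$ then forces the two to agree on $M_1 \setminus Z_1$, and the full equality extends by continuity, since both ${}^{b}\varphi_* \circ \sigma_1$ and $\sigma_2 \circ \varphi$ are smooth vector bundle maps covering $\varphi|_{M_1}$.
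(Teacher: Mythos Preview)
Your proof is correct. Part (a) and the ${}^{b}TM_1$-summand in part (b) match the paper's argument essentially verbatim (the paper cites Lemma~\ref{restriction}~b) for the latter, which is precisely your functoriality step). Your explicit verification that $\varphi$ is a $b$-map fills in what the paper leaves as ``routine''.

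The only genuine difference is on the $E_1$-summand in part (b). You verify the identity ${}^{b}\varphi_*\circ\sigma_1=\sigma_2\circ\varphi$ on the dense open set $M_1\setminus Z_1$, where the anchor $\widetilde\rho_2$ is injective, and then extend by continuity. The paper instead gives a direct pointwise argument that works over $Z_1$ as well: applying ${}^{b}(\cdot)_*$ to $\pi_2\circ\varphi=f\circ\pi_1$ and using $(E_1)_p=\ker({}^{b}\pi_1)_*|_p$, one first obtains ${}^{b}\varphi_*\big((E_1)_p\big)\subset\ker({}^{b}\pi_2)_*|_{f(p)}=(E_2)_{f(p)}$. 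Once this is known, part (a) says the anchors restrict to the \emph{identity} on the $E$-summands, so the commuting square $\widetilde\rho_2\circ{}^{b}\varphi_*=\varphi_*\circ\widetilde\rho_1$ immediately forces ${}^{b}\varphi_*|_{(E_1)_p}=\varphi_*|_{(E_1)_p}=\varphi|_{(E_1)_p}$, with no density argument needed. In short: the obstacle you flagged (non-injectivity of $\widetilde\rho_2$ over $Z_2$) disappears once you observe that ${}^{b}\varphi_*$ preserves the vertical summand, because $\widetilde\rho_2$ is the identity there. Your route is perfectly valid, just slightly less economical.
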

\begin{proof}
	\begin{enumerate}[a)]
		\item Since $M$ is a $b$-submanifold of $(E,E|_{Z})$, we have that ${}^{b}TM$ is a Lie subalgebroid of ${}^{b}TE$. In particular, $\widetilde{\rho}$ and $\rho$ agree on ${}^{b}TM$. Next, we know that $\widetilde{\rho}$ takes $E\subset{}^{b}TE|_{M}$ isomorphically to $E\subset TE|_{M}$, thanks to Lemma \ref{rank} applied to $\pi$. To see that $\widetilde{\rho}|_{E}=\text{Id}_{E}$, we choose $v\in E_{p}$ and extend it to $V\in\Gamma(VE)$. Denote by $\sigma:VE\hookrightarrow{}^{b}TE$ the canonical splitting of $\widetilde{\rho}$, as in the proof of Lemma \ref{decomposition}. Then $\widetilde{\rho}(v)=\left[\widetilde{\rho}(\sigma(V))\right]_{p}=V_{p}=v$.
		\item {It is routine to check that $\varphi$ is a $b$-map, so we only prove the second statement.}
		Taking the $b$-derivative of both sides of the equality $\pi_2\circ \varphi=f\circ \pi_1$ at a point $p\in M_{1}$, we know that $\left({}^{b}\pi_{2}\right)_{*}\left({}^{b}\varphi_{*}(E_{1})_{p}\right)={}^{b}f_{*}\left(\left({}^{b}\pi_{1}\right)_{*}(E_{1})_{p}\right)=0$, since $\left(E_{1}\right)_{p}=\text{Ker}\left[\left({}^{b}\pi_{1}\right)_{*}\right]_{p}$. Hence ${}^{b}\varphi_{*}(E_{1})_{p}\subset\text{Ker}\left[\left({}^{b}\pi_{2}\right)_{*}\right]_{f(p)}=\left(E_{2}\right)_{f(p)}$ by the proof of Lemma \ref{decomposition}. 
		Using  a) and the diagram \eqref{diag}, we have a commutative diagram 
		\begin{equation}\label{diag3}
		\begin{tikzcd}[column sep=large, row sep=large]
		{}^{b}T_{p}E_{1}\cong{}^{b}T_{p}M_{1}\oplus\left(E_{1}\right)_{p}\arrow{r}{^{b}\varphi_{*}} \arrow{d}{\left(\rho_{1}\oplus\text{Id}\right)} &  {}^{b}T_{f(p)}E_{2}\cong{}^{b}T_{f(p)}M_{2}\oplus\left(E_{2}\right)_{f(p)}\arrow{d}{(\rho_{2}\oplus\text{Id})}\\
		T_{p}E_{1}\cong T_{p}M_{1}\oplus(E_{1})_{p}\arrow{r}{\varphi_{*}}&T_{f(p)}E_{2}\cong T_{f(p)}M_{2}\oplus\left(E_{2}\right)_{f(p)}\\
		\end{tikzcd}.
		\squeezeup
		\end{equation}
		It implies that
		\[
		\left.{}^{b}\varphi_{*}\right|_{(E_{1})_{p}}=\left.\varphi_{*}\right|_{(E_{1})_{p}}=\left.\varphi\right|_{(E_{1})_{p}}.
		\]
		Finally,  $ {}^{b}\varphi_{*}|_{{}^{b}TM_{1}}={}^{b}f_{*}$ holds by Lemma \ref{restriction} b).	\qedhere		
	\end{enumerate}
\end{proof}

\subsection{Log-symplectic and $b$-symplectic structures}\label{subsec:logb}
\leavevmode
\vspace{0.15cm}

The $b$-geometry formalism can be used to describe a certain class of Poisson structures, called log-symplectic structures. These can indeed be regarded as symplectic structures on the $b$-tangent bundle.

\begin{defi}
A \emph{Poisson structure} on a manifold $M$ is a bivector field $\Pi\in\Gamma\left(\wedge^{2}TM\right)$ such that the bracket $\{f,g\}=\Pi(df,dg)$ is a Lie bracket on $C^{\infty}(M)$. Equivalently, the bivector field $\Pi$ must satisfy $[\Pi,\Pi]=0$, where $[\cdot,\cdot]$ is the Schouten-Nijenhuis bracket of multivector fields. 
A smooth map $f:\left(M_{1},\Pi_{1}\right)\rightarrow\left(M_{2},\Pi_{2}\right)$ is a \emph{Poisson map} if the pullback $f^{*}:\left(C^{\infty}(M_{2}),\{\cdot,\cdot\}_{2}\right)\rightarrow\left(C^{\infty}(M_{1}),\{\cdot,\cdot\}_{1}\right)$ is a Lie algebra homomorphism.
\end{defi}

The bivector $\Pi$ induces a bundle map $\Pi^{\sharp}:T^{*}M\rightarrow TM$ by
\[
\left\langle \Pi_{p}^{\sharp}(\alpha),\beta\right\rangle=\Pi_{p}(\alpha,\beta)\hspace{1cm}\forall\alpha,\beta\in T_{p}^{*}M,
\] 
and the \emph{rank} of $\Pi$ at $p\in M$ is defined to be the rank of the linear map $\Pi^{\sharp}_{p}$. Poisson structures of full rank correspond with symplectic structures via $\omega\leftrightarrow-\Pi^{-1}$.

For every $f\in C^{\infty}(M)$, the operator $\{f,\cdot\}$ is a derivation of $C^{\infty}(M)$. The corresponding vector field $X_{f}=\Pi^{\sharp}(df)$ is the \emph{Hamiltonian vector field} of $f$. Any Poisson manifold $(M,\Pi)$ comes with a (singular) distribution $\text{Im}\left(\Pi^{\sharp}\right)$, generated by the Hamiltonian vector fields. This distribution is integrable (in the sense of Stefan-Sussman) and each leaf $\mathcal{O}$ of the associated foliation has an induced symplectic structure $\omega_{\mathcal{O}}:=-\left(\Pi|_{\mathcal{O}}\right)^{-1}$.

\begin{defi}
A Poisson structure $\Pi$ on a manifold $M^{2n}$ is called \emph{log-symplectic} if $\wedge^{n}\Pi$ is transverse to the zero section of the line bundle $\wedge^{2n}TM$.  
\end{defi}

Note that a log-symplectic structure $\Pi$ is of full rank everywhere, except at points lying in the set $Z:=\left(\wedge^{n}\Pi\right)^{-1}(0)$, called the singular locus of $\Pi$.  If $Z$ is nonempty, then it is a smooth hypersurface by the transversality condition, and we call $\Pi$ bona fide log-symplectic. In that case, $Z$ is a Poisson submanifold of $(M,\Pi)$ with an induced Poisson structure that is regular of corank-one. If $Z$ is empty, then $\Pi$ defines a symplectic structure on $M$.

Since log-symplectic structures come with a specified hypersurface, it seems plausible that they have a $b$-geometric interpretation. As it turns out, log-symplectic structures are exactly the symplectic structures of the $b$-category.

\begin{defi}
	A \emph{$b$-symplectic form} on a $b$-manifold $(M^{2n},Z)$ is a ${}^{b}d$-closed and non-degenerate $b$-two-form $\omega\in{}^{b}\Omega^{2}(M)$.
\end{defi}
Here, non-degeneracy means that the bundle map $\omega^{\flat}:{}^{b}TM\rightarrow{}^{b}T^{*}M$ is an isomorphism, or equivalently that $\wedge^{n}\omega$ is a nowhere vanishing element of ${}^{b}\Omega^{2n}(M)$.

\begin{ex}\label{ex:bcotg} \cite[Example 9]{miranda}
	In analogy with the symplectic case, the $b$-cotangent bundle ${}^{b}T^{*}M$ of a $b$-manifold $(M,Z)$ is $b$-symplectic in a canonical way. Note that $\left({}^{b}T^{*}M,\left.{}^{b}T^{*}M\right|_{Z}\right)$ is naturally a $b$-manifold, and that the bundle projection $\pi:\left({}^{b}T^{*}M,\left.{}^{b}T^{*}M\right|_{Z}\right)\rightarrow (M,Z)$ is a $b$-map. The tautological $b$-one-form $\theta\in{}^{b}\Omega^{1}\left({}^{b}T^{*}M\right)$ is defined by
	\[
	\theta_{\xi}(v)=\left\langle \xi,\left({}^{b}\pi_{*}\right)_{\xi}(v)\right\rangle,
	\]
	where $\xi\in{}^{b}T^{*}_{\pi(\xi)}M$ and $v\in {}^{b}T_{\xi}\left({}^{b}T^{*}M\right)$. Its differential $-{}^{b}d\theta$ is a $b$-symplectic form on ${}^{b}T^{*}M$. To see this, choose coordinates $(x_{1},\ldots,x_{n})$ on $M$ adapted to $Z=\{x_{1}=0\}$, and let $(y_{1},\ldots,y_{n})$ denote the fiber coordinates on ${}^{b}T^{*}M$ with respect to the local frame $\left\{\frac{dx_{1}}{x_{1}},dx_{2},\ldots,dx_{n}\right\}$. The tautological $b$-one form is then given by
	\[
	\theta=y_{1}\frac{dx_{1}}{x_{1}}+\sum_{i=2}^{n}y_{i}dx_{i},
	\]
	with exterior derivative 
	\[
	-{}^{b}d\theta=\frac{dx_{1}}{x_{1}}\wedge dy_{1}+\sum_{i=2}^{n}dx_{i}\wedge dy_{i}.
	\]
\end{ex}

A log-symplectic structure on $M$ with singular locus $Z$ is nothing else but a $b$-symplectic structure on the $b$-manifold $(M,Z)$, see \cite[Proposition 20]{miranda}. Indeed, given a $b$-symplectic form $\omega$ on $(M,Z)$,
its negative inverse ${}^{b}\Pi^{\sharp}:={-}\left(\omega^{\flat}\right)^{-1}:{}^{b}T^{*}M\rightarrow{}^{b}TM$  defines a $b$-bivector field ${}^{b}\Pi\in\Gamma\left(\wedge^{2}\left({}^{b}TM\right)\right)$, and applying the anchor map $\rho$ to it yields a bivector field $\Pi:=\rho\left({}^{b}\Pi\right)\in\Gamma\left(\wedge^{2}TM\right)$ that is log-symplectic with singular locus $Z$. Conversely, a log-symplectic structure $\Pi$ on $M$ with singular locus $Z$ lifts uniquely under $\rho$ to a non-degenerate $b$-bivector field ${}^{b}\Pi$, whose negative inverse is a $b$-symplectic form on $(M,Z)$. These processes are summarized in the following diagram:

\begin{equation}\label{lift}
\begin{tikzcd}[column sep=large, row sep=large]
{}^{b}T^{*}M\arrow[r,"{}^{b}\Pi^{\sharp}",swap]  & {}^{b}TM \arrow{d}{\rho}\arrow[l, shift right=1ex,"{-}\omega^{\flat}",swap]\\
T^{*}M\arrow{r}{\Pi^{\sharp}}\arrow{u}{\rho^{*}}&TM\\
\end{tikzcd}.
\end{equation}
\squeezeup

We will switch between the $b$-symplectic and the log-symplectic (i.e. Poisson) viewpoint, depending on which one is the most convenient.

\subsection{A relative $b$-Moser theorem}
\leavevmode
\vspace{0.15cm}

We will need a relative Moser theorem in the $b$-symplectic setting. First, we prove the following $b$-geometric version of the relative Poincar\'e lemma {\cite[Proposition 6.8]{ana}}.

\begin{lemma}\label{poincare}
Let $(M,Z)$ be a $b$-manifold and $C\subset (M,Z)$ a $b$-submanifold. Denote by $i:(C,C\cap Z)\hookrightarrow (M,Z)$ the inclusion. If $\beta\in{}^{b}\Omega^{k}(M)$ is ${}^{b}d$-closed and ${}^{b}i^{*}\beta=0$, then there exist a neighborhood $U$ of $C$ and $\eta\in{}^{b}\Omega^{k-1}(U)$ such that
\[
\begin{cases}
{}^{b}d\eta=\beta|_{U}\\
\eta|_{C}=0
\end{cases}.
\]
\end{lemma}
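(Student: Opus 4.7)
The plan is to mimic the classical proof of the relative Poincaré lemma via a scaling retraction onto $C$, making sure every step survives in the $b$-category.

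The first step is to produce a tubular neighborhood of $C$ in $M$ that is compatible with $Z$. Since $C\pitchfork Z$, one can pick a Riemannian metric on $M$ for which $Z$ is totally geodesic; the normal exponential map of $C$ then yields a diffeomorphism from a neighborhood $V$ of the zero section of the normal bundle $\pi\colon N\to C$ onto a neighborhood $U$ of $C$ in $M$, sending $N|_{C\cap Z}\cap V$ onto $Z\cap U$. This identifies $(U,Z\cap U)$ with the $b$-manifold $(V,N|_{C\cap Z}\cap V)$, so I may replace $(M,Z)$ by $(N,N|_{C\cap Z})$ and $\beta$ by its pullback, and argue directly on $N$.

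Next, I would run the Moser-style scaling homotopy with $\mu_t\colon N\to N,\,v\mapsto tv$. For $t\in(0,1]$ the map $\mu_t$ preserves and is transverse to $N|_{C\cap Z}$, hence is a $b$-diffeomorphism; and $\mu_0=i\circ\pi$ is itself a $b$-map, as one checks directly using $C\pitchfork Z$. The Euler vector field $E$ is vertical, so it is tangent to $N|_{C\cap Z}$, and by Lemma \ref{lem:distrsplit} it lifts canonically to a section of ${}^{b}TN$. Using $^b d\beta=0$ and Cartan's formula in the $b$-category, one then computes
\[
\frac{d}{dt}\,{}^{b}\mu_t^*\beta\;=\;{}^{b}\mu_t^*(\pounds_{E/t}\beta)\;=\;{}^{b}d\bigl({}^{b}\mu_t^*\iota_{E/t}\beta\bigr)\qquad(t\in(0,1]),
\]
so that integration over $[0,1]$ yields the candidate primitive
\[
\eta:=\int_0^1 {}^{b}\mu_t^*\iota_{E/t}\beta\,dt,
\]
with ${}^{b}d\eta=\beta-{}^{b}\mu_0^*\beta=\beta-{}^{b}\pi^*({}^{b}i^*\beta)=\beta$ by the hypothesis. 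The condition $\eta|_C=0$ comes for free, since $\iota_E$ introduces a factor of the fiber coordinates, which vanishes on the zero section.

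The main obstacle is checking that the integrand extends smoothly to $t=0$ in spite of the singularity of $E/t$. I would do this by a short coordinate computation: in $b$-local coordinates $(x_1,\ldots,x_c,y_1,\ldots,y_{m-c})$ on $N$ adapted to $N|_{C\cap Z}=\{x_1=0\}$, one has ${}^{b}\mu_t^*\tfrac{dx_1}{x_1}=\tfrac{dx_1}{x_1}$, ${}^{b}\mu_t^*dx_i=dx_i$ and ${}^{b}\mu_t^*dy_j=t\,dy_j$. Expanding $\beta=\sum_I f_I\omega_I$ in the basic $b$-monomials $\omega_I$, and writing $\ell_I$ for the number of fiber differentials in $\omega_I$, one finds
\[
{}^{b}\mu_t^*(\iota_{E/t}\,\omega_I)=t^{\ell_I-1}\,\iota_E\omega_I.
\]
Monomials with $\ell_I=0$ are killed by $\iota_E$, and the remaining contributions come with a nonnegative power of $t$, so the full integrand is smooth in $t\in[0,1]$. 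Once this smoothness is in hand, the rest reduces to the classical Moser-Poincaré computation carried out inside the $b$-de Rham complex.
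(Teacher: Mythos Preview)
Your approach is essentially the same as the paper's: build a tubular neighborhood of $C$ compatible with $Z$, then apply the fiberwise scaling homotopy to produce a $b$-de Rham homotopy operator. Your treatment of the smoothness of the integrand at $t=0$ is in fact more explicit than the paper's, which simply asserts that the operator is well-defined.

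There is one small imprecision in your tubular-neighborhood step. Having $Z$ totally geodesic is not by itself enough to ensure that the normal exponential map sends $N|_{C\cap Z}$ into $Z$: for a geodesic starting at $p\in C\cap Z$ to remain in $Z$ you need its initial velocity to lie in $T_pZ$, i.e.\ you need the Riemannian normal space $T_pC^{\perp}$ to sit inside $T_pZ$. Transversality $C\pitchfork Z$ guarantees that \emph{some} complement to $T_pC$ lies in $T_pZ$, but not that the orthogonal one does. The paper fixes this by first choosing a complement $V$ to $TC$ in $TM|_C$ with $V_p\subset T_pZ$ for all $p\in C\cap Z$, and only then picking a metric making $Z$ totally geodesic and applying the exponential map to $V$. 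With this small adjustment (or by additionally requiring the metric to satisfy $T_pC^{\perp}\subset T_pZ$ along $C\cap Z$, which is easy to arrange), your argument goes through.
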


\begin{proof} We adapt the proof of {\cite[Proposition 6.8]{ana}}.
We first choose a suitable tubular neighborhood of $C$ that is compatible with the hypersurface $Z$. Due to transversality $C\pitchfork Z$, we can pick a complement $V$ to $TC$ in $TM|_{C}$ such that $V_{p}\subset T_{p}Z$ for all $p\in C\cap Z$. Fix a Riemannian metric $g$ for which $Z\subset (M,g)$ is totally geodesic (e.g. \cite[Lemma 6.8]{Milnor}).
The associated exponential map then establishes a $b$-diffeomorphism between a neighborhood of $C$ in $(V,V|_{C\cap Z})$ and a neighborhood of $C$ in $(M,Z)$.

So we may work instead on the total space of $\pi:(V,V|_{C\cap Z})\rightarrow (C,C\cap Z)$. Consider the retraction of $V$ onto $C$ given by ${r}:V\times[0,1]\rightarrow V:(p,v,t)\mapsto (p,tv)$, and notice that the ${r}_{t}$ are $b$-maps. The associated time-dependent vector field $X_{t}$ is given by $X_{t}(p,v)={\frac{1}{t}v}$, which is a $b$-vector field that vanishes along $C$. It follows that we get a well-defined $b$-de Rham homotopy operator
\[
I:{}^{b}\Omega^{k}(V)\rightarrow{}^{b}\Omega^{k-1}(V):\alpha\mapsto\int_{0}^{1}{}^{b}{r}_{t}^{*}(\iota_{X_{t}}\alpha)dt,
\]
which satisfies
\begin{equation}\label{homotopy}
{}^{b}{r}_{1}^{*}\alpha-{}^{b}{r}_{0}^{*}\alpha={}^{b}d I(\alpha)+I({}^{b}d\alpha).
\end{equation}
Since ${r}_{1}=\text{Id}$ and ${r}_{0}=i\circ\pi$, the formula \eqref{homotopy} gives $\beta={}^{b}d I(\beta)$. Now set $\eta:=I(\beta)$.
\end{proof}

\begin{prop}[Relative $b$-Moser theorem]
\label{moser}
Let $(M,Z)$ be a $b$-manifold and $C\subset(M,Z)$ a $b$-submanifold. If $\omega_{0}$ and $\omega_{1}$ are $b$-symplectic forms on $(M,Z)$ such that $\omega_{0}|_{C}=\omega_{1}|_{C}$, then there exists a $b$-diffeomorphism $\varphi$ between neighborhoods of $C$ such that
$\varphi|_{C}=\text{Id}$ and ${}^{b}\varphi^{*}\omega_{1}=\omega_{0}$.
\end{prop}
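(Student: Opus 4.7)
The plan is to run the standard Moser trick, carefully in the $b$-category so that everything stays in ${}^{b}\Omega^{\bullet}$ and the resulting isotopy consists of $b$-diffeomorphisms.

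First, I would form the linear interpolation $\omega_{t} := (1-t)\omega_{0} + t\omega_{1}$ for $t\in[0,1]$. Since non-degeneracy of a $b$-two-form is an open condition on sections of $\wedge^{2}\left({}^{b}T^{*}M\right)$, and since $\omega_{0}$ and $\omega_{1}$ agree on $C$, the form $\omega_{t}$ is pointwise non-degenerate along $C$ for every $t$. Hence, after shrinking to a suitable open neighborhood $U$ of $C$ (using that $[0,1]$ is compact), each $\omega_{t}$ is a $b$-symplectic form on $U$.

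Next, I apply the relative $b$-Poincar\'e lemma (Lemma \ref{poincare}) to the $b$-two-form $\beta := \omega_{1}-\omega_{0}$. It is ${}^{b}d$-closed and satisfies ${}^{b}i^{*}\beta=0$, so on a (possibly smaller) neighborhood of $C$ we obtain $\eta\in{}^{b}\Omega^{1}(U)$ with ${}^{b}d\eta = \omega_{1}-\omega_{0}$ and $\eta|_{C}=0$. Using non-degeneracy of $\omega_{t}$ I define the time-dependent $b$-vector field $X_{t}\in{}^{b}\mathfrak{X}(U)$ by the equation
\[
\iota_{X_{t}}\omega_{t} = -\eta.
\]
Crucially, since $\eta$ vanishes along $C$ and $\omega_{t}$ is non-degenerate, $X_{t}$ vanishes along $C$ as well.

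The main (mild) obstacle is ensuring the flow of $X_{t}$ is defined on a neighborhood of $C$ for all $t\in[0,1]$ and consists of $b$-diffeomorphisms fixing $C$ pointwise. The first point follows because $X_{t}|_{C}=0$, so $C$ consists of fixed points and by a standard compactness argument the flow exists on an open neighborhood of $C$ for all $t\in[0,1]$. The second point follows because $X_{t}$ is a genuine $b$-vector field, i.e.\ a section of ${}^{b}TM$; its underlying vector field $\rho(X_{t})$ is tangent to $Z$, so the time-$t$ flow $\varphi_{t}$ preserves $Z$ and is a $b$-diffeomorphism in the sense of the excerpt.

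Finally, I verify ${}^{b}\varphi_{1}^{*}\omega_{1}=\omega_{0}$ by the usual computation. Using Cartan's formula (which is available in the $b$-category as recalled after Lemma \ref{restriction}) and the fact that $\omega_{t}$ is ${}^{b}d$-closed,
\[
\frac{d}{dt}\,{}^{b}\varphi_{t}^{*}\omega_{t} \;=\; {}^{b}\varphi_{t}^{*}\!\left(\pounds_{X_{t}}\omega_{t} + \tfrac{d}{dt}\omega_{t}\right) \;=\; {}^{b}\varphi_{t}^{*}\!\left({}^{b}d\,\iota_{X_{t}}\omega_{t} + (\omega_{1}-\omega_{0})\right) \;=\; {}^{b}\varphi_{t}^{*}\!\left(-{}^{b}d\eta + {}^{b}d\eta\right) \;=\; 0.
\]
Integrating from $0$ to $1$ gives ${}^{b}\varphi_{1}^{*}\omega_{1}=\omega_{0}$, and since $X_{t}|_{C}=0$ we have $\varphi_{1}|_{C}=\mathrm{Id}$. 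Setting $\varphi:=\varphi_{1}$ completes the argument.
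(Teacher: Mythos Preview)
Your proof is correct and follows essentially the same approach as the paper: both interpolate linearly between $\omega_0$ and $\omega_1$, invoke the relative $b$-Poincar\'e lemma (Lemma~\ref{poincare}) to produce a primitive $\eta$ vanishing along $C$, solve $\iota_{X_t}\omega_t=-\eta$ in ${}^{b}\mathfrak{X}(U)$, and integrate the resulting $b$-vector fields to a $b$-isotopy fixing $C$. Your write-up is in fact slightly more explicit than the paper's (you spell out the Moser computation and why the flow consists of $b$-diffeomorphisms), but the argument is the same.
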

\begin{proof}
Consider the convex combination $\omega_{t}:=\omega_{0}+t(\omega_{1}-\omega_{0})$ for $t\in[0,1]$. There exists a neighborhood $U$ of $C$ such that $\omega_{t}$ is non-degenerate on $U$ for all $t\in[0,1]$. Shrinking $U$ if necessary, Lemma \ref{poincare} yields $\eta\in{}^{b}\Omega^{1}(U)$ such that $\omega_{1}-\omega_{0}={}^{b}d\eta$ and $\eta|_{C}=0$. As in the usual Moser trick, it now suffices to solve the equation
\[
\iota_{X_{t}}\omega_{t}+\eta=0
\]
for $X_{t}\in{}^{b}\mathfrak{X}(U)$, which is possible by non-degeneracy of $\omega_{t}$. The $b$-vector fields $X_{t}$ thus obtained vanish along $C$ since $\eta|_{C}=0$. Further shrinking $U$ if necessary, we can integrate the $X_{t}$ to an isotopy $\{{\phi_{t}}\}_{t\in[0,1]}$ defined on $U$. Note that the ${\phi_{t}}$ are $b$-diffeomorphisms that restrict to the identity on $C$. By the usual Moser argument, we have ${}^{b}{\phi_{1}}^{*}\omega_{1}=\omega_{0}$, so setting $\varphi:={\phi_{1}}$ finishes the proof.
\end{proof}

\begin{remark}
{We learnt from Ralph Klaasse that the work in progress \cite{KLAlmRegPois} contains a version of Proposition \ref{moser} that holds in the more general setting of symplectic Lie algebroids.}
\end{remark}

\color{black}
\section{$b$-coisotropic submanifolds {and the $b$-Gotay theorem}}\label{sec:bcoiso}

This section is devoted to coisotropic submanifolds of $b$-symplectic manifolds that are transverse to the degeneracy hypersurface. The main result is Theorem \ref{gotay}, a $b$-symplectic  version of Gotay's theorem, which implies a normal form statement around such submanifolds. {This can be used, for instance, to study the deformation theory of $b$-coisotropic submanifolds \cite{GZbdefs}.}

\subsection{$b$-coisotropic submanifolds}\label{bcoiso}
\leavevmode
\vspace{0.15cm}

{In this subsection we introduce $b$-coisotropic submanifolds and we discuss some of their main features.}
First recall the definition of a coisotropic submanifold in Poisson geometry. 

\begin{defi}\label{def:coiso}
Let $(M,\Pi)$ be a Poisson manifold with associated Poisson bracket $\{\cdot,\cdot\}$. A submanifold $C\subset M$ is \emph{coisotropic} if the following equivalent conditions hold:
\begin{enumerate}[a)]
	\item $\Pi^{\sharp}\left(TC^{0}\right)\subset TC$, where $TC^{0}\subset T^{*}M|_{C}$ denotes the annihilator of $TC$.
	\item $\{\mathcal{I}_{C},\mathcal{I}_{C}\}\subset \mathcal{I}_{C}$, where $\mathcal{I}_{C}:=\{f\in C^{\infty}(M): f|_{C}=0\}$ denotes the vanishing ideal of $C$. 
	\item $T_{p}C\cap T_{p}\mathcal{O}$ is a coisotropic subspace of the symplectic vector space $\left(T_{p}\mathcal{O},-\left(\Pi|_{\mathcal{O}}\right)^{-1}_{p}\right)$ for all $p\in C$, where $\mathcal{O}$ denotes the symplectic leaf through $p$.
\end{enumerate}
\end{defi}
The singular distribution $\Pi^{\sharp}\left(TC^{0}\right)$ on $C$ appearing above is called the \emph{characteristic distribution}.
If $\Pi={-}\omega^{-1}$ is symplectic, the coisotropicity condition becomes $TC^{\omega}\subset TC$.

\begin{defi}\label{def:bcoiso}
Let $(M,Z,\omega)$ be a $b$-symplectic manifold, and denote by $\Pi$ the corresponding Poisson bivector field on $M$.
A submanifold $C$ of $M$ is called \emph{$b$-coisotropic} if it is coisotropic with respect to $\Pi$ and {a $b$-submanifold (i.e. transverse to $Z$)}.
\end{defi}

\begin{remark}
A $b$-coisotropic submanifold $C^{n}\subset (M^{2n},Z,\Pi)$ of middle dimension is necessarily Lagrangian, i.e. $T_{p}C\cap T_{p}\mathcal{O}$ is a Lagrangian subspace of the symplectic vector space $\left(T_{p}\mathcal{O},-\left(\Pi|_{\mathcal{O}}\right)^{-1}_{p}\right)$ for all $p\in C$, where $\mathcal{O}$ denotes the symplectic leaf through $p$. {Indeed, at points away from $Z$ there is nothing to prove. At points $p\in C\cap Z$, we have \[
\dim\left(T_{p}C\cap T_{p}\mathcal{O}\right)\leq \dim\left(T_{p}C\cap T_{p}Z\right)=n-1,
\]
where the last equality follows from transversality $C\pitchfork Z$. On the other hand, $T_{p}C\cap T_{p}\mathcal{O}$ is at least $(n-1)$-dimensional, being a coisotropic subspace of the $(2n-2)$-dimensional symplectic vector space $T_{p}\mathcal{O}$. Hence $\dim(T_{p}C\cap T_{p}\mathcal{O})=n-1$, which proves the claim.
}
\end{remark}

Definition \ref{def:bcoiso} can be rephrased in terms of the $b$-symplectic form $\omega$:
a $b$-coisotropic submanifold is precisely a 
$b$-submanifold $C$ such that $({}^{b}TC)^\omega\subset{}^{b}TC$. 
\begin{prop}\label{distribution}
Let $C$ be a $b$-submanifold of a $b$-symplectic manifold $(M,Z,\omega)$. Then $C$ is  coisotropic if and only if
$({}^{b}TC)^\omega\subset{}^{b}TC$.
\end{prop}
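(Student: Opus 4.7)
My plan is to reduce the statement to a clean pointwise linear algebra computation by translating the coisotropic condition (phrased with the Poisson bivector $\Pi$ on $TM$) into a condition on the $b$-symplectic form $\omega$ via the lifting diagram \eqref{lift}.

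First, I would prove the auxiliary identity
\[
\rho^{*}(TC^{0})=({}^{b}TC)^{0}
\]
as sub-bundles of ${}^{b}T^{*}M|_{C}$. The inclusion ``$\subset$'' is immediate from $\rho({}^{b}TC)\subset TC$. Since $\rho^{*}$ is fiberwise injective and both sides have rank equal to $\mathrm{codim}(C)$, equality follows.

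Second, using $\Pi^{\sharp}=\rho\circ{}^{b}\Pi^{\sharp}\circ\rho^{*}$ together with the standard identity ${}^{b}\Pi^{\sharp}(({}^{b}TC)^{0})=({}^{b}TC)^{\omega}$ (which follows from ${}^{b}\Pi^{\sharp}=-(\omega^{\flat})^{-1}$ and the definition of symplectic orthogonal), I obtain
\[
\Pi^{\sharp}(TC^{0})=\rho\bigl(({}^{b}TC)^{\omega}\bigr).
\]
Hence $C$ is coisotropic iff $\rho(({}^{b}TC)^{\omega})\subset TC$. The implication $({}^{b}TC)^{\omega}\subset{}^{b}TC\Rightarrow$ coisotropic is then immediate, because $\rho({}^{b}TC)\subset TC$.

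The only real work is the converse implication, at points $p\in C\cap Z$ (for $p\in C\setminus Z$ the anchor $\rho$ is an isomorphism, so the result follows trivially from the symplectic case). Let $v\in ({}^{b}T_{p}C)^{\omega}$. Since $p\in Z$, we have $\rho(v)\in T_{p}Z$, and coisotropy gives $\rho(v)\in T_{p}C$, so $\rho(v)\in T_{p}(C\cap Z)$. The short exact sequence in diagram \eqref{diag2} shows that $\rho\colon{}^{b}T_{p}C\to T_{p}(C\cap Z)$ is surjective, so I can pick $u\in{}^{b}T_{p}C$ with $\rho(u)=\rho(v)$. Then $v-u\in\ker(\rho_{p})=\mathbb{L}_{p}$, and by Lemma \ref{L} we have $\mathbb{L}_{p}\subset{}^{b}T_{p}C$; hence $v=u+(v-u)\in{}^{b}T_{p}C$, as required.

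The step requiring the most attention is this last one, where the fact that $\rho$ fails to be injective on $Z$ forces us to invoke Lemma \ref{L} (the canonical inclusion $\mathbb{L}_{p}\subset{}^{b}T_{p}C$ for $b$-submanifolds) to absorb the kernel $\mathbb{L}_{p}$ back into ${}^{b}T_{p}C$. Everything else is routine bookkeeping with the diagram \eqref{lift}.
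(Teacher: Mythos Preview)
Your proof is correct (modulo one slip, below) and takes a genuinely different route from the paper. The paper argues by density: on the open dense set $C\setminus(C\cap Z)$ the anchor $\rho$ is an isomorphism, so the two conditions coincide there; the inclusion $({}^{b}TC)^{\omega}\subset{}^{b}TC$ then extends to all of $C$ by continuity (both sides are constant-rank subbundles), and for the converse one passes from coisotropicity on the dense part to coisotropicity on $C$ via the ideal characterization~b) in Definition~\ref{def:coiso}. You instead work pointwise: you first establish the identity $\Pi^{\sharp}(TC^{0})=\rho\bigl(({}^{b}TC)^{\omega}\bigr)$ (which is exactly Lemma~\ref{lem:twochar}, proved in the paper \emph{after} this proposition), and then at points of $C\cap Z$ you absorb the kernel $\mathbb{L}_{p}$ of $\rho$ into ${}^{b}T_{p}C$ via Lemma~\ref{L}. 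Your argument is more explicit and avoids appealing to the closedness of the coisotropic condition; the paper's is shorter but leans on density and the ideal-theoretic characterization.

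One slip: the assertion that ``$\rho^{*}$ is fiberwise injective'' is false at points $p\in Z$, where $\ker(\rho_{p}^{*})=(\operatorname{Im}\rho_{p})^{0}=(T_{p}Z)^{0}\neq 0$. What you actually need---and what suffices for the rank count---is that $\rho_{p}^{*}$ is injective \emph{on} $T_{p}C^{0}$; this follows from
\[
\ker(\rho_{p}^{*})\cap T_{p}C^{0}=(T_{p}Z)^{0}\cap (T_{p}C)^{0}=(T_{p}Z+T_{p}C)^{0}=\{0\}
\]
by the transversality $C\pitchfork Z$. With this correction your auxiliary identity $\rho^{*}(TC^{0})=({}^{b}TC)^{0}$ stands, and the rest of the argument is fine.
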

Notice that the latter condition states that $^bTC$ is a coisotropic subbundle of the symplectic vector bundle $(^bTM|_C, \omega|_{C})$.

\begin{proof}
If $C$ is coisotropic, then at points of $C \cap (M\setminus Z)$ we have that
$TC^\omega\subset TC$, i.e. $({}^{b}TC)^\omega\subset{}^{b}TC$. By continuity, this inclusion of subbundles holds at all points of $C$. Conversely, if this inclusion holds on $C$, it follows that $C \cap (M\setminus Z)$ is coisotropic in $M\setminus Z$, and using characterization b) in Definition \ref{def:coiso} we see that $C$ is coisotropic in $M$.
\end{proof}

We give an alternative description of the characteristic distribution of a $b$-coisotropic submanifold.

\begin{lemma}\label{lem:twochar}
Let $C$ be any $b$-submanifold of a $b$-symplectic manifold $(M,Z,\omega)$, {and let $\rho:{}^{b}TM\rightarrow TM$ denote the anchor of ${}^{b}TM$ so that $\Pi=\rho\left({-}\omega^{-1}\right)$ is the Poisson bivector corresponding with $\omega$.} Then
\begin{equation}\label{distr}
\rho\left(\left({}^{b}TC\right)^{\omega}\right)=\Pi^{\sharp}\left(TC^{0}\right).
\end{equation}
\end{lemma}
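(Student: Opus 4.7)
The plan is to reduce the equality \eqref{distr} to a purely linear-algebraic statement about the interaction of the annihilators $TC^{0}\subset T^{*}M|_{C}$ and $({}^{b}TC)^{0}\subset{}^{b}T^{*}M|_{C}$ via the anchor's dual $\rho^{*}$, and then to plug everything into the commutative diagram \eqref{lift}.

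\textbf{Step 1: the key lemma on annihilators.} First I would establish that for any $b$-submanifold $C\subset (M,Z)$, the map $\rho^{*}\colon T^{*}M|_{C}\to{}^{b}T^{*}M|_{C}$ restricts to an isomorphism
\[
\rho^{*}\colon TC^{0}\xrightarrow{\;\sim\;}({}^{b}TC)^{0}.
\]
The inclusion $\rho^{*}(TC^{0})\subset({}^{b}TC)^{0}$ is immediate: for $\alpha\in TC^{0}$ and $v\in{}^{b}TC$, the commutativity of the anchor with the inclusion ${}^{b}i_{*}$ gives $\rho^{*}(\alpha)(v)=\alpha(\rho(v))=0$, since $\rho(v)\in TC$. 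Both bundles have rank $\mathrm{codim}(C)$, so it suffices to check that $\rho^{*}|_{TC^{0}}$ is pointwise injective. Away from $Z$ this is clear since $\rho$ is an isomorphism there. At a point $p\in C\cap Z$ the kernel of $\rho^{*}_{p}$ equals $(\mathrm{im}\,\rho_{p})^{0}=T_{p}Z^{0}$, so
\[
\ker(\rho^{*}_{p}|_{T_{p}C^{0}})=T_{p}Z^{0}\cap T_{p}C^{0}=(T_{p}Z+T_{p}C)^{0}=(T_{p}M)^{0}=0,
\]
where the penultimate equality is exactly the transversality $C\pitchfork Z$. The main obstacle is precisely this step, since without transversality one cannot identify $\rho^{*}(TC^{0})$ with $({}^{b}TC)^{0}$.

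\textbf{Step 2: translating the symplectic orthogonal.} Non-degeneracy of $\omega$ yields $\omega^{\flat}\bigl(({}^{b}TC)^{\omega}\bigr)=({}^{b}TC)^{0}$. Combining this with Step 1, I obtain
\[
({}^{b}TC)^{\omega}=(\omega^{\flat})^{-1}\!\bigl(({}^{b}TC)^{0}\bigr)=(\omega^{\flat})^{-1}\!\bigl(\rho^{*}(TC^{0})\bigr)=-{}^{b}\Pi^{\sharp}\bigl(\rho^{*}(TC^{0})\bigr),
\]
using ${}^{b}\Pi^{\sharp}=-(\omega^{\flat})^{-1}$.

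\textbf{Step 3: applying the anchor.} Applying $\rho$ to both sides and invoking the commutative diagram \eqref{lift}, which says $\Pi^{\sharp}=\rho\circ{}^{b}\Pi^{\sharp}\circ\rho^{*}$, gives
\[
\rho\bigl(({}^{b}TC)^{\omega}\bigr)=-\rho\bigl({}^{b}\Pi^{\sharp}(\rho^{*}(TC^{0}))\bigr)=-\Pi^{\sharp}(TC^{0})=\Pi^{\sharp}(TC^{0}),
\]
where the last equality is because $TC^{0}$ is a linear subspace. This is the desired identity \eqref{distr}.
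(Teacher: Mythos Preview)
Your proof is correct and follows essentially the same approach as the paper's: both establish the key identity $\rho^{*}(TC^{0})=({}^{b}TC)^{0}$ via the transversality $C\pitchfork Z$ (this is the ``Claim'' in the paper's proof), then translate $({}^{b}TC)^{\omega}$ through the non-degeneracy of $\omega$ and the commutative diagram \eqref{lift}. A minor cosmetic difference is that the paper disposes of points in $C\setminus Z$ at the outset by invoking symplectic linear algebra, whereas you treat all points uniformly; also, you track the sign in ${}^{b}\Pi^{\sharp}=-(\omega^{\flat})^{-1}$ explicitly and absorb it at the end, while the paper glosses over it (harmlessly, since only images of linear subspaces are at stake).
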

\begin{proof}
At points $p\in C\setminus(C\cap Z)$, the equality \eqref{distr} holds by symplectic linear algebra. So let $p\in C\cap Z$. 
Denote by ${}^{b}\Pi:={-}\omega^{-1}\in\Gamma\left(\wedge^{2}\left({}^{b}TM\right)\right)$ the lift of $\Pi$ as a $b$-bivector field. Note that
\begin{equation}\label{note}
\left({}^{b}T_{p}C\right)^{\omega_{p}}=\left(\omega_{p}^{\flat}\right)^{-1}\left(\left({}^{b}T_{p}C\right)^{0}\right)={}^{b}\Pi^{\sharp}\left(\left({}^{b}T_{p}C\right)^{0}\right),
\end{equation}
where the annihilator is taken in ${}^{b}T^{*}_{p}M$. We now assert:

\begin{claim*}
$\left({}^{b}T_{p}C\right)^{0}=\rho_{p}^{*}\left(T_{p} C^{0}\right).$
\end{claim*}

\noindent
To prove the claim, we first note that the dimensions of both sides agree since
\[
\text{Ker}(\rho_{p}^{*})\cap T_{p}C^{0}=\text{Im}(\rho_{p})^{0}\cap T_{p}C^{0}=T_{p}Z^{0}\cap T_{p}C^{0}=(T_{p}Z+T_{p}C)^{0}=\{0\},
\]
where the last equality holds by transversality $C\pitchfork Z$. Now it is enough to show that the inclusion ``$\supset$'' holds, which is clearly the case since $\rho_{p}({}^{b}T_{p}C)\subset T_{p}C$.
\hfill $\bigtriangleup$

\vspace{0.3cm}
\noindent
We thus obtain
\[
\rho_{p}\left(\left({}^{b}T_{p}C\right)^{\omega_{p}}\right)=\left(\rho_{p}\circ{}^{b}\Pi_{p}^{\sharp}\circ\rho_{p}^{*}\right)\left(T_{p}C^{0}\right)
=\Pi_{p}^{\sharp}\left(T_{p}C^{0}\right),
\]
where in the first equality we used \eqref{note} and the claim just proved, and in the second we used the diagram \eqref{lift}.
\end{proof}

A general fact  in Poisson geometry is that the conormal bundle of any coisotropic submanifold is a Lie subalgebroid of the cotangent Lie algebroid. We now show that 
the $b$-geometry version of this fact holds for
 $b$-coisotropic submanifolds.

\begin{prop} 
Let $(M,Z,\omega)$ be a $b$-symplectic manifold with corresponding Poisson bivector field $\Pi$. Recall that ${}^{b}T^{*}M$ is a Lie algebroid (endowed with the Lie bracket induced by  ${}^{b}\Pi$), fitting in the diagram of Lie algebroids \eqref{lift}. Let $C$ be a $b$-coisotropic submanifold.
\begin{itemize}
\item[a)]   $({}^{b}TC)^{\circ}$ is a Lie subalgebroid of   ${}^{b}T^{*}M$.
\item[b)] {$({}^{b}TC)^{\circ}$} fits in the following diagram of Lie subalgebroids of the diagram  \eqref{lift}: 
\begin{equation}\label{liftC}
\begin{tikzcd}[column sep=large, row sep=large]
({}^{b}TC)^{\circ}\arrow[r,"{}^{b}\Pi^{\sharp}",swap,"\simeq"',yshift = -2pt] & ({}^{b}TC)^{\omega} \arrow{d}{\rho}\arrow[l, shift right=1ex,"{-}\omega^{\flat}",swap,yshift=2pt]\\
TC^{\circ}\arrow{r}{\Pi^{\sharp}}\arrow[u,"\rho^{*}", "\simeq"'{rotate=90,yshift=-5pt,xshift=-6pt}]&TC
\end{tikzcd}.
\end{equation}
\end{itemize}
\end{prop}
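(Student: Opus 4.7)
For part a), my approach is to use Proposition \ref{distribution} to recast $b$-coisotropicity as $({}^{b}TC)^{\omega}\subset{}^{b}TC$, so that $^bTC$ is a coisotropic subbundle of the symplectic Lie algebroid $({}^{b}TM,\omega)$. I will then run the standard two-step argument, transferred to the $b$-setting. First, I plan to show that $({}^{b}TC)^{\omega}$ itself is a Lie subalgebroid of $^bTM$: the anchor condition $\rho\bigl(({}^{b}TC)^{\omega}\bigr)\subset TC$ is furnished by Lemma \ref{lem:twochar} combined with Poisson coisotropy, and involutivity follows from the Cartan-formula argument in the $b$-de Rham complex: for local sections $X,Y$ of $^{b}TM$ extending sections of $({}^{b}TC)^{\omega}$ and any $Z\in\Gamma({}^{b}TC)$, the equation $({}^{b}d\omega)(X,Y,Z)=0$ combined with the inclusion $({}^{b}TC)^{\omega}\subset{}^{b}TC$ forces $\omega([X,Y],Z)|_{C}=0$, so $[X,Y]$ restricts to a section of $({}^{b}TC)^{\omega}$. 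Second, I transfer this structure via the bundle map $\omega^{\flat}\colon{}^{b}TM\to{}^{b}T^{*}M$, which is a Lie algebroid isomorphism because $\omega$ is ${}^{b}d$-closed and its inverse, up to sign, is ${}^{b}\Pi^{\sharp}$: by definition of symplectic orthogonal, $\omega^{\flat}\bigl(({}^{b}TC)^{\omega}\bigr)=({}^{b}TC)^{\circ}$, so the subalgebroid structure is carried bijectively onto $({}^{b}TC)^{\circ}$, proving part a).

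For part b), each of the four corners of \eqref{liftC} is a Lie subalgebroid: the top two by part a) and the intermediate step above, $TC^{\circ}$ by the classical Poisson-geometric fact applied to the coisotropic submanifold $C\subset(M,\Pi)$, and $TC$ trivially. The top-row isomorphism is precisely the restriction of ${}^{b}\Pi^{\sharp}$ (inverse to $\omega^{\flat}$). For the left vertical map $\rho^{*}\colon TC^{\circ}\to({}^{b}TC)^{\circ}$, bijectivity at points of $C\setminus Z$ is immediate since $\rho$ is an isomorphism there, while at points of $C\cap Z$ it is exactly the Claim proved inside Lemma \ref{lem:twochar}. The right vertical arrow has image $\Pi^{\sharp}(TC^{\circ})\subset TC$ again by Lemma \ref{lem:twochar} together with coisotropy, and the bottom arrow is the restriction of $\Pi^{\sharp}$, well-defined for the same reason. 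Commutativity of \eqref{liftC} is inherited from \eqref{lift}, and each arrow in it is a Lie algebroid morphism as a restriction of one.

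The main subtlety arises at points of $C\cap Z$, where $\rho$ drops rank and one must verify directly that $({}^{b}TC)^{\circ}=\rho^{*}(TC^{\circ})$ rather than a proper subspace. This is precisely the content of the Claim inside the proof of Lemma \ref{lem:twochar}, which exploits transversality $C\pitchfork Z$ via the dimension count $\mathrm{Ker}(\rho^{*})\cap TC^{\circ}=\{0\}$; modulo this identification, the remaining verifications reduce to symplectic-Lie-algebroid linear algebra combined with ${}^{b}d\omega=0$.
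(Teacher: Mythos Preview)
Your proposal is correct and follows essentially the same route as the paper: both reduce part a) to showing that $({}^{b}TC)^{\omega}$ is a Lie subalgebroid of ${}^{b}TM$ via the Cartan calculus argument using ${}^{b}d\omega=0$, and then transport this through the Lie algebroid isomorphism $-\omega^{\flat}={}^{b}\Pi^{\sharp}$; for part b) both invoke the Claim in Lemma~\ref{lem:twochar} (to identify $\rho^{*}(TC^{\circ})$ with $({}^{b}TC)^{\circ}$) and equation~\eqref{note}, together with the classical fact that $TC^{\circ}$ is a Lie subalgebroid. The only cosmetic difference is that the paper phrases the involutivity step as ``$({}^{b}TC)^{\omega}=\ker({}^{b}i^{*}\omega)$ is the kernel of a closed $b$-two-form on $C$'', which makes the Cartan computation take place entirely on ${}^{b}TC$ and avoids the need to extend sections to ${}^{b}TM$; your extension argument works but requires a word of care that the terms $X\omega(Y,Z)$, etc., vanish along $C$ because $\rho(X)|_{C}$ is tangent to $C$.
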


\begin{proof}
Diagram \eqref{liftC} is a diagram of vector subbundles of diagram \eqref{lift}, by the claim in the proof of Lemma \ref{lem:twochar} and by equation \eqref{note}.

For a), since the   morphism ${}^{b}\Pi^{\sharp}$ in diagram \eqref{lift} is an isomorphism of Lie algebroids, it suffices to show that $({}^{b}TC)^{\omega}$ is a Lie subalgebroid of ${}^{b}TM$. Since $({}^{b}TC)^{\omega}$ is the kernel of the closed $b$-2-form ${}^{b}i^* \omega$, a standard Cartan calculus computation shows that this is indeed the case. It is well-known that $TC^{\circ}$ and $TC$ are also Lie subalgebroids, proving b).
\end{proof}

\subsection{Examples of $b$-coisotropic submanifolds}
\leavevmode
\vspace{0.15cm}

{We now exhibit some examples of $b$-coisotropic submanifolds. The main result of this subsection is Proposition \ref{prop:coisoBl}, which shows that graphs of suitable Poisson maps between log-symplectic manifolds give rise to $b$-coisotropic submanifolds, once lifted to a certain blow-up.}

\begin{exs}\label{ex:sympllog}
\begin{enumerate}[a)]
	\item Given a log-symplectic manifold $(M,Z,\Pi)$, any hypersurface  of $M$ transverse to $Z$ is $b$-coisotropic.
	\item Let $(M,\Omega)$ be a symplectic manifold, whose non-degenerate Poisson structure we denote $\Pi_M:=-\Omega^{-1}$, and let $(N,\Pi_{N})$ be a log-symplectic manifold with singular locus $Z$. Then $(M\times N, \Pi_M-\Pi_{N})$ is log-symplectic with singular locus $M\times Z$. Given a Poisson map $\phi:(M, \Pi_M)\rightarrow (N,\Pi_{N})$ transverse to $Z$, we have that $\text{Graph}(\phi)\subset(M\times N,  \Pi_M-\Pi_{N})$ is $b$-coisotropic. {As a concrete example, consider for instance 
\[
\phi:\left(\mathbb{R}^{4},\sum_{i=1}^{2}\pd{x_{i}}\wedge \pd{y_{i}}\right)\rightarrow\left(\mathbb{R}^{2},x\partial_{x}\wedge\partial_{y}\right):
,\;(x_{1},y_{1},x_{2},y_{2})\mapsto (y_{1},x_{2}-x_{1}y_{1}).
\]	
}
\end{enumerate}
\end{exs}


We will now prove Proposition \ref{prop:coisoBl}. We start recalling some facts from  \cite[\S 2.1]{GuLi}. Given a manifold $M$ and a closed submanifold $L$ of codimension $\ge 2$, one can construct a new manifold by replacing $L$ with the projectivization of its normal bundle. The resulting manifold $Bl_L(M)$,
 the \emph{real projective blow-up} of $M$ along $L$,  comes with a map $$p\colon Bl_L(M)\to M$$ which restricts to a diffeomorphism $Bl_L(M)\setminus p^{-1}(L)\to M\setminus L$. Further, let $S\subset M$ be a submanifold which intersects cleanly $L$, i.e. $S\cap L$ is a submanifold with $T(S\cap L)=TS\cap TL$. Then $S$ can be ``lifted'' to a submanifold of $Bl_L(M)$, namely the closure  of the inverse image of ${S}\setminus L$ under $p$:
$$\overline{\overline{S}}:=\overline{p^{-1}({S}\setminus L)}.$$

Now let $(M_i,Z_i,\Pi_i)$ be {log-symplectic} manifolds, for $i=1,2$.
The product $M_1\times M_2$ is not {log-symplectic} in general\footnote{
However it fits in a slight generalization of the notion of {log-symplectic} structure used in this note: indeed $(M_1\times Z_2) \cup (Z_1\times M_2)$ is a normal crossing divisor, and vector fields tangent to it give rise to a Lie algebroid to which the   Poisson structure on $M_1\times {\hat{M}_2}$ lifts in a non-degenerate way (we thank Aldo Witte for pointing this out to us). One can check that if $f\colon M_1\to M_2$ is a Poisson map transverse to $Z_2$, then $graph(f)$ intersects transversely both $M_1\times Z_2$ and  $Z_1\times M_2$. {This statement generalizes 
Example \ref{ex:sympllog} b) and
can be viewed as an analogue of Proposition \ref{prop:coisoBl}.}}
, but \cite{Polishchuk}, \cite[\S 2.2]{GuLi} 
\begin{equation}\label{blow}
X:=Bl_{Z_1\times Z_2}(M_1\times M_2)\setminus(
\overline{\overline{M_1\times Z_2}} \cup\overline{\overline{Z_1\times M_2}})
\end{equation}
is {log-symplectic} with singular locus the exceptional divisor $p^{-1}(Z_1\times Z_2)$, and the blow-down map $p\colon X\to M_1\times \hat{M_2}$ is Poisson, {where $\hat{M}_2$ denotes
$(M_2,-\Pi_2)$}.

\begin{prop}\label{prop:coisoBl}
Let $f\colon (M_1,Z_1,\Pi_1)\to (M_2,Z_2,\Pi_2)$ be a Poisson map with $f(Z_1)\subset Z_2$. Then  $$\overline{\overline{graph(f)}}$$ 
is a $b$-coisotropic submanifold of {the log-symplectic manifold $X$ defined in \eqref{blow}.}
\end{prop}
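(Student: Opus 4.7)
The plan is to exhibit $\overline{\overline{graph(f)}}$ as the image of an explicit $b$-embedding $M_{1}\hookrightarrow X$, and then to obtain coisotropicity by a density argument.

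First I would construct the embedding. The map $\psi\colon M_{1}\to M_{1}\times M_{2},\;x\mapsto(x,f(x))$ lifts uniquely through the blow-down $p\colon X\to M_{1}\times\hat{M}_{2}$ on $M_{1}\setminus Z_{1}$ (where $p$ is a diffeomorphism onto its image) to a smooth map $\tilde\psi\colon M_{1}\setminus Z_{1}\to X$. To extend it across $Z_{1}$, I would use $b$-adapted coordinates $(x_{1},y_{1},\vec u)$ on $(M_{1},Z_{1})$ and $(x_{2},y_{2},\vec v)$ on $(M_{2},Z_{2})$, together with the standard affine charts on $Bl_{Z_{1}\times Z_{2}}(M_{1}\times M_{2})$. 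Writing $f^{*}x_{2}=x_{1}\tilde{a}$ for some smooth $\tilde a$, the transversality of $f$ to $Z_{2}$ along $Z_{1}$ (the usual $b$-map condition implicit in treating $f(Z_{1})\subset Z_{2}$ as a morphism in the $b$-category) forces $\tilde a$ to be nowhere vanishing on $Z_{1}$. In the affine chart where $x_{2}=x_{1}s$, the lift then reads $(x_{1},y_{1},\vec u)\mapsto(x_{1},\tilde a,y_{1},\vec u,b,\vec c)$, manifestly smooth and with image in $X$ since $\tilde a\neq 0$ keeps it away from both proper transforms $\overline{\overline{M_{1}\times Z_{2}}}$ and $\overline{\overline{Z_{1}\times M_{2}}}$. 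From the same formula one reads off $\tilde\psi^{-1}(E)=Z_{1}$ and $\tilde\psi\pitchfork E$, where $E=p^{-1}(Z_{1}\times Z_{2})$; so $C:=\overline{\overline{graph(f)}}$ is a $b$-submanifold of $(X,E)$.

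Coisotropicity then follows by density. By Proposition \ref{distribution}, a $b$-submanifold is coisotropic if and only if the subbundle inclusion $({}^{b}TC)^{\omega_{X}}\subset{}^{b}TC$ of ${}^{b}TX|_{C}$ holds, which is a closed condition on $C$. The blow-down $p$ restricts to a Poisson diffeomorphism $X\setminus E\xrightarrow{\cong}(M_{1}\setminus Z_{1})\times(M_{2}\setminus Z_{2})$, and under this identification $C\setminus E$ becomes the graph of the Poisson map $f|_{M_{1}\setminus Z_{1}}$ between symplectic manifolds, hence is Lagrangian. Thus the subbundle condition holds on the open dense subset $C\setminus E$ and, by continuity, on all of $C$.

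The main technical obstacle is the smooth extension of the lift $\tilde\psi$ across $Z_{1}$: this is precisely the content of the statement that the real projective blow-up resolves the clean (but non-transverse) intersection of $graph(f)$ with $Z_{1}\times Z_{2}$, and it is here that the $b$-map condition on $f$ is essential in ensuring the resolved submanifold sits inside $X$ rather than on the discarded proper transforms. Everything else is either a direct linear-algebra check in the same local chart or the density argument.
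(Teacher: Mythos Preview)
Your density argument for coisotropicity is exactly what the paper does: $graph(f)$ is coisotropic in $M_1\times\hat M_2$ because $f$ is Poisson, $p$ is a Poisson diffeomorphism off the exceptional divisor, and one passes to the closure. Where you differ is in establishing that $\overline{\overline{graph(f)}}$ is a $b$-submanifold. The paper does not build an explicit lift $M_1\hookrightarrow X$; instead it argues abstractly. First, the hypothesis $f(Z_1)\subset Z_2$ alone guarantees that $graph(f)\cap(Z_1\times Z_2)=graph(f|_{Z_1})$ is a clean intersection, so the proper transform is a submanifold of the blow-up. Then the paper chooses coordinates $(x_i^{(1)},\,f^*x_j^{(2)}-x_j^{(2)})$ on $M_1\times M_2$, which are simultaneously adapted to $graph(f)$ and to $Z_1\times Z_2$, and invokes a separate linear-algebra lemma (Lemma~\ref{lem:bltrans}) asserting that the proper transform of a coordinate subspace is transverse to the exceptional divisor. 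This lemma is the workhorse replacing your explicit chart computation.

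One point deserves care. Your argument uses that $\tilde a=f^*x_2/x_1$ is nonvanishing on $Z_1$, i.e.\ that $f$ is transverse to $Z_2$, and you call this ``implicit'' in the hypothesis. It is not: the proposition assumes only that $f$ is Poisson with $f(Z_1)\subset Z_2$. The paper's route genuinely avoids this extra assumption for the transversality-to-$E$ step, since in its coordinates the graph sits in $\{b_1=0\}$ and the limiting normal direction is always $[1:0]$ regardless of $\tilde a$. On the other hand, your assumption is precisely what forces the lifted graph to miss the deleted proper transforms $\overline{\overline{M_1\times Z_2}}$ and $\overline{\overline{Z_1\times M_2}}$ and hence to lie in $X$; the paper's proof does not address this containment explicitly. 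So your explicit approach gains the identification $\overline{\overline{graph(f)}}\cong M_1$ and pins down exactly when the lift lands in $X$, at the cost of a hypothesis the paper does not state.
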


\begin{proof}
The intersection $graph(f)\cap Z_1\times Z_2$ is clean, since it coincides with $graph(f|_{Z_1})$ thanks to the assumption $f(Z_1)\subset Z_2$. Hence $graph(f)$ can be ``lifted'' to $X$.

The resulting submanifold $\overline{\overline{graph(f)}}$ is coisotropic: 
$graph(f)$ is coisotropic in $M_1\times {\hat{M}_2}$ because $f$ is a Poisson map, so 
 $p^{-1}(graph(f)\setminus Z_1\times Z_2)$ is coisotropic in $X$ ({since $p$ is a Poisson diffeomorphism away from the exceptional divisor}), and the same holds for its closure. 
 
To finish the proof, we have to show that  $\overline{\overline{graph(f)}}$ is transverse to the exceptional divisor $E:=p^{-1}(Z_1\times Z_2)$. Let $(x^{(1)}_{i})$ be local coordinates on $M_1$ such that  $Z_1=\{x^{(1)}_{1}=0\}$, and similarly let $(x^{(2)}_{j})$ be local coordinates on   $M_2$ such that   $Z_2=\{x^{(2)}_{1}=0\}$. Then $$(x^{(1)}_{i},f^*(x^{(2)}_{j})-x^{(2)}_{j})$$ are local coordinates on $M_1\times M_2$ that are adapted $graph(f)$, but also to
 $$(Z_1\times Z_2)=\{x^{(1)}_{1}=0,f^*(x^{(2)}_{1})-x^{(2)}_{1}=0\},$$ due to the hypothesis $f(Z_1)\subset Z_2$. Hence we can apply Lemma \ref{lem:bltrans}, which yields the desired transversality.
 \end{proof}

The proof of Proposition \ref{prop:coisoBl} uses the following statement, for which 
we could not find a reference in the literature.

\begin{lemma}\label{lem:bltrans}
Let $m,n$ be non-negative integers. Consider $\RR^{n+m}$ with standard coordinates $x_1,\dots,x_n,y_1,\dots,y_{m}$, and the subspaces 
\begin{align*}
Z:=&\{0\}\times \RR^{m},\\
S:=&(\RR^{k}\times \{0\})\times (\RR^{l }\times \{0\})
\end{align*}
  where $k\le n$ and $ l\le m$. Then, in the blow-up $Bl_Z(\RR^{n+m})$, the submanifold $\overline{\overline{S}}$ interesects transversely the exceptional divisor $E$.
\end{lemma}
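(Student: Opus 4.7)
The plan is to work in explicit affine charts of the blow-up $Bl_Z(\RR^{n+m})$ and verify transversality coordinate-wise, since both $\overline{\overline{S}}$ and $E$ admit transparent equations in those charts.

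First I would set up the standard chart system: cover $Bl_Z(\RR^{n+m})$ by $n$ affine charts $U_1,\ldots,U_n$, where $U_i$ has coordinates $(s_1,\ldots,s_n,y_1,\ldots,y_m)$ and the blow-down map reads
\[
\pi_i(s_1,\ldots,s_n,y_1,\ldots,y_m)=(s_1 s_i,\ldots,s_{i-1}s_i,s_i,s_{i+1}s_i,\ldots,s_n s_i,y_1,\ldots,y_m),
\]
so that $E\cap U_i=\{s_i=0\}$.

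Next I would compute the strict transform $\overline{\overline{S}}$ in each chart. For $i\le k$, away from $\{s_i=0\}$ the preimage $\pi_i^{-1}(S\setminus Z)$ is cut out by $s_j s_i=0$ for $j>k$ and $y_j=0$ for $j>l$, which simplifies to the smooth linear subspace $\{s_j=0:j>k\}\cap\{y_j=0:j>l\}$; taking closures shows $\overline{\overline{S}}\cap U_i$ is precisely this subspace (now including its locus $s_i=0$). For $i>k$, the condition $x_i=s_i=0$ on $S$ forces $s_i=0$, so $\pi_i^{-1}(S\setminus Z)\cap U_i=\emptyset$. I would then observe that a point of $\overline{\overline{S}}\cap E$ corresponds to a limit direction $[t_1:\cdots:t_n]$ of approach to $Z$ from within $S\setminus Z\subset\{x_j=0:j>k\}$, hence satisfies $t_j=0$ for $j>k$; in particular $(t_1,\ldots,t_k)\neq 0$, which shows that every such point already lies in some chart $U_i$ with $i\le k$.

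Finally, at any $p\in\overline{\overline{S}}\cap E\cap U_i$ with $i\le k$, the tangent space $T_p\overline{\overline{S}}$ is spanned by $\pd{s_1},\ldots,\pd{s_k},\pd{y_1},\ldots,\pd{y_l}$, whereas $T_pE$ is spanned by $\{\pd{s_j}:j\neq i\}\cup\{\pd{y_j}:1\le j\le m\}$; since $i\le k$ places $\pd{s_i}$ in $T_p\overline{\overline{S}}$, the sum $T_p\overline{\overline{S}}+T_pE$ contains all coordinate vector fields of $U_i$, giving the desired transversality. The main obstacle I expect is purely bookkeeping: verifying that the closure description of $\overline{\overline{S}}$ really produces the smooth submanifold above and confirming that charts with $i>k$ contribute no points to $\overline{\overline{S}}\cap E$; once this is established, transversality reduces to a one-line check.
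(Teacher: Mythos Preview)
Your proof is correct and complete; the bookkeeping you flag as a potential obstacle is in fact fine, since for an open chart $U_i$ one always has $\overline{\overline{S}}\cap U_i$ equal to the closure in $U_i$ of $p^{-1}(S\setminus Z)\cap U_i$, and your computation of the latter as a coordinate subspace is accurate.

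Your route differs from the paper's. The paper works with the global model $Bl_Z(\RR^{n+m})\subset \RR^{n+m}\times\RR P^{n-1}$, writes down $\overline{\overline{S}}$ and $\overline{\overline{S}}\cap E=\{0\}\times(\RR^l\times\{0\})\times\RR P^{k-1}$ explicitly, and then for each $p\in\overline{\overline{S}}\cap E$ exhibits a single curve $\gamma(t)=((tx_1,0,y_1,0),[(x_1,0)])$ in $\overline{\overline{S}}$ with $\gamma'(0)\notin T_pE$; since $E$ has codimension one this suffices. Your approach instead passes to affine charts and reduces everything to linear algebra of coordinate subspaces. The paper's argument is shorter because it exploits the codimension-one nature of $E$ directly (one transverse direction is enough), whereas yours computes the full tangent spaces; on the other hand, your chart computation makes the smooth-manifold structure of $\overline{\overline{S}}$ completely explicit and would adapt more readily if one needed to verify transversality to a higher-codimension submanifold.
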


\begin{proof}
We have $$Bl_Z(\RR^{n+m})=\overline{\Big\{\Big((x,y),[x]\Big): x\in \RR^{n}\setminus\{0\}, y\in \RR^{m}\Big\}}\;\subset \RR^{n+m}\times \RR P^{n-1},$$
where $[\cdot]$ denotes the   class in projective space.
Notice that by taking the closure we are adding exactly the exceptional divisor $$E=\{0\}\times \RR^{m}\times \RR P^{n-1}.$$ We have
$$\overline{\overline{S}}=\overline{\Big\{\Big((x_1,0,y_1,0),[(x_1,0)]\Big): x_1\in \RR^{k}\setminus\{0\}, y_1\in \RR^{l}\Big\}}.
$$ By taking the closure we are adding exactly 
$$\{0\}\times (\RR^{l}\times\{0\})\times \RR P^{k-1}=\overline{\overline{S}}\cap E.$$
For every point $p\in \overline{\overline{S}}\cap E$  there is a curve of the form 
$$\gamma\colon t\mapsto \Big((tx_1,0,y_1,0),[(x_1,0)]\Big)$$
lying in $\overline{\overline{S}}$ with $\gamma(0)=p$, and clearly $\frac{d}{dt}|_0\gamma(t)\notin  T_pE$. 
Since {$\frac{d}{dt}|_0\gamma(t)\in T_p\overline{\overline{S}}$ and} 
$E$ has codimension $1$, we obtain
$T_pE+T_p\overline{\overline{S}}=T_pBl_Z(\RR^{n+m})$.
\end{proof}

\begin{remark}
One can show that for any pair of submanifolds $L$ and $S$ intersecting cleanly, {around any point of the intersection} there exist local coordinates of the ambient manifold $M$ that are simultaneously adapted to both submanifolds. Lemma \ref{lem:bltrans} than implies that, with the notation of the beginning of this subsection,  $\overline{\overline{S}}$ intersects transversely the hypersurface $p^{-1}(L)$ of $Bl_L(M)$.
  \end{remark}

\subsection{$b$-coisotropic embeddings and the $b$-Gotay theorem}
\leavevmode
\vspace{0.15cm}

If $C\overset{i}{\hookrightarrow}(M,Z,\omega)$ is $b$-coisotropic, then Proposition \ref{distribution} implies that $(C,C\cap Z,^bi^*\omega)$ is $b$-presymplectic, i.e. the $b$-two-form $^bi^*\omega\in{}^{b}\Omega^{2}(C)$ is closed of constant rank. Conversely, in this subsection we prove that any $b$-presymplectic manifold embeds $b$-coisotropically into a $b$-symplectic manifold, which is unique up to neighborhood equivalence.
In other words, we show a version of Gotay's theorem for $b$-coisotropic submanifolds. For Lagrangian submanifolds, this becomes a version of Weinstein's tubular neighborhood theorem, which was already obtained in \cite[Theorem 5.18]{CharlotteThesis}.

As a consequence, \emph{a $b$-coisotropic submanifold $C\subset (M,Z,\omega)$ determines $\omega$ (up to $b$-symplectomorphism) in a neighborhood of $C$}.  
Notice that arbitrary coisotropic submanifolds of the log-symplectic manifold $(M,Z,\Pi)$ do not satisfy this property: for instance $Z$ is a coisotropic (even Poisson) submanifold, and by \cite{miranda} the additional data consisting of a certain element of $H^1_{\Pi}(Z)$ is necessary in order to determine the $b$-symplectic structure in a neighborhood of $Z$.

\begin{defi}
A \emph{$b$-presymplectic form} on a $b$-manifold is a $b$-two-form  which is closed and of constant rank.
\end{defi}

\begin{defi}
	Let $(M_{1},Z_{1},\omega)$ be a $b$-manifold endowed with a 
	$b$-presymplectic form
	$\omega\in{}^{b}\Omega^{2}(M_{1})$. A \emph{$b$-coisotropic embedding} of $M_{1}$ into a $b$-symplectic manifold $(M_{2},Z_{2},\Omega)$ is a $b$-map $\phi:(M_{1},Z_{1})\rightarrow(M_{2},Z_{2})$ such that $\phi$ is an embedding and
	\begin{enumerate}[i)]
		\item ${}^{b}\phi^{*}\Omega=\omega$.
		\item $\phi(M_{1})$ is $b$-coisotropic in $(M_{2},Z_{2},\Omega)$.
	\end{enumerate}
\end{defi}

We will prove the following Gotay theorem in the $b$-symplectic setting.

\begin{thm}[The $b$-Gotay theorem]\label{gotay}
	Let $(C,Z_{C},\omega_{C})$ be a $b$-manifold with a 
$b$-presymplectic form
	$\omega_{C}\in{}^{b}\Omega^{2}(C)$. We then have the following:
\begin{itemize}
\item[a)]	 $C$ embeds $b$-coisotropically into a $b$-symplectic manifold,
\item[b)] the embedding is unique up to $b$-symplectomorphism in a tubular neighborhood of $C$, fixing $C$ pointwise.
\end{itemize}
\end{thm}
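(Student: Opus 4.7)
My plan is to mimic the classical proof of Gotay's theorem: construct an explicit coisotropic model for part (a), then reduce the uniqueness statement in part (b) to the relative $b$-Moser theorem (Proposition~\ref{moser}).

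For part (a), set $E := \ker(\omega_C) \subset {}^{b}TC$, a subbundle of constant rank by hypothesis, and choose a complement $G$ so ${}^{b}TC = E \oplus G$; then $\omega_C|_G$ is non-degenerate. The model will be the total space of $p\colon E^* \to C$, regarded as a $b$-manifold with hypersurface $E^*|_{Z_C}$. The splitting identifies $E^*$ with the annihilator $G^{\circ} \subset {}^{b}T^*C$; pulling back the canonical $b$-symplectic form $\omega_{\mathrm{can}}$ on ${}^{b}T^*C$ (Example~\ref{ex:bcotg}) yields a closed $b$-$2$-form $\sigma$ on $E^*$. I then define
\[
\Omega := {}^{b}p^*\omega_C + \sigma \in {}^{b}\Omega^2(E^*),
\]
which is automatically closed. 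Non-degeneracy is a fiberwise check at points $c$ of the zero section, where the decomposition ${}^{b}T_cE^* \cong {}^{b}T_cC \oplus E^*_c$ of Lemma~\ref{decomposition} combined with a short coordinate calculation gives
\[
\Omega_c\bigl((v_1,\xi_1),(v_2,\xi_2)\bigr) = \omega_C|_c(v_1,v_2) + \xi_2(\pi_E v_1) - \xi_1(\pi_E v_2),
\]
with $\pi_E\colon {}^{b}TC \to E$ the projection along $G$. Non-degeneracy of $\omega_C|_G$ and of the tautological pairing $E \otimes E^* \to \mathbb{R}$ make this a non-degenerate form, so $\Omega$ is $b$-symplectic on a tubular neighborhood of the zero section. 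The inclusion $C \hookrightarrow E^*$ satisfies ${}^{b}i^*\Omega = \omega_C$, and the formula above shows $({}^{b}TC)^\Omega = E \subset {}^{b}TC$, proving $b$-coisotropicity.

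For part (b), let $\phi_i\colon C \hookrightarrow (M_i, Z_i, \Omega_i)$ ($i = 1,2$) be two $b$-coisotropic embeddings. The $b$-symplectic form $\Omega_i$ provides a canonical isomorphism of the $b$-normal bundle with $E^*$,
\[
{}^{b}TM_i|_{\phi_i(C)}\big/\,\phi_{i*}({}^{b}TC) \xrightarrow{\;\cong\;} E^*,\qquad [X]\mapsto \Omega_i(X,\cdot)|_E,
\]
well-defined by $b$-coisotropicity. Using a Riemannian metric for which $Z_i$ is totally geodesic (as in the proof of Lemma~\ref{poincare}), I would choose a splitting of ${}^{b}TM_i|_{\phi_i(C)} \twoheadrightarrow E^*$ whose image is isotropic with respect to $\Omega_i$; such splittings exist by standard symplectic linear algebra since $\phi_i(C)$ is coisotropic. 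The $b$-exponential maps then produce tubular $b$-embeddings $\psi_i\colon U \hookrightarrow M_i$, where $U \subset E^*$ is a neighborhood of the zero section, with $\psi_i|_C = \phi_i$. By construction, the pulled-back $b$-symplectic forms ${}^{b}\psi_i^*\Omega_i$ agree on ${}^{b}TE^*|_C$ as $b$-$2$-forms (both restrict to $\omega_C$ on ${}^{b}TC$, to the canonical pairing as cross term determined by the normal bundle identification, and to zero on the isotropic fiber directions). Proposition~\ref{moser} then yields a $b$-diffeomorphism $\varphi$ between neighborhoods of $C$ in $E^*$ fixing $C$ pointwise with ${}^{b}\varphi^*({}^{b}\psi_2^*\Omega_2) = {}^{b}\psi_1^*\Omega_1$, whence $\psi_2 \circ \varphi \circ \psi_1^{-1}$ is the required $b$-symplectomorphism.

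The main obstacle is the compatibility step in part (b): producing tubular embeddings $\psi_i$ whose pullbacks agree on the whole of ${}^{b}TE^*|_C$, not merely in pullback to $C$, since Moser needs this stronger agreement for the convex combination $\omega_t$ to remain non-degenerate near $C$. This will require a careful combination of the canonical $b$-normal bundle identification with a choice of \emph{isotropic} complement to $\phi_{i*}({}^{b}TC)$, realized geometrically as the fiber directions of a $b$-exponential tubular embedding. Once this is arranged, the relative $b$-Moser theorem applies directly and finishes the argument.
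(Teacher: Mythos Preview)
Your proposal is correct and follows the same strategy as the paper: part (a) reproduces Proposition~\ref{existence} verbatim, and part (b) matches Proposition~\ref{normal}, where the paper likewise chooses an isotropic (in fact Lagrangian) complement $V$ to ${}^{b}TC$ inside $({}^{b}TM|_C,\omega)$, builds a tubular neighborhood whose $b$-derivative is the identity along $C$, and then applies the relative $b$-Moser theorem. The only place where the paper is more explicit than you is the ``$b$-exponential'' step you flag as the main obstacle: there the paper passes from $V\subset{}^{b}TM|_C$ to $\rho(V)\subset TM|_C$ via the anchor (using Lemma~\ref{L} to see that $\rho|_V$ is injective and that $\rho(V)$ complements $TC$), exponentiates with a metric making $Z$ totally geodesic, and checks directly that the resulting map $\phi=\exp_g\circ\rho$ satisfies ${}^{b}\phi_*|_C=\mathrm{Id}$, which is exactly the compatibility you need.
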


\noindent
We divide the proof of Theorem \ref{gotay} into several steps. We roughly follow the reasoning from the symplectic case, presented in \cite{Gotay}.
We start by constructing a $b$-symplectic thickening of the $b$-presymplectic manifold $C$, from which item $a)$ of Theorem \ref{gotay} will follow.

\begin{prop}\label{existence}
Denote by $E$ the vector bundle $\text{Ker}(\omega_{C})\subset{}^{b}TC$. Then there is a $b$-symplectic structure $\Omega_G$ on a neighborhood of the zero section $C\subset E^{*}$.
	\end{prop}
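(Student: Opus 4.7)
The plan is to mimic Gotay's construction from the symplectic case, working throughout with $b$-objects and exploiting the splitting ${}^{b}TE^{*}|_{C}\cong{}^{b}TC\oplus E^{*}$ provided by Lemma \ref{decomposition}. First, I would pick a complementary subbundle $G\subset{}^{b}TC$ to $E$, so that ${}^{b}TC=E\oplus G$. Since $E=\ker(\omega_{C})$ has constant rank, such a $G$ exists by a partition-of-unity argument, and by construction $\omega_{C}|_{G}$ is a fiberwise non-degenerate $b$-$2$-form on $G$. The splitting yields a fiberwise projection $p_{E}\colon{}^{b}TC\to E$ along $G$.

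Next, denote $\pi\colon(E^{*},E^{*}|_{Z_{C}})\to(C,Z_{C})$ the bundle projection, which is a $b$-map, and define a $b$-$1$-form $\alpha\in{}^{b}\Omega^{1}(E^{*})$ by
\[
\alpha_{\xi}(v)=\bigl\langle\xi,\,p_{E}\bigl({}^{b}\pi_{*}v\bigr)\bigr\rangle,\qquad \xi\in E^{*},\ v\in{}^{b}T_{\xi}E^{*}.
\]
Note that $\alpha$ vanishes on vertical $b$-vectors, since they lie in $\ker({}^{b}\pi_{*})$, and $\alpha$ vanishes identically along the zero section $C\subset E^{*}$. Then I would set
\[
\Omega_{G}:=\pi^{*}\omega_{C}-{}^{b}d\alpha\;\in\;{}^{b}\Omega^{2}(E^{*}),
\]
which is automatically ${}^{b}d$-closed, as $\pi^{*}\omega_{C}$ is closed and ${}^{b}d\alpha$ is exact.

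The main verification is non-degeneracy of $\Omega_{G}$ along $C$. Using Lemma \ref{decomposition}, at $p\in C$ write
\[
{}^{b}T_{p}E^{*}\cong{}^{b}T_{p}C\oplus E^{*}_{p}=E_{p}\oplus G_{p}\oplus E^{*}_{p},
\]
and decompose $v=v_{E}+v_{G}+v_{*}$, $w=w_{E}+w_{G}+w_{*}$. The term $\pi^{*}\omega_{C}$ only sees the ${}^{b}TC$-components, and since $E=\ker(\omega_{C})$ it reduces to $\omega_{C}(v_{G},w_{G})$. For the term $({}^{b}d\alpha)_{p}$, the fact that $\alpha$ vanishes on verticals and along $C$ lets the bracket contribution in Cartan's formula drop out; a direct computation, extending the horizontal components as sections of ${}^{b}TC$ (via Lemma \ref{decomposition}) and the vertical components as constant fiber fields, gives
\[
({}^{b}d\alpha)_{p}(v,w)=\langle v_{*},w_{E}\rangle-\langle w_{*},v_{E}\rangle,
\]
because $p_{E}$ kills $G_{p}$ and annihilates along $C$. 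Combining,
\[
\Omega_{G}|_{p}(v,w)=\omega_{C}(v_{G},w_{G})+\langle w_{*},v_{E}\rangle-\langle v_{*},w_{E}\rangle,
\]
whose matrix in the decomposition $E_{p}\oplus G_{p}\oplus E^{*}_{p}$ is block-antidiagonal between the $E$ and $E^{*}$ factors and equals $\omega_{C}|_{G_{p}}$ on the $G$ factor. This is manifestly non-degenerate. Since non-degeneracy of a $b$-$2$-form is an open condition on ${}^{b}TE^{*}$, shrinking to a tubular neighborhood $U$ of $C$ in $E^{*}$ makes $\Omega_{G}$ non-degenerate throughout $U$, hence $b$-symplectic on the $b$-manifold $(U,U\cap E^{*}|_{Z_{C}})$.

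The only real obstacle is keeping track of the $b$-geometry bookkeeping in step~3, in particular confirming that the formula for $\alpha$ really defines a smooth $b$-$1$-form (which uses Lemma \ref{bder} and the fact that $\pi$ is a $b$-map), and computing $({}^{b}d\alpha)|_{C}$ cleanly. Everything else — closedness of $\Omega_{G}$, openness of non-degeneracy — is formal once one has the $b$-Cartan calculus reviewed in Section \ref{sec:background}. Uniqueness up to neighborhood equivalence (item~$b$) of Theorem \ref{gotay}) is not part of this Proposition; it will be handled separately via the relative $b$-Moser theorem (Proposition \ref{moser}).
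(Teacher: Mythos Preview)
Your proposal is correct and is essentially the paper's proof: your $b$-one-form $\alpha$ is exactly ${}^{b}j^{*}\theta_{can}$ for the inclusion $j\colon E^{*}\hookrightarrow{}^{b}T^{*}C$ induced by the splitting ${}^{b}TC=E\oplus G$, so your $\Omega_{G}={}^{b}\pi^{*}\omega_{C}-{}^{b}d\alpha$ coincides with the paper's $\Omega_{G}={}^{b}\pi^{*}\omega_{C}+{}^{b}j^{*}\omega_{can}$. The paper computes $({}^{b}j^{*}\omega_{can})|_{C}$ via the pairing description of $\omega_{can}$ under Lemma~\ref{decomposition} rather than via Cartan's formula on $\alpha$, but the resulting block matrix on $E_{p}\oplus G_{p}\oplus E_{p}^{*}$ and the non-degeneracy argument are identical to yours.
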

	\begin{proof}
		Fix a complement $G$ to $E$ in ${}^{b}TC$, and let  $j:E^{*}\hookrightarrow {}^{b}T^{*}C$ be the induced inclusion.
It is clear that $j(E^{*})=G^{0}$. Since both the bundle projection $\pi:\left(E^{*},E^{*}|_{Z_{C}}\right)\rightarrow (C,Z_{C})$ and the inclusion $j:\left(E^{*},E^{*}|_{Z_{C}}\right)\rightarrow\left({}^{b}T^{*}C,{}^{b}T^{*}C|_{Z_{C}}\right)$ are $b$-maps, we can define a $b$-two-form $\Omega_{G}$ on $(E^{*},E^{*}|_{Z_{C}})$ by
		\begin{equation}\label{Omega}
		\Omega_{G}:={}^{b}\pi^{*}\omega_{C}+{}^{b}j^{*}\omega_{can}.
		\end{equation}
		Here $\omega_{can}$ denotes the canonical $b$-symplectic form on ${}^{b}T^{*}C$ as in  Example \ref{ex:bcotg}, and the subscript $G$ is used to stress that the definition depends on the choice of complement $G$. 
		
		We want to show that $\Omega_{G}$ is $b$-symplectic on a neighborhood of $C\subset \left(E^{*},E^{*}|_{Z_{C}}\right)$. As $\Omega_{G}$ is clearly $b$-closed, it suffices to prove that $\Omega_{G}$ is non-degenerate at points $p\in C$.

		\begin{claim*}
			Under the decomposition
			\begin{equation*}
		{}^{b}T_{p}\left({}^{b}T^{*}C\right)\cong{}^{b}T_{p}C\oplus{}^{b}T_{p}^{*}C,
		\end{equation*}
		of Lemma \ref{decomposition},
			 the canonical $b$-symplectic form is the usual pairing
			\begin{equation}\label{pairing}
			\left(\omega_{can}\right)_{p}\left(v+\alpha,w+\beta\right)=\langle v,\beta\rangle - \langle w,\alpha\rangle.
			\end{equation}
		\end{claim*}
		\noindent
This claim can be checked writing in cotangent coordinates
$
		\omega_{can}=\frac{dx_{1}}{x_{1}}\wedge dy_{1}+\sum_{i=2}^{n}dx_{i}\wedge dy_{i},
$
and noticing that $y_{i}$ is a linear coordinate on each fiber ${}^{b}T_{p}^{*}C$, i.e. $y_{i}\in\left({}^{b}T_{p}^{*}C\right)^{*}\cong{}^{b}T_{p}C$. \hfill $\bigtriangleup$

\vspace{0.3cm}
\noindent
Consider now the decomposition
		\begin{equation}\label{d}
		{}^{b}T_{p}E^{*}\cong{}^{b}T_{p}C\oplus E_{p}^{*}= E_{p}\oplus G_{p}\oplus E_{p}^{*}
		\end{equation}
given by  Lemma \ref{decomposition}.
		Using      Lemma \ref{maps} $b)$  we have $\left({}^{b}j_{*}\right)_{p}=\text{Id}_{\hspace{0.05cm} {}^{b}T_{p}C}\oplus\left.j\right|_{E_{p}^{*}}$.
		Hence under the  decomposition \eqref{d}
		 we have
		\begin{align*}
		\left({}^{b}j^{*}\omega_{can}\right)_{p}(v+w+\alpha,x+y+\beta)&=\left(\omega_{can}\right)_{p}\big(v+w+j(\alpha),x+y+j(\beta)\big)\\
		&=\langle v+w,j(\beta)\rangle - \langle x+y,j(\alpha)\rangle\\
		&=\langle v,j(\beta)\rangle-\langle x,j(\alpha)\rangle,
		\end{align*}
		using the above claim and recalling that $j(E_{p}^{*})=G_{p}^{0}$.
		In matrix notation,
		\begin{equation}\label{block1}
		\left({}^{b}j^{*}\omega_{can}\right)_{p}=\begin{blockarray}{cccc}
		& E_{p} & G_{p} & E_{p}^{*} \\
		\begin{block}{c(ccc)}
		E_{p} & 0 & 0 & A \\
		G_{p} & 0 & 0 & 0 \\
		E_{p}^{*} & -A & 0 & 0 \\
		\end{block}
		\end{blockarray},
		\end{equation}
		for some matrix $A$ of full rank.
		Similarly we have $\left({}^{b}\pi_{*}\right)_{p}=\text{Id}_{\hspace{0.05cm} {}^{b}T_{p}C}\oplus0$,
 applying Lemma \ref{maps} $b)$ to $\pi$ (regarded as a vector bundle map).	
		Therefore, under \eqref{d} we get
		\[
		\left({}^{b}\pi^{*}\omega_{C}\right)_{p}(v+w+\alpha,x+y+\beta)=\left(\omega_{C}\right)_{p}(v+w,x+y),
		\]
		so that we get a matrix representation of the form
		\begin{equation}\label{block2}
		\left({}^{b}\pi^{*}\omega_{C}\right)_{p}=\begin{blockarray}{cccc}
		& E_{p} & G_{p} & E_{p}^{*} \\
		\begin{block}{c(ccc)}
		E_{p} & 0&0 & 0 \\
		G_{p} & 0& B & 0 \\
		E_{p}^{*} & 0 & 0 & 0 \\
		\end{block}
		\end{blockarray},
		\end{equation}
		where we also use that $E=\text{Ker}(\omega_{C})$. Note that the matrix $B$ in \eqref{block2} is of full rank since the restriction of $\left(\omega_{C}\right)_{p}$ to $G_{p}$ is non-degenerate. Combining \eqref{block1} and \eqref{block2}, we have that
		\begin{equation}\label{matrix}
		\left(\Omega_{G}\right)_{p}=\left({}^{b}\pi^{*}\omega_{C}\right)_{p}+\left({}^{b}j^{*}\omega_{can}\right)_{p}=\begin{blockarray}{cccc}
		& E_{p} & G_{p} & E_{p}^{*} \\
		\begin{block}{c(ccc)}
		E_{p} & 0&0 & A \\
		G_{p} & 0& B & 0 \\
		E_{p}^{*} & -A & 0 & 0 \\
		\end{block}
		\end{blockarray}\;,
		\end{equation}
		which is of maximal rank. Therefore, $\Omega_{G}$ is non-degenerate at points $p\in C\subset E^{*}$.
	\end{proof}

\begin{proof}[Proof of item a) of Theorem \ref{gotay}]
We show that the inclusion $(C,Z_{C},\omega_{C})\overset{i}\hookrightarrow(E^{*},E^{*}|_{Z_{C}},\Omega_{G})$ is indeed a $b$-coisotropic embedding, i.e.
\begin{enumerate}[i)]
	\item ${}^{b}i^{*}\Omega_{G}=\omega_{C}$,
	\item ${}^{b}TC^{\Omega_{G}}\subset{}^{b}TC$.
\end{enumerate}
We have ${}^{b}i^{*}\Omega_{G}={}^{b}(\pi\circ i)^{*}\omega_{C}+{}^{b}(j\circ i)^{*}\omega_{can}=\omega_{C}+{}^{b}(j\circ i)^{*}\omega_{can}$. Note that $j\circ i$ is the inclusion of $C$ into ${}^{b}T^{*}C$, so that ${}^{b}(j\circ i)^{*}\omega_{can}=0$ since $C$ is $b$-Lagrangian in $\left({}^{b}T^{*}C,\omega_{can}\right)$. 

\noindent
To check ii), we let $p\in C$ and choose $v+w+\alpha\in E_{p}\oplus G_{p}\oplus E_{p}^{*}\cong{}^{b}T_{p}E^{*}$ lying in ${}^{b}T_{p}C^{\Omega_{G}}$. Let $x\in E_{p}\subset{}^{b}T_{p}C$ be arbitrary. Thanks to \eqref{matrix}, we then have
\[
0=\left(\Omega_{G}\right)_{p}(x,v+w+\alpha)=\left(\Omega_{G}\right)_{p}(x,\alpha),
\]
which forces that $\alpha=0$ due to non-degeneracy of $\left(\Omega_{G}\right)_{p}$ on $E_{p}\times E_{p}^{*}$. Hence $v+w+\alpha=v+w$ lies in $E_{p}\oplus G_{p}={}^{b}T_{p}C$, as desired. 
\end{proof}

The uniqueness statement $b)$ of Theorem \ref{gotay} is an immediate consequence of the following proposition, to which we devote the rest of this subsection.

\begin{prop}\label{normal} Let $(M,Z,\omega)$ be a $b$-symplectic manifold and
	$C$
	a $b$-coisotropic submanifold, with induced $b$-presymplectic form $\omega_{C}\in{}^{b}\Omega^{2}(C)$. Let $E:=\text{Ker}(\omega_{C})$ and fix a splitting ${}^{b}TC=E\oplus G$. Then there is a $b$-symplectomorphism $\tau$ between a neighborhood of $C\subset (M,Z,\omega)$ and a neighborhood of $C\subset (E^{*},E^{*}|_{C\cap Z},\Omega_{G})$, with $\tau|_{C}=\text{Id}_{C}$.
	\end{prop}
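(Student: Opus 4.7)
The plan is to construct a $b$-diffeomorphism $\phi$ between a neighborhood of $C$ in $(E^*, E^*|_{C\cap Z})$ and a neighborhood of $C$ in $(M,Z)$, restricting to $\mathrm{id}_C$ on the zero section, in such a way that the two $b$-symplectic forms $\phi^*\omega$ and $\Omega_G$ agree along $C$. Once this is achieved, the relative $b$-Moser theorem (Proposition~\ref{moser}) produces a $b$-diffeomorphism $\psi$ of a neighborhood of $C$ in $E^*$, fixing $C$ pointwise, with ${}^{b}\psi^*({}^{b}\phi^*\omega) = \Omega_G$. The desired $b$-symplectomorphism is then $\tau := (\phi \circ \psi)^{-1}$.

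For the construction of $\phi$, first I would observe that $G^\omega \subset {}^{b}TM|_C$ is a symplectic subbundle containing $E = ({}^{b}TC)^\omega$ as a Lagrangian subbundle: coisotropicity forces $E \subset {}^{b}TC \subset E^\omega$, and the splitting ${}^{b}TC = E \oplus G$ combined with non-degeneracy of $\omega_C$ on $G$ gives $G^\omega \cap {}^{b}TC = E$. I would pick a smooth Lagrangian complement $L$ to $E$ inside $G^\omega$, which exists by standard fiberwise linear algebra (e.g.\ via a compatible almost complex structure on the symplectic bundle $G^\omega$). Since $L \cap {}^{b}TC \subset G^\omega \cap {}^{b}TC = E$ and $L \cap E = 0$, the subbundle $L$ complements ${}^{b}TC$ in ${}^{b}TM|_C$; moreover $L \cap \mathbb{L} = 0$ along $C \cap Z$ because $\mathbb{L} \subset {}^{b}TC$ by Lemma~\ref{L}. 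Lemma~\ref{lem:distrsplit}(b) then identifies $L$ with the canonical lift $\sigma(\nu)$ of a distribution $\nu \subset TM|_C$ tangent to $Z$ along $C \cap Z$; a short dimension-and-intersection argument (using $\mathbb{L} \subset {}^{b}TC$ again together with $\rho({}^{b}T_pC) = T_p(C \cap Z)$) shows that $\nu$ complements $TC$ in $TM|_C$. Fixing a Riemannian metric on $M$ making $Z$ totally geodesic, as in the proof of Lemma~\ref{poincare}, I would use its exponential map to produce a $b$-diffeomorphism $\mathrm{Exp}$ from a neighborhood of the zero section in $(\nu, \nu|_{C \cap Z})$ onto a neighborhood of $C$ in $(M,Z)$, equal to $\mathrm{id}_C$ on the zero section. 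Finally, $\omega$ furnishes a vector bundle isomorphism $\alpha\colon \nu \to E^*$, $v \mapsto \omega(\sigma(v),\,\cdot\,)|_E$, which is non-degenerate because $E^\omega = {}^{b}TC$ is disjoint from $\sigma(\nu) = L$. Setting $\phi := \mathrm{Exp} \circ \alpha^{-1}$ completes the construction of the tubular embedding.

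The crucial point is then to verify $\phi^*\omega|_C = \Omega_G|_C$, both viewed as sections of $\wedge^2({}^{b}T^*E^*)|_C$. Using the canonical decomposition ${}^{b}TE^*|_C \cong {}^{b}TC \oplus E^* \cong E \oplus G \oplus E^*$ from Lemma~\ref{decomposition}, together with Lemma~\ref{maps}(b) applied to the vector bundle morphism $\alpha^{-1}$ and the fact that $\mathrm{Exp}$ has $b$-derivative $\mathrm{id}_{{}^{b}TC} \oplus \sigma$ along the zero section (which one checks from the commuting diagram \eqref{diag} and the totally-geodesic property), one finds $({}^{b}\phi_*)|_C = \mathrm{id}_{{}^{b}TC} \oplus \tilde{\alpha}^{-1}$, where $\tilde{\alpha}\colon L \to E^*$, $l \mapsto \omega(l,\,\cdot\,)|_E$. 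Expanding $\phi^*\omega|_p$ block by block in the $E \oplus G \oplus E^*$ decomposition and invoking (i) ${}^{b}i^*\omega = \omega_C$, (ii) isotropy $\omega|_{L \times L} = 0$, (iii) $\omega|_{G \times L} = 0$ since $L \subset G^\omega$, and (iv) the defining property of $\tilde{\alpha}$ for the pairing between $E$ and $E^*$, one recovers precisely the block matrix \eqref{matrix} representing $\Omega_G|_p$.

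The main obstacle will be this final fiberwise verification: it forces us to combine all the canonical $b$-identifications from Section~\ref{sec:background} (the lift $\sigma$, Lemmas~\ref{decomposition} and~\ref{maps}, and the computation of the $b$-derivative of $\mathrm{Exp}$) into a single coherent calculation at points of $C \cap Z$. A secondary technical point is ensuring the smoothness of the Lagrangian complement $L$ across $C \cap Z$, which is standard but requires an explicit choice such as a compatible almost complex structure on $(G^\omega, \omega|_{G^\omega})$, together with the use of a metric making $Z$ totally geodesic so that $\mathrm{Exp}$ is genuinely a $b$-map.
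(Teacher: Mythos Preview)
Your approach is essentially the paper's own proof: the paper also chooses a Lagrangian complement $V$ to $E$ inside $(G^{\omega},\omega)$, observes that the anchor maps it isomorphically to a complement of $TC$ in $TM|_{C}$ tangent to $Z$, exponentiates via a metric making $Z$ totally geodesic, identifies $V$ with $E^{*}$ through the $\omega$-pairing, and concludes with the relative $b$-Moser theorem after a block computation matching \eqref{matrix}. The only slip is the sign in your isomorphism: with $\alpha(v)=\omega(\sigma(v),\cdot)|_{E}$ the off-diagonal $E\times E^{*}$ blocks of $\phi^{*}\omega|_{C}$ come out as $-\beta(v)+\alpha(x)$ rather than the $+\beta(v)-\alpha(x)$ of $\Omega_{G}$, so you must take $\alpha(v)=-\iota_{\sigma(v)}\omega|_{E}$ (exactly the paper's map $\psi$) for the verification to close.
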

	\begin{proof}
		Since $\omega|_{G\times G}$ is non-degenerate, we have a decomposition ${}^{b}TM|_{C}=G\oplus G^{\omega}$ as symplectic vector bundles. Note that $E$ is a Lagrangian subbundle of $(G^{\omega},\omega)$, since
		\begin{equation}\label{orthogonal}
		E^{\omega}\cap G^{\omega}=\left(E\oplus G\right)^{\omega}={}^{b}TC^{\omega}={}^{b}TC^{\omega}\cap{}^{b}TC=E.
		\end{equation}
We fix  a Lagrangian complement $V$ to $E$ in $(G^{\omega},\omega)$, i.e. $G^{\omega}=E\oplus V$. 

The idea of the proof is to construct a $b$-diffeomorphism between neighborhoods of $C$ in $M$ and $E^*$ -- obtained as a composition of $b$-diffeomorphisms to a neighborhood in $V$ --   
whose $b$-derivative at points of $C$ pulls back $\Omega_{G}$ to $\omega$, and then apply a Moser argument.

We start by establishing a $b$-geometry version of the tubular neighborhood theorem, in which $V$ plays the role of the normal bundle to $C$.
		
\begin{claim}\label{bdiffeo}
			There is a $b$-diffeomorphism $\phi$ between a neighborhood of $C$ in $(V,V|_{C\cap Z})$ and
			a neighborhood of $C$ in $(M,Z)$, satisfying $\left.{}^{b}\phi_{*}\right|_{C}=\text{Id}_{\hspace{0.05cm}{}^{b}TM|_{C}}$.
		\end{claim}
		\noindent
		We will construct this map in two steps:
		\begin{equation*}
		V\xrightarrow[(1)]{} \rho(V)\xrightarrow[(2)]{} M
		\end{equation*}
		
		\noindent
		\textit{Step 1.} Let $\rho:{}^{b}TM\rightarrow TM$ denote the anchor map of ${}^{b}TM$ and notice that its restriction to $V$ is injective. To see this, recall the decomposition 
		\begin{equation}\label{count}
		{}^{b}TM|_{C}=G\oplus G^{\omega}=G\oplus E\oplus V={}^{b}TC\oplus V
		\end{equation}
		and the fact that $\text{Ker}(\rho|_{C})\subset{}^{b}TC$ by Lemma \ref{L}, so that $\text{Ker}(\rho)$ intersects $V$ trivially. As such, we get a $b$-diffeomorphism $\rho:\left(V,V|_{C\cap Z}\right)\rightarrow\left(\rho(V),\rho(V)|_{C\cap Z}\right)$.
		
		\vspace{0.3cm}
		\noindent
		\textit{Step 2.}
		The distribution $\rho(V)$ is complementary to $TC$, i.e. 
		\[
		TM|_{C}=TC\oplus\rho(V).
		\]
		Indeed, by Step 1, we have at any point $p\in C$
		\begin{equation*}
		\dim(T_{p}M)=\dim(T_{p}C)+\dim(V_{p})
		=\dim(T_{p}C)+\dim(\rho(V_{p})),
		\end{equation*}
		and moreover, if $v\in V_{p}$ is such that $\rho(v)\in T_{p}C$, then $v\in {}^{b}T_{p}C\cap V_{p}=\{0\}$. Now fix a Riemannian metric
		 $g$ on $M$ such that $Z\subset (M,g)$ is totally geodesic (e.g. \cite[Lemma 6.8]{Milnor}).
		The corresponding exponential map $\exp_{g}$ takes a neighborhood of $C\subset\rho(V)$ diffeomorphically onto a neighborhood of $C\subset M$. Moreover the fibers of $\rho(V)$ over $C\cap Z$ are mapped into $Z$, since $\rho(V_{p})\in T_{p}Z$ for $p\in C\cap Z$ and $Z$ is totally geodesic. Therefore, the map\footnote{Alternatively, one can apply \cite[Example 3.3.9, p. 88-89]{CandelConlonI} (see also \cite[Theorem 2]{SevestreWurzbacher}).} $\exp_{g}:\left(\rho(V),\rho(V)|_{C\cap Z}\right)\rightarrow (M,Z)$ is a  $b$-diffeomorphism between neighborhoods of $C$.

We now show that $\phi:=\exp_{g}\circ\rho\colon V\to M$ has the claimed property. That is, we show that $\left.\left[{}^{b}(\exp_{g}\circ\rho)_{*}\right]\right|_{C}$ is the identity map on ${}^{b}TV|_{C}\cong{}^{b}TC\oplus V={}^{b}TM|_{C}$,
		by checking that it acts as the identity on sections. We will need the commutative diagram
		\begin{equation}\label{diagramma}
		\begin{tikzcd}[column sep=large, row sep=large]
		{}^{b}TC\oplus V\arrow{rr}{{}^{b}(\exp_{g}\circ\rho)_{*}}\arrow{d}{\rho\oplus\text{Id}_{V}} & & {}^{b}TM|_{C}\arrow{d}{\rho}\\
		TC\oplus V\arrow{rr}{(\exp_{g}\circ\rho)_{*}}& & TM|_{C}
		\end{tikzcd},
		\end{equation}
		which implicitly uses $a)$ of Lemma \ref{maps}. We will also use that for all $q\in C$ the ordinary derivative reads 
		\begin{equation}\label{derivative}
		\left[(\exp_{g}\circ\rho)_{*}\right]_{q}:T_{q}V\cong T_{q}C\oplus V_{q}\rightarrow T_{q}M=T_{q}C\oplus\rho(V_{q}):w+v\mapsto w+\rho(v).
		\end{equation}
		For a section $X+Y\in\Gamma\left({}^{b}TC\oplus V\right)$ we now compute
		\begin{align*}
		\rho\left[{}^{b}(\exp_{g}\circ\rho)_{*}(X+Y)\right]&=(\exp_{g}\circ\rho)_{*}(\rho(X)+Y)\\
		&=\rho(X)+\rho(Y)\\
		&=\rho(X+Y)
		\end{align*}
		using \eqref{diagramma} in the first equality and \eqref{derivative} in the second.
		Since the anchor $\rho$ is injective on sections, this implies that ${}^{b}(\exp_{g}\circ\rho)_{*}(X+Y)=X+Y$, as desired. Claim \ref{bdiffeo} is proved.	
		\hfill $\bigtriangleup$
		
		\vspace{0.5cm}
		\noindent
		Next, the map $$\psi:V\rightarrow E^{*},\;v\mapsto \left.-\iota_{v}\omega\right|_{E}$$ is an isomorphism of vector bundles covering $\text{Id}_{C}$, whence a $b$-diffeomorphism between the total spaces (For the injectivity, note   that $\left.\iota_{v}\omega\right|_{E}=0$ implies that $v\in E^{\omega}\cap G^{\omega}=E$ as in \eqref{orthogonal}, so that $v\in V\cap E=\{0\}$). The composition $\psi\circ\phi^{-1}:(M,Z)\rightarrow (E^{*},E^{*}|_{C\cap Z})$ is a $b$-diffeomorphism between neighborhoods of $C$, with $\left.\left(\psi\circ\phi^{-1}\right)\right|_{C}=\text{Id}_{C}$.
		
		\begin{claim}
This $b$-diffeomorphism  satisfies		$\left.\left[{}^{b}(\psi\circ\phi^{-1})^{*}\Omega_{G}\right]\right|_{C}=\left.\omega\right|_{C}$.
		\end{claim}
		\noindent
		As before, let $\pi:E^{*}\rightarrow C$ denote the bundle projection, and let $j:E^{*}\hookrightarrow{}^{b}T^{*}C$ be the inclusion induced by the splitting ${}^{b}TC=E\oplus G$. 
				Since $\psi\colon V\rightarrow E^{*}$ is a vector bundle morphism covering $\text{Id}_C$,
by Lemma \ref{maps} b) we have  that $$ {}^{b}\psi_{*}|_C\colon
{}^{b}TV|_C\cong{}^{b}TC\oplus V\to {}^{b}TE^*|_C\cong{}^{b}TC\oplus E^* $$
equals $ Id_{\hspace{0.05cm}^{b}TC} \oplus \psi$.
Furthermore $\left.{}^{b}\phi_{*}\right|_{C}=\text{Id}_{\hspace{0.05cm}{}^{b}TM|_{C}}$
by Claim 1.  	
Therefore, for $p\in C$ and $x_i+v_i\in  {}^{b}T_pC \oplus V_{p}={}^{b}T_{p}M$, we have
\begin{equation}\label{eq}
\left[{}^{b}(\psi\circ\phi^{-1})^{*}\Omega_{G}\right]_{p}(x_{1}+v_{1},x_{2}+v_{2})=\left(\Omega_{G}\right)_{p}\big(x_{1}+\psi(v_{1}),x_{2}+\psi(v_{2})\big).
\end{equation}
Recalling equation \eqref{Omega} and applying Lemma \ref{maps} $b)$ as in the proof of Proposition \ref{existence}, we expand the right hand side of \eqref{eq} as follows:
		\begin{align*}
		\left(\Omega_{G}\right)_{p}\big(x_{1}+\psi(v_{1}),x_{2}+\psi(v_{2})\big)&=\omega_{p}(x_{1},x_{2})+\left(\omega_{can}\right)_{p}(x_{1}+j(\psi(v_{1})),x_{2}+j(\psi(v_{2})))\\
		&=\omega_{p}(x_{1},x_{2})+\left\langle x_{1},j(\psi(v_{2}))\right\rangle-\left\langle x_{2},j(\psi(v_{1}))\right\rangle\\
		&=\omega_{p}(x_{1},x_{2})+\langle e_{1},\psi(v_{2})\rangle - \langle e_{2},\psi(v_{1})\rangle\\
		&=\omega_{p}(x_{1},x_{2})+\omega_{p}(e_{1},v_{2})+\omega_{p}(v_{1},e_{2})\\
		&=\omega_{p}(x_{1}+v_{1},x_{2}+v_{2}),
		\end{align*}
		using equation \eqref{pairing} in the second equality, 
writing $x_i=e_i+g_i\in E_{p}\oplus G_{p}={}^{b}T_{p}C$, 		
		and			using	in the last equality that $V$ is a Lagrangian subbundle of $(G^{\omega},\omega)$. This finishes the proof of Claim 2.	\hfill $\bigtriangleup$
		
		\vspace{0.4cm}
		\noindent
Applying Proposition \ref{moser} (relative $b$-Moser) yields a $b$-diffeomorphism $\rho$, defined on a neighborhood of $C\subset (M,Z)$, such that ${}^{b}\rho^{*}\left({}^{b}(\psi\circ\phi^{-1})^{*}\Omega_{G}\right)=\omega$ and $\rho|_{C}=\text{Id}_{C}$. So setting $\tau:=\psi\circ\phi^{-1}\circ\rho$ finishes the proof.
	\end{proof}

\section{Strong $b$-coisotropic submanifolds and  $b$-symplectic reduction}\label{sec:strongbcoiso}

We consider a subclass of $b$-coisotropic submanifolds in  $b$-symplectic manifolds, namely, the coisotropic submanifolds that are transverse to the symplectic leaves they meet. The main observation is that their characteristic distribution has constant rank, and the quotient  (whenever smooth) by this distribution inherits a $b$-symplectic form (Proposition \ref{prop:bcoisored}).

\subsection{Strong $b$-coisotropic submanifolds}
\leavevmode
\vspace{0.15cm}

In Subsection \ref{bcoiso} we have seen that a $b$-coisotropic submanifold $C\subset(M,Z,\omega)$ comes with a  characteristic distribution 
$$D:=\rho\left({}^{b}TC^{\omega}\right)=\Pi^{\sharp}\left(TC^{0}\right).$$ In general, $D$ fails to be regular. To force that $D$ has constant rank, we have to impose a condition on $C$ that is stronger than $b$-coisotropicity.

\begin{defi}\label{def:strongbcoiso}
A submanifold $C$ of a log-symplectic manifold $(M,Z,\Pi)$ is called \emph{strong $b$-coisotropic} if it is  coisotropic (with respect to $\Pi$) and transverse to all the symplectic leaves of $ (M,\Pi)$ it meets.
\end{defi}

To justify this definition, we note that
\begin{align}\label{annihilator}
\left.\Pi_{p}^{\sharp}\right|_{T_{p}C^{0}}\ \text{is injective}&\Leftrightarrow \text{Ker}\left(\Pi_{p}^{\sharp}\right)\cap T_{p}C^{0}=\{0\}\nonumber\\
&\Leftrightarrow T_{p}\mathcal{O}^{0}\cap T_{p}C^{0}=\{0\}\nonumber\\
&\nonumber\Leftrightarrow \left(T_{p}\mathcal{O}+T_{p}C\right)^{0}=\{0\}\\ \
&\Leftrightarrow T_{p}\mathcal{O}+T_{p}C=T_{p}M,
\end{align}
where $\mathcal{O}$ denotes the symplectic leaf through $p$. The last equation is exactly the transversality condition of Definition \ref{def:strongbcoiso}. Consequently, we have:

\begin{prop}\label{prop:strongrank}
{Let $C\subset (M,Z,\Pi)$ be a coisotropic submanifold. Then $C$ is
strong $b$-coisotropic if{f} the  characteristic distribution of $C$ is regular, with rank equal to $codim(C)$.}
\end{prop}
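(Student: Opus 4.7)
The plan is to observe that the equivalences displayed in equation \eqref{annihilator} already do all the work; what remains is a single pointwise dimension count, applied uniformly over $C$.

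First, I would fix a point $p \in C$ and examine the linear map $\Pi^{\sharp}_p\colon T_pC^0 \to T_pC$ whose image is $D_p$. Since $\dim T_pC^0 = \mathrm{codim}(C)$, the rank--nullity theorem gives
\[
\dim D_p \;=\; \mathrm{codim}(C) \;-\; \dim\bigl(\mathrm{Ker}(\Pi^{\sharp}_p)\cap T_pC^0\bigr),
\]
so $\dim D_p \le \mathrm{codim}(C)$ at every point, with equality if and only if $\Pi^{\sharp}_p|_{T_pC^0}$ is injective.

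Next I would invoke the chain of equivalences in \eqref{annihilator} (already established just above the statement): injectivity of $\Pi^{\sharp}_p|_{T_pC^0}$ is equivalent to $T_pC + T_p\mathcal{O} = T_pM$, where $\mathcal{O}$ is the symplectic leaf through $p$. In other words, the maximal possible rank is attained at $p$ precisely when $C$ is transverse at $p$ to the symplectic leaf through $p$.

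Stringing these two facts together and quantifying over $p\in C$ gives the result: $C$ is strong $b$-coisotropic iff $T_pC+T_p\mathcal{O}=T_pM$ at every $p\in C$ iff $\dim D_p=\mathrm{codim}(C)$ at every $p\in C$. Since the pointwise rank is bounded above by $\mathrm{codim}(C)$ everywhere, the last condition is equivalent to $D$ being a regular distribution of rank equal to $\mathrm{codim}(C)$. There is no real obstacle here; the only thing to be careful about is that, \emph{a priori}, $D$ is only a (possibly singular) distribution, but the uniform attainment of the maximal rank automatically upgrades it to a genuine vector subbundle of $TC$.
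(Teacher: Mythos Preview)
Your proof is correct and follows essentially the same approach as the paper: the paper derives the proposition directly from the chain of equivalences \eqref{annihilator}, and you have simply made the implicit dimension count explicit via rank--nullity. The only (minor) difference is that the paper leaves the step ``$\Pi^{\sharp}_p|_{T_pC^0}$ injective $\Leftrightarrow \dim D_p = \mathrm{codim}(C)$'' to the reader, whereas you spell it out.
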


Lemma \ref{lem:twochar} immediately implies:

\begin{cor}\label{cor:D}
Let $C\subset (M,Z,\omega)$ be
strong $b$-coisotropic. Then its characteristic distribution is tangent to $Z$,
and corresponds to ${}^{b}TC^{\omega} $ under the bijection of Lemma \ref{lem:distrsplit} b).
\end{cor}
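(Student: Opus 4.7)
The plan is to reduce the claim to the bijection of Lemma \ref{lem:distrsplit} b), applied to the $b$-manifold $(C, C\cap Z)$ with its anchor $\tilde{\rho}\colon{}^{b}TC\rightarrow TC$. That bijection identifies distributions on $C$ tangent to $C\cap Z$ with subbundles of ${}^{b}TC$ intersecting $\ker(\tilde{\rho})$ trivially, and its inverse is $D'\mapsto\tilde{\rho}(D')$. Since Lemma \ref{lem:twochar} already gives $\rho(({}^{b}TC)^{\omega}) = D$, it will suffice to verify the two hypotheses of the bijection and to match ranks.

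First I would check that $D$ is tangent to $C\cap Z$. At a point $p\in C\cap Z$, any vector in $D_{p}$ has the form $\Pi_{p}^{\sharp}(\alpha)$ with $\alpha\in T_{p}C^{0}$; it lies in $T_{p}Z$ because $Z$ is a Poisson submanifold of $(M,\Pi)$, and it lies in $T_{p}C$ by coisotropicity. Transversality $C\pitchfork Z$ then gives $D_{p}\subset T_{p}C\cap T_{p}Z = T_{p}(C\cap Z)$. By Proposition \ref{prop:strongrank}, $D$ is a genuine regular distribution of rank $\mathrm{codim}(C)$, so the first set of hypotheses is met.

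Next I would verify that $({}^{b}TC)^{\omega}$ is a subbundle of ${}^{b}TC$ meeting $\ker(\tilde{\rho})$ only at zero. The containment $({}^{b}TC)^{\omega}\subset{}^{b}TC$ is Proposition \ref{distribution}. For the trivial intersection, I would run a rank count: $({}^{b}TC)^{\omega}$ has rank $\mathrm{codim}(C)$ as the $\omega$-orthogonal of ${}^{b}TC$ in the symplectic bundle $({}^{b}TM|_{C},\omega)$, and this matches the rank of $D$. Since $\tilde{\rho}$ (the anchor of ${}^{b}TC$, which coincides with $\rho$ after the canonical identification of ${}^{b}TC$ with a Lie subalgebroid of ${}^{b}TM$) already surjects $({}^{b}TC)^{\omega}$ onto $D$, equality of ranks forces $\tilde{\rho}$ to be fiberwise injective on $({}^{b}TC)^{\omega}$, i.e.\ $({}^{b}TC)^{\omega}\cap\ker(\tilde{\rho}) = 0$.

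Applying Lemma \ref{lem:distrsplit} b) to $(C,C\cap Z)$ now matches $({}^{b}TC)^{\omega}$ with the distribution $\tilde{\rho}(({}^{b}TC)^{\omega}) = D$, which is the claimed correspondence. The whole argument is essentially rank bookkeeping and poses no real obstacle; the only mildly delicate point is to note that, under the inclusion ${}^{b}TC\hookrightarrow{}^{b}TM$, the anchor $\tilde{\rho}$ agrees with the restriction of $\rho$, so that Lemma \ref{lem:twochar} can indeed be read inside the subalgebroid ${}^{b}TC$.
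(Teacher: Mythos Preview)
Your argument is correct and follows the same route as the paper, which simply says the corollary is an immediate consequence of Lemma~\ref{lem:twochar}. You have unpacked this: from $\rho(({}^{b}TC)^{\omega})=D$ and the rank equality $\mathrm{rk}\,({}^{b}TC)^{\omega}=\mathrm{codim}(C)=\mathrm{rk}\,D$ (Proposition~\ref{prop:strongrank}), the restriction of the anchor to $({}^{b}TC)^{\omega}$ is a fiberwise isomorphism, so $({}^{b}TC)^{\omega}\cap\ker(\tilde\rho)=0$ and Lemma~\ref{lem:distrsplit}~b) applies. One minor remark: you argue that $D$ is tangent to $Z$ via the fact that $Z$ is a Poisson submanifold, whereas the more direct route implicit in the paper is that $D_{p}=\rho_{p}\big(({}^{b}T_{p}C)^{\omega}\big)\subset\rho_{p}({}^{b}T_{p}M)=T_{p}Z$ for $p\in C\cap Z$; both are fine.
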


\begin{remark}
If $C$ is a strong $b$-coisotropic submanifold of $(M^{2n},Z,\Pi)$ {intersecting $Z$}, then necessarily $dim(C)\geq n+1$. 
Indeed, if $\mathcal{O}$ denotes the symplectic leaf through $p\in C\cap Z$, then we have
\begin{align*}
\dim(C)&=\dim\left(T_{p}\mathcal{O}+T_{p}C\right)+\dim\left(T_{p}\mathcal{O}\cap T_{p}C\right)-\dim(\mathcal{O})\\
&=\dim\left(T_{p}\mathcal{O}\cap T_{p}C\right)+2\\
&\geq n+1,
\end{align*}
where the last inequality holds since $T_{p}\mathcal{O}\cap T_{p}C$ is a coisotropic subspace of  {the $(2n-2)$-dimensional vector space} $T_{p}\mathcal{O}$.
Alternatively, one can observe that a middle-dimensional $b$-coisotropic submanifold $C^{n}\subset (M^{2n},Z,\omega)$ is $b$-Lagrangian (i.e. ${}^{b}TC^{\omega}={}^{b}TC$). Its characteristic distribution satisfies
\[
\dim\left(D_{p}\right)=\begin{cases}
\dim(C)-1\hspace{1.2cm} \text{if}\ p\in C\cap Z\\
\dim(C) \hspace{1.83cm} \text{else}
\end{cases},
\]
so that $C$ cannot be strong $b$-coisotropic
{whenever it intersects $Z$, due to Proposition \ref{prop:strongrank}.}
\end{remark}

\subsection{Coisotropic reduction in $b$-symplectic geometry}
\leavevmode
\vspace{0.15cm}

In this subsection we  adapt coisotropic 
reduction to the $b$-symplectic category.
It is well-known that, given a coisotropic submanifold $C$ of a Poisson manifold $M$, its quotient $\underline{C}$ by the characteristic distribution is again a Poisson manifold, provided it is smooth. More precisely, the vanishing ideal
$\mathcal{I}_{C}$ is a Poisson subalgebra of $\left(C^{\infty}(M),\{\cdot,\cdot\}\right)$, and denoting by $\mathcal{N}(\mathcal{I}_{C}):=\{f\in C^{\infty}(M): \{f,\mathcal{I}_{C}\}\subset \mathcal{I}_{C}\}$ its Poisson normalizer, we have that $\mathcal{N}(\mathcal{I}_{C})/\mathcal{I}_{C}$ is a Poisson algebra. As an algebra it is canonically isomorphic to the algebra of smooth functions on the quotient $\underline{C}$, so it endows the latter with a Poisson structure, called the \emph{reduced Poisson structure}.

\begin{remark}\label{rem:samered}
When the Poisson structure on $M$ is non-degenerate, i.e. corresponds to a symplectic form $\omega\in \Omega^2(M)$, the reduced Poisson structure on $\underline{C}$ is also non-degenerate. Indeed \cite{Redsingmomap}, it corresponds to the symplectic form $\omega_{red}$ on $\underline{C}$ obtained by symplectic coisotropic reduction, i.e. the unique one that satisfies $q^*\omega_{red}=i^*\omega$, where $q\colon C\to \underline{C}$ is the projection and $i\colon C\to M$ is the inclusion. 
\end{remark}

 \begin{prop}[Coisotropic reduction]\label{prop:bcoisored}
Let $C$ be a strong $b$-coisotropic submanifold of a $b$-symplectic manifold $(M,Z,\omega,\Pi)$. Then
$D:=\Pi^{\sharp}\left(TC^{0}\right)$ is a (constant rank) involutive distribution on $C$. 
Assume that $\underline{C}:=C/D$ has a smooth manifold structure, such that the projection $q:C\rightarrow\underline{C}$ is a submersion.  
Then $\underline{C}$ inherits a $b$-symplectic structure $\underline{\Omega}$, determined by 
\begin{equation}\label{redform} 
{}^{b}q^{*}\underline{\Omega}={}^{b}i^{*}\omega,
\end{equation} 
where $i:C\hookrightarrow M$ is the inclusion. Its corresponding log-symplectic structure is exactly
the reduced Poisson structure on  
$\underline{C}$ obtained
from $\Pi$.
\end{prop}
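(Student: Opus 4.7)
The plan is to establish the proposition in three stages: first, build a $b$-manifold structure on $\underline{C}$ compatible with the quotient; second, show that $^bi^*\omega$ descends to a well-defined $b$-symplectic form $\underline{\Omega}$; third, identify the resulting log-symplectic Poisson structure with the reduced one.

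I would begin with the preliminary setup. By Proposition \ref{prop:strongrank}, the characteristic distribution $D$ has constant rank $\operatorname{codim}(C)$. Involutivity is standard: for $f,g\in\mathcal{I}_C$, coisotropicity yields $\{f,g\}\in\mathcal{I}_C$, hence $[X_f,X_g]=X_{\{f,g\}}$ is a section of $D$. By Corollary \ref{cor:D}, $D$ is tangent to $Z$, so $C\cap Z$ is $D$-saturated; hence $\underline{Z}:=q(C\cap Z)$ is a smooth codimension-one submanifold of $\underline{C}$, endowing $(\underline{C},\underline{Z})$ with a $b$-manifold structure. Since $q^{-1}(\underline{Z})=C\cap Z$ and $q$ is a submersion, the map $q\colon(C,C\cap Z)\to(\underline{C},\underline{Z})$ is a submersive $b$-map.

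For the construction of $\underline{\Omega}$, the key point is that $^bi^*\omega$ is \emph{basic} with respect to $q$. Its pointwise kernel at $p\in C$ is tautologically $^bT_pC^\omega$, which equals $\sigma(D_p)$ by Corollary \ref{cor:D} and $\ker(^bq_*)_p$ by Corollary \ref{split}. Moreover, for any section $X\in\Gamma(^bTC^\omega)$ we have $\iota_X(^bi^*\omega)=0$ and $^bd(^bi^*\omega)={}^bi^*({}^bd\omega)=0$, so Cartan's formula yields $\pounds_X(^bi^*\omega)=0$. These two properties produce a unique $b$-two-form $\underline{\Omega}\in{}^b\Omega^2(\underline{C})$ satisfying \eqref{redform}; smoothness follows from the usual descent argument using local sections of $q$ adapted to $\underline{Z}$. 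Closedness is automatic: since $^bq^*$ is injective on $b$-forms (as $q$ is a surjective submersion) and $^bq^*({}^bd\underline{\Omega})={}^bd(^bi^*\omega)=0$, we get $^bd\underline{\Omega}=0$. Non-degeneracy at $\underline{p}=q(p)$ is immediate because $(^bi^*\omega)_p$ factors through the isomorphism $^bT_pC/^bT_pC^\omega\xrightarrow{\sim}{}^bT_{\underline p}\underline{C}$ induced by $^bq_*$.

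Finally, to identify $\underline{\Omega}$ with the reduced Poisson structure of $\Pi$, I would restrict to the dense open set $\underline{C}\setminus\underline{Z}$. There $\underline{\Omega}$ arises from classical symplectic coisotropic reduction of $(M\setminus Z,\omega)$ along $C\setminus(C\cap Z)$, so by Remark \ref{rem:samered} its associated Poisson structure matches the reduced Poisson bracket. Both Poisson tensors on $\underline{C}$ are smooth and agree on a dense subset, hence coincide everywhere. The principal obstacle I foresee is verifying smoothness of $\underline{\Omega}$ across $\underline{Z}$: although the pointwise definition is clear, one must check that local sections of $q$ respecting the $b$-structures produce a genuine smooth $b$-two-form on all of $\underline{C}$, which is ultimately guaranteed by $D$-saturation of $C\cap Z$ and the compatibility of anchors encoded in Corollary \ref{split}.
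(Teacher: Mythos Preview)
Your proposal is correct and follows essentially the same route as the paper's proof: both establish constant rank and involutivity of $D$ in the same way, use Corollary~\ref{cor:D} to see $D$ is tangent to $Z$ and hence that $(\underline{C},\underline{Z})$ is a $b$-manifold with $q$ a $b$-map, invoke Corollary~\ref{split} together with Corollary~\ref{cor:D} to identify $\ker({}^{b}q_{*})=({}^{b}TC)^{\omega}$, and finish by comparing with symplectic reduction on the dense complement of $\underline{Z}$ via Remark~\ref{rem:samered} and continuity. The only cosmetic difference is that you phrase the construction of $\underline{\Omega}$ as descent of a basic $b$-form (checking $\iota_{X}$ and $\pounds_{X}$ vanish), whereas the paper builds $\underline{\Omega}$ directly by pulling back ${}^{b}i_{S}^{*}\omega_{C}$ along a local section $\tau\colon U\to S$ of $q$; these are equivalent formulations, and in fact the local-section argument is precisely what resolves the smoothness concern you flag at the end.
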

\begin{proof}
We know that $D$ has constant rank, by Proposition \ref{prop:strongrank}. As for involutivity, first note that $D$ is generated by Hamiltonians $\left.X_{h}\right|_{C}$ of functions $h\in\mathcal{I}_{C}$. On such generators, we have
\[
\left[\left.X_{h_{1}}\right|_{C},\left.X_{h_{2}}\right|_{C}\right]=\left.\left[X_{h_{1}},X_{h_{2}}\right]\right|_{C}=\left.X_{\{h_{1},h_{2}\}}\right|_{C},
\]
where $\{h_{1},h_{2}\}\in\mathcal{I}_{C}$ due to coisotropicity of $C$. Hence $D$ is involutive.

The quotient $\underline{C\cap Z}:=\left(C\cap Z\right)/D$ {is a smooth submanifold of $\underline{C}$, since for every slice $S$ in $C$ transverse to $D$, the intersection $S\cap Z$ is  a smooth slice in $C\cap Z$ transverse to $D$.
The leaf space   $(\underline{C},\underline{C\cap Z})$ is  a $b$-manifold}, and the projection $q:(C,C\cap Z)\rightarrow(\underline{C},\underline{C\cap Z})$ is a $b$-map. For $p\in C$, we have an exact sequence
\begin{equation*} 
0\rightarrow D_{p}\hookrightarrow T_{p}C\overset{(q_{*})_{p}}{\longrightarrow} T_{q(p)}\underline{C}\rightarrow 0,
\end{equation*}
which corresponds with an exact sequence on the level of $b$-tangent spaces
\begin{equation}\label{exactsequence} 0\rightarrow\left({}^{b}T_{p}C\right)^{\omega_{p}}\hookrightarrow{}^{b}T_{p}C\overset{\left({}^{b}q_{*}\right)_{p}}{\longrightarrow}{}^{b}T_{q(p)}\underline{C}\rightarrow 0.
\end{equation}
To see this, consider the canonical splitting $\sigma:D\rightarrow{}^{b}TC$ of the anchor $\rho:{}^{b}TC\rightarrow TC$, as constructed in Lemma \ref{lem:distrsplit} $a)$, 
and notice that 
\[
\text{Ker}\left(\left({}^{b}q_{*}\right)_{p}\right)=\sigma\left(\text{Ker}\left(q_{*}\right)_{p}\right)=\sigma(D_{p})=\left({}^{b}T_{p}C\right)^{\omega_{p}},
\]
where the first equality holds by Corollary \ref{split} and the third by Corollary \ref{cor:D}.
 
Since $q$ is a surjective submersion, it admits sections, hence for every sufficiently small open subset $U\subset \underline{C}$ there is a submanifold $S\subset C$ transverse to $D$ such that $q|_{S}\colon S\to U$ is a diffeomorphism.
At points $p\in S$ we have	
$$	{}^{b}T_{p}C
	=\left({}^{b}T_{p}C\right)^{\omega_{p}}\oplus{}^{b}T_{p}S
	$$
due to the sequence \eqref{exactsequence}. This implies that ${}^{b}i_{S}^{*}\omega_{C}$ is a $b$-symplectic form on $S$, where $i_{S}:S\hookrightarrow C$ is the inclusion and $\omega_{C}$  is the restriction of $\omega$ to $C$.
 Denote by $\tau:U\rightarrow S$ the inverse of $q|_{S}:S\rightarrow U$.
Then $\underline{\Omega}:={}^{b}\tau^{*}\left({}^{b}i_{S}^{*}\omega_{C}\right)$ is $b$-symplectic on $U$.
 Away from $\underline{C\cap Z}$,
	this $b$-2-form  agrees with the symplectic form obtained  by symplectic coisotropic reduction from $\omega|_{M\setminus Z}$.
	Denote by $-\underline{\Omega}^{-1}$ the non-degenerate $b$-bivector on $U$ corresponding  to $\underline{\Omega}$. 
	Away from $\underline{C\cap Z}$, the log-symplectic structure $\underline{\rho}(-\underline{\Omega}^{-1})$
	agrees with the reduced Poisson structure, by Remark \ref{rem:samered}. By continuity, the same is true on the whole of $U$. As $U$ was arbitrary, the reduced
	Poisson structure on $\underline{C}$ is log-symplectic, and the above reasoning shows that the corresponding $b$-symplectic form   satisfies equation \eqref{redform}.
\end{proof}

\begin{exs}
\begin{enumerate}[a)]
\item Let $i\colon B\hookrightarrow (M,Z)$ be a $b$-submanifold.
A quick check in coordinates shows\footnote{The converse is also true. If $^bT^*M|_B$ is strong $b$-coisotropic in $^bT^*M$, then $^bT^*M|_B$ is transverse to $^bT^*M|_Z$, which implies that $B$ is transverse to $Z$, i.e. that $B$ is a $b$-submanifold.} that $^bT^*M|_B$ is strong $b$-coisotropic in $^bT^*M$. Its quotient $\underline{^bT^*M|_B}$ is canonically $b$-symplectomorphic to ${}^{b}T^{*}B$. To see this, consider the surjective submersion
\[
\varphi:{}^{b}T^*M|_B\rightarrow{}^{b}T^{*}B:\alpha_{p}\mapsto \left({}^{b}i_{*}\right)_{p}^{*}\alpha_{p}
\]
and notice that the fibers of $\varphi$ coincide with the leaves of the characteristic distribution on $^bT^*M|_B$. So we get a $b$-diffeomorphism $\overline{\varphi}:\underline{^bT^*M|_B}\rightarrow{}^{b}T^{*}B$. To see that this is in fact a $b$-symplectomorphism, we note that the tautological $b$-one-forms on ${}^{b}T^{*}M$ and ${}^{b}T^{*}B$ are related by 
\begin{equation}\label{taut}
{}^{b}\varphi^{*}\theta_{B}={}^{b}j^{*}\theta_{M},
\end{equation}
where $j:^bT^*M|_B\hookrightarrow{}^{b}T^{*}M$ is the inclusion. Recall that the $b$-symplectic form $\underline{\Omega}$ on $\underline{^bT^*M|_B}$ is determined by the relation ${}^{b}q^{*}\underline{\Omega}={}^{b}j^{*}\omega_{M}$, where $q:^bT^*M|_B\rightarrow \underline{^bT^*M|_B}$ is the projection (cf. \eqref{redform}). Hence to conclude that $\overline{\varphi}$ is $b$-symplectic, we have to show that ${}^{b}q^{*}\left({}^{b}\overline{\varphi}^{*}\omega_{B}\right)={}^{b}j^{*}\omega_{M}$. But this is immediate from \eqref{taut} since $\overline{\varphi}\circ q=\varphi$.

\item Given a $b$-manifold $(M,Z)$, let $K$ be a distribution on $M$ tangent to $Z$.  
{Thanks to Lemma \ref{lem:distrsplit} a) we can}
view $K$ as a subbundle $\sigma(K)$ of $^bTM$. Its annihilator $\sigma(K)^{0}$ is strong $b$- coisotropic in $^bT^*M$, and the quotient $\underline{\sigma(K)^{0}}$ is $^bT^*(M/K)$, whenever $M/K$ is smooth. We give a proof of this fact in the particular case of a Hamiltonian group action, see Corollary \ref{ex:bT*group}.
\end{enumerate}
\end{exs}  

\subsection{Moment map reduction in $b$-symplectic geometry}
\leavevmode
\vspace{0.15cm}

Recall that, given an action of a Lie group $G$ on a Poisson manifold $(M,\Pi)$, a  \emph{moment map} is a Poisson map $J:M\rightarrow\mathfrak{g}^{*}$ satisfying
\begin{equation}\label{moment}
\Pi^{\sharp}\left(dJ^{x}\right)=v_{x}\hspace{0.5cm}\forall x\in\mathfrak{g}.
\end{equation}
Here $J^{x}\colon M\rightarrow\mathbb{R}: p\mapsto\langle J(p),x\rangle$ is the $x$-component of $J$, the vector field $v_{x}$ is the infinitesimal generator of the action corresponding with $x\in\mathfrak{g}$, i.e.
\[
{v_{x}(p)=\left.\frac{d}{dt}\right|_{t=0}\exp(-tx)\cdot p,}
\]
 and $\mathfrak{g}^{*}$ is endowed with its canonical Lie-Poisson structure \cite[Section 3]{models}.
A $G$-equivariant map $J:M\rightarrow\mathfrak{g}^{*}$ satisfying \eqref{moment} is automatically Poisson {\cite[Proposition 7.30]{vaisman}}.

In view of Proposition \ref{prop:bcoisored}, we recall a general fact about equivariant moment maps.
 
\begin{lemma}\label{poismoment} 
Let $G$ be a Lie group acting on a Poisson manifold $(M,\Pi)$ with  equivariant moment map $J:M\rightarrow\mathfrak{g}^{*}$. Assume the action is free on $J^{-1}(0)$. Then
\begin{enumerate}[a)]
	\item $J^{-1}(0)$ is a coisotropic submanifold of $(M,\Pi)$.
	\item $J^{-1}(0)$ is transverse to all symplectic leaves of $(M,\Pi)$ it meets. 
	\item the characteristic distribution $\Pi^{\sharp}\left(T(J^{-1}(0))^{0}\right)$ on $J^{-1}(0)$ coincides with the tangent distribution to the orbits of $G\curvearrowright J^{-1}(0)$.
\end{enumerate}
\end{lemma}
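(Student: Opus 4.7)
The plan is to leverage the moment map equation \eqref{moment} together with $G$-equivariance and freeness of the action to obtain an explicit description of the conormal $T_p(J^{-1}(0))^{0}$ at each $p\in J^{-1}(0)$, from which all three items will follow formally.

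The key observation is that the moment map condition asserts that the infinitesimal action $\mathfrak{g}\to T_pM$, $x\mapsto v_x(p)$, factors as the composition
\[
\mathfrak{g}\xrightarrow{(dJ_p)^{*}} T_p^{*}M \xrightarrow{\Pi_p^{\sharp}} T_pM,
\]
where $(dJ_p)^{*}x = dJ^x_p$ under the canonical identification $\mathfrak{g}\cong (\mathfrak{g}^{*})^{*}$. Freeness of the action on $J^{-1}(0)$ makes the infinitesimal action injective at every $p\in J^{-1}(0)$, hence $(dJ_p)^{*}$ is injective, which is equivalent to $dJ_p$ being surjective. Therefore $0$ is a regular value of $J$, so $J^{-1}(0)$ is a submanifold with
\[
T_p(J^{-1}(0))^{0} = \text{Im}\bigl((dJ_p)^{*}\bigr) = \{dJ^x_p : x\in\mathfrak{g}\}.
\]

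From this, item (c) is immediate by applying $\Pi_p^{\sharp}$ and invoking the moment map equation to get $\Pi_p^{\sharp}\bigl(T_p(J^{-1}(0))^{0}\bigr) = \{v_x(p) : x\in\mathfrak{g}\} = T_p(G\cdot p)$. Item (a) then follows from (c) together with $G$-equivariance: since $0\in\mathfrak{g}^{*}$ is fixed by the coadjoint action, $J^{-1}(0)$ is $G$-invariant, so each $v_x$ is tangent to $J^{-1}(0)$ and hence so is the characteristic distribution. For item (b), the chain of equivalences in \eqref{annihilator} reduces transversality to a symplectic leaf through $p$ to the injectivity of $\Pi^{\sharp}_p$ restricted to $T_p(J^{-1}(0))^{0}$; and if $\Pi^{\sharp}_p(dJ^x_p)=0$, then $v_x(p)=0$, forcing $x=0$ by freeness and therefore $dJ^x_p=0$. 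I do not anticipate a real obstacle here: the argument is a direct unfolding of the moment map equation, the one slightly subtle ingredient being the dualization step that simultaneously shows $0$ is a regular value and identifies the conormal as the image of $(dJ_p)^{*}$.
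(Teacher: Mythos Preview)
Your proof is correct and follows essentially the same outline as the paper's: identify the conormal $T_p(J^{-1}(0))^0$ with $\{dJ^x_p : x\in\mathfrak{g}\}$ via regularity of $0$, then read off all three items from the moment map equation and freeness. Your factorization $\mathfrak{g}\xrightarrow{(dJ_p)^*}T_p^*M\xrightarrow{\Pi_p^\sharp}T_pM$ gives a slightly cleaner route to the regularity of $0$ than the paper's annihilator argument, but it is the same idea.

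The one genuine difference is in item (a). The paper deduces coisotropicity from the general fact that the preimage of a symplectic leaf under a Poisson map is coisotropic (applied to the leaf $\{0\}\subset\mathfrak{g}^*$), thereby using that $J$ is Poisson. You instead deduce (a) from (c) together with $G$-equivariance: since $0$ is coadjoint-fixed, $J^{-1}(0)$ is $G$-invariant and the characteristic distribution (which by (c) is the orbit distribution) is tangent. Both routes are valid under the hypotheses; yours avoids invoking the Poisson property of $J$ and keeps the argument self-contained, while the paper's route highlights the r\^ole of $J$ as a Poisson map.
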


\begin{remark}  (i) When $(M,\Pi)$ is a  log-symplectic manifold, Lemma \ref{poismoment} implies that the level set  $J^{-1}(0)$
is a strong $b$-coisotropic submanifold.

(ii) When $G$ a torus, there is a more flexible notion of moment map \cite[Definition 22]{ToricB} for log-symplectic manifolds. The smooth level sets of such moment maps are not strong $b$-coisotropic submanifolds in general. Indeed they can even fail to be transverse to the degeneracy locus $Z$ (see  \cite[Example 23]{ToricB} for an instance where $Z$ itself is such a level set).  
 
\end{remark}

For the sake of  for completeness we provide a proof of Lemma \ref{poismoment}. Items a) and c) also follow from well-known facts in symplectic geometry, by restricting the $G$-action to each symplectic leaf {(whenever $G$ is connected) and using item b).}

\begin{proof}
\begin{enumerate}[a)]
\item We show that $0$ is a regular value of $J$. Choosing $p\in J^{-1}(0)$, it is enough to prove that the restriction $d_{p}J:\text{Im}(\Pi_{p}^{\sharp})\subset T_{p}M\rightarrow\mathfrak{g}^{*}$ is surjective. To this end, assume that $\xi\in\mathfrak{g}$ annihilates $d_{p}J(\text{Im}\big(\Pi_{p}^{\sharp})\big)$. We then get for all $\alpha\in T_{p}^{*}M$ that
\[
\left\langle\alpha,(v_{\xi})_{p}\right\rangle=\left\langle\alpha,\Pi_{p}^{\sharp}\left(d_{p}J^{\xi}\right)\right\rangle=-\left\langle d_{p}J^{\xi},\Pi_{p}^{\sharp}(\alpha)\right\rangle=-\left\langle d_{p}J\left(\Pi_{p}^{\sharp}(\alpha)\right),\xi\right\rangle=0,
\]
and therefore $\left(v_{\xi}\right)_{p}=0$. Since the action $G\curvearrowright J^{-1}(0)$ is free, this implies that $\xi=0$. It follows that $d_{p}J\big(\text{Im}(\Pi_{p}^{\sharp})\big)=\mathfrak{g}^{*}$, so $0$ is indeed a regular value of $J$. In particular, $J^{-1}(0)$ is a submanifold of $M$. The coisotropicity of $J^{-1}(0)$ follows since it is the preimage of a symplectic leaf $\{0\}\subset\mathfrak{g}^{*}$ under a Poisson map.
\item Let $\mathcal{O}$ denote the symplectic leaf through $p\in J^{-1}(0)$. By the computation \eqref{annihilator}, it suffices to prove that $\left.\Pi_{p}^{\sharp}\right|_{\left[T_{p}J^{-1}(0)\right]^{0}}$ is injective. Since $0$ is a regular value, this annihilator is given by
$\left[T_{p}J^{-1}(0)\right]^{0}\nonumber
=\left\{ d_{p}J^{x}: x\in\mathfrak{g}\right\}.$
We now have a composition of maps
\begin{align*}
&\mathfrak{g}\longrightarrow \left[T_{p}J^{-1}(0)\right]^{0} \longrightarrow\Pi_{p}^{\sharp}\left(\left[T_{p}J^{-1}(0)\right]^{0}\right)\\
&x\hspace{0.15cm}\mapsto\hspace{0.3cm} d_{p}J^{x}\hspace{0.2cm}\mapsto\hspace{0.2cm}\Pi_{p}^{\sharp}\left(d_{p}J^{x}\right)=\left(v_{x}\right)_{p},
\end{align*}
that is injective by freeness of $G\curvearrowright J^{-1}(0)$. In particular, $\left.\Pi_{p}^{\sharp}\right|_{\left[T_{p}J^{-1}(0)\right]^{0}}$ is injective.
\item 
We have
\[
\Pi_{p}^{\sharp}\left(\left[T_{p}J^{-1}(0)\right]^{0}\right)=\left\{ \Pi_{p}^{\sharp}\left(d_{p}J^{x}\right): x\in\mathfrak{g}\right\}=\left\{ (v_{x})_{p}: x\in\mathfrak{g}\right\},
\]
which is exactly the tangent space of the $G$-orbit through $p$.
\end{enumerate}
\end{proof}

Combining Proposition \ref{prop:bcoisored} with Lemma \ref{poismoment}, 
we obtain a moment map reduction statement in the $b$-symplectic category. The 
  case $G=S^{1}$ was already addressed in \cite[Proposition 7.8]{gualtieri}.
\begin{cor}[Moment map reduction]\label{reduction}
Consider an action of a {connected} Lie group $G$ on a $b$-symplectic manifold $(M,Z,\Pi)$ with equivariant  moment map $J\colon M\to \g^*$. Assume the action is free and proper on $J^{-1}(0)$. Then $J^{-1}(0)$ is a strong $b$-coisotropic submanifold, and its reduction $J^{-1}(0)/G$ is $b$-symplectic.
\end{cor}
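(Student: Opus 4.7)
The plan is to deduce this corollary by combining Lemma \ref{poismoment} with Proposition \ref{prop:bcoisored}, so the work is essentially bookkeeping. First I would apply Lemma \ref{poismoment}: parts a) and b) give that $J^{-1}(0)$ is a submanifold which is coisotropic in $(M,\Pi)$ and transverse to every symplectic leaf it meets, which by Definition \ref{def:strongbcoiso} is precisely the statement that $J^{-1}(0)$ is strong $b$-coisotropic. Part c) identifies the characteristic distribution $D=\Pi^{\sharp}(T(J^{-1}(0))^{0})$ with the tangent distribution of the $G$-orbits on $J^{-1}(0)$.

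Next I would pass to the quotient. Because the action of the connected Lie group $G$ is free and proper on $J^{-1}(0)$, standard quotient theory gives that $J^{-1}(0)/G$ is a smooth manifold and that the orbit projection $q\colon J^{-1}(0)\to J^{-1}(0)/G$ is a surjective submersion. Since $G$ is connected, the $G$-orbits are exactly the leaves of the orbit tangent distribution, which by Lemma \ref{poismoment} c) are exactly the leaves of $D$. Hence $J^{-1}(0)/G$ is canonically identified with the leaf space $C/D$ of Proposition \ref{prop:bcoisored}.

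All hypotheses of Proposition \ref{prop:bcoisored} are then met, so it yields a $b$-symplectic form $\underline{\Omega}$ on $J^{-1}(0)/G$, characterized by ${}^{b}q^{*}\underline{\Omega}={}^{b}i^{*}\omega$, whose associated log-symplectic bivector is the reduced Poisson structure from $\Pi$. I expect no real obstacle; the only mild point to mention is the identification of $G$-orbits with leaves of $D$, which is where connectedness of $G$ enters.
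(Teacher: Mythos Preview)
Your proposal is correct and matches the paper's approach exactly: the paper does not give a separate proof of this corollary, but simply states that it is obtained by combining Proposition \ref{prop:bcoisored} with Lemma \ref{poismoment}. Your write-up fills in precisely the expected details, including the role of connectedness of $G$ in identifying the $G$-orbits with the leaves of the characteristic distribution.
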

 
 \begin{remark}
The fact that $J^{-1}(0)/G$ is $b$-symplectic follows already  from \cite[Theorem 3.11]{MarreroPadronOlmos}, taking $A={}^{b}TM$ there. (The hypothesis made there, that $(J_*)_x\circ \rho_x\colon {}^{b}T_xM\to \g^*$ has contant rank for all $x\in J^{-1}(0)$, is satisfied since $J^{-1}(0)$ is transverse to $Z$). In that reference the authors develop a reduction theory for level sets of arbitrary regular values $\mu\in \g^*$ satisfying the contant rank hypothesis, their statement is thus more general than the reduction statement in our Corollary \ref{reduction}.
\end{remark}

\subsubsection{\underline{Exact $b$-symplectic forms}}
\leavevmode
\vspace{0.15cm}

{As a particular case of the previous construction, we consider the $b$-symplectic analog of a well-known fact in symplectic geometry. Recall that, if a Lie group $G$ acts on an exact symplectic manifold $(M,-d\theta)$ and $\theta$ is invariant under the action, then $J:M\rightarrow\mathfrak{g}^{*}$ defined by
\begin{equation}\label{moment map}
J^{x}=-\iota_{v_{x}}\theta
\end{equation}
is an equivariant moment map for the action {(in the sense of \eqref{moment})}. For a proof, see for instance \cite[Theorem 4.2.10]{mechanics}. A similar result holds in $b$-symplectic geometry.}

\begin{lemma}[Exact $b$-symplectic forms]\label{exact}
Suppose $(M,Z)$ is a $b$-manifold with exact $b$-symplectic form $\omega=-{}^{b}d\theta$. If $\phi:G\times M\rightarrow M$ is a Lie group action preserving $Z$ and $\theta \in{}^{b}\Omega^{1}(M)$, then an equivariant moment map $J:M\rightarrow\mathfrak{g}^{*}$ is given by  $J^{x}=-\iota_{V_{x}}\theta$. {Here $V_{x}\in\Gamma\left({}^{b}TM\right)$ is the lift of the infinitesimal generator $v_{x}\in\Gamma(TM)$ under the anchor $\rho$.}
\end{lemma}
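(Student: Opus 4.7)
The plan is to mimic the classical symplectic proof, carried out consistently in the $b$-category. First I would verify that $J$ is well-defined: since the action preserves $Z$, each infinitesimal generator $v_x$ is tangent to $Z$ and hence lifts uniquely to a $b$-vector field $V_x\in\Gamma({}^{b}TM)$ with $\rho(V_x)=v_x$. Consequently $J^x:=-\iota_{V_x}\theta$ is a smooth function on $M$ depending linearly on $x$, and these assemble into a smooth map $J\colon M\to\mathfrak{g}^*$.

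The moment map identity itself reduces to Cartan calculus in the $b$-category. Since each $\phi_g$ preserves $Z$, it is a $b$-diffeomorphism, so the $b$-pullback ${}^{b}\phi_g^*\theta$ is defined; it agrees with $\phi_g^*\theta=\theta$ on $M\setminus Z$ (where the anchor is an isomorphism) and hence on all of $M$ by continuity. Differentiating ${}^{b}\phi_{\exp(-tx)}^*\theta=\theta$ at $t=0$, using that the flow of $V_x$ is the $b$-lift of the flow of $v_x$, gives $\pounds_{V_x}\theta=0$. Cartan's formula then yields
$$
0=\pounds_{V_x}\theta=\iota_{V_x}{}^{b}d\theta+{}^{b}d(\iota_{V_x}\theta)=-\iota_{V_x}\omega-{}^{b}dJ^x,
$$
i.e.\ $\omega^\flat(V_x)=-{}^{b}dJ^x$. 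Using ${}^{b}\Pi^\sharp=-(\omega^\flat)^{-1}$ this rearranges to $V_x={}^{b}\Pi^\sharp({}^{b}dJ^x)$. Applying the anchor and invoking diagram \eqref{lift} together with the identity ${}^{b}dJ^x=\rho^*(dJ^x)$ (valid for scalar functions) produces $v_x=\Pi^\sharp(dJ^x)$, which is exactly the moment map condition \eqref{moment}.

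For equivariance one shows $J^x\circ\phi_g=J^{\mathrm{Ad}_{g^{-1}}x}$. With the convention $v_x(p)=\left.\tfrac{d}{dt}\right|_{0}\exp(-tx)\cdot p$, a direct group-theoretic calculation gives $(\phi_g)_*v_{\mathrm{Ad}_{g^{-1}}x}=v_x$; by Lemma \ref{bder} this lifts to ${}^{b}(\phi_g)_*V_{\mathrm{Ad}_{g^{-1}}x}=V_x$. Combined with ${}^{b}\phi_g^*\theta=\theta$, a one-line computation then reads
$$
J^x(\phi_g(p))=-\theta_{\phi_g(p)}\bigl(V_x|_{\phi_g(p)}\bigr)=-({}^{b}\phi_g^*\theta)_p\bigl(V_{\mathrm{Ad}_{g^{-1}}x}|_p\bigr)=-\theta_p\bigl(V_{\mathrm{Ad}_{g^{-1}}x}|_p\bigr)=J^{\mathrm{Ad}_{g^{-1}}x}(p).
$$

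The only substantive point is the bookkeeping between ordinary and $b$-versions of pullback, differential, and anchor; the hypothesis that the action preserves $Z$ is precisely what makes each $b$-construction applied to $\phi_g$ legitimate. Beyond this, the argument is the classical symplectic one transported to the $b$-setting, so I do not anticipate a serious obstacle.
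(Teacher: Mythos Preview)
Your proof is correct, but it takes a different route from the paper's. The paper gives a two-line density argument: it observes that $J$ is smooth, then restricts the action to the honest symplectic manifold $M\setminus Z$, where the classical result from \cite{mechanics} yields $\Pi^{\sharp}(dJ^{x})=v_{x}$ and equivariance; both identities then extend to all of $M$ by continuity, since both sides are smooth and $M\setminus Z$ is dense. Your argument, by contrast, is carried out entirely inside the $b$-category via Cartan calculus and the anchor diagram \eqref{lift}, never appealing to the classical symplectic statement.

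Each approach has its merits. The paper's is shorter and makes the point that nothing beyond smoothness of $J$ is really needed once the symplectic case is known. Yours is more self-contained and intrinsic, and illustrates that the $b$-Cartan machinery (Lie derivative, Cartan formula, Lemma~\ref{bder}) is adequate to run the classical proof verbatim. One small redundancy: the hypothesis already asserts that the action preserves $\theta\in{}^{b}\Omega^{1}(M)$, so ${}^{b}\phi_{g}^{*}\theta=\theta$ is given and need not be deduced by continuity from $M\setminus Z$.
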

\begin{proof} Clearly  $J:M\rightarrow\mathfrak{g}^{*}$  is a smooth map.
Restricting the action to the symplectic manifold $\left(M\setminus Z,\omega|_{M\setminus Z}\right)$, we know that $G\curvearrowright \left(M\setminus Z,-d\theta|_{M\setminus Z}\right)$ admits a moment map given by $J|_{M\setminus Z}$.
Hence the equality $\Pi^{\sharp}\left(dJ^{x}\right)=v_{x}$ 
holds on the dense subset $M\setminus Z$, and as both sides are smooth on $M$, it holds on the whole of $M$.
Similarly, since $J|_{M\setminus Z}$ is equivariant, it follows that $J$ itself is equivariant.
\end{proof}

{An example of Corollary \ref{reduction} and  Lemma \ref{exact} is $b$-cotangent bundle reduction. Let us recall the picture in symplectic geometry: given an action $G\curvearrowright M$, its cotangent lift $G\curvearrowright (T^{*}M,-d\theta_{can})$ preserves the tautological one-form $\theta_{can}$ and therefore it comes with an equivariant moment map $J:T^{*}M\rightarrow\mathfrak{g}^{*}$ given by \eqref{moment map} 
\[
\langle J(\alpha_{q}),x\rangle=-\langle\alpha_{q},v_{x}(q)\rangle.
\]
Here $v_{x}$ is the infinitesimal generator of $G\curvearrowright M$ corresponding with $x\in\mathfrak{g}$.
If the action $G\curvearrowright M$ is free and proper, then symplectic reduction gives $J^{-1}(0)/G\cong T^{*}(M/G)$. Indeed, in some detail, there is a well-defined map
{
\[
\varphi:J^{-1}(0)\rightarrow T^{*}(M/G),\;
\alpha_{q}\mapsto\tilde{\alpha}_{pr(q)},
\]
where $pr:M\rightarrow M/G$ denotes the projection and
\[
\tilde{\alpha}_{pr(q)}:T_{pr(q)}(M/G)\cong\frac{T_{q}M}{T_{q}(G\cdot q)}\rightarrow\mathbb{R},\;[v]\mapsto\alpha_{q}(v).
\]
}
Since the fibers of $\varphi$ coincide with the orbits of $G\curvearrowright J^{-1}(0)$, there is an induced bijection $\overline{\varphi}:J^{-1}(0)/G\rightarrow T^{*}(M/G)$, which is in fact a symplectomorphism (see \cite[Theorem 2.2.2]{reduction}).
}

\begin{cor}[{Group actions on $b$-cotangent bundles}]\label{ex:bT*group}
Given a $b$-manifold $(M,Z)$ {and a connected Lie group $G$}, assume that $\phi: G\times M\rightarrow M$ is a free and proper action that preserves $Z$. Denote by $\Phi: G\times {}^{b}T^{*}M\rightarrow {}^{b}T^{*}M$ the $b$-cotangent lift of this action, that is
\[
\left\langle \Phi_{g}(\alpha_{q}),v \right\rangle=\left\langle \alpha_{q}, \left[{}^{b}\left(\phi_{g^{-1}}\right)_{*}\right]_{\phi_{g}(q)}v\right\rangle
\]
for $\alpha_{q}\in{}^{b}T_{q}^{*}M$ and $v\in{}^{b}T_{\phi_{g}(q)}M$. {Note that the action $\Phi$ is also free and proper, and that it preserves the hypersurface ${}^{b}T^{*}M|_{Z}$.} {The action  $\Phi$ has a canonical equivariant moment map $J$, and  
$J^{-1}(0)/G$ is canonically $b$-symplectomorphic to ${}^{b}T^{*}(M/G)$.}
\end{cor}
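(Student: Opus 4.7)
The plan is to follow the symplectic template, using the $b$-geometric machinery developed in Section \ref{sec:background}. First I would verify that the $b$-cotangent lift $\Phi$ preserves the tautological $b$-one-form $\theta$ on ${}^{b}T^{*}M$: since $\pi \circ \Phi_g = \phi_g \circ \pi$ (so that ${}^{b}\pi_{*} \circ {}^{b}(\Phi_g)_{*} = {}^{b}(\phi_g)_{*} \circ {}^{b}\pi_{*}$), one gets for $v \in {}^{b}T_{\alpha_q}({}^{b}T^{*}M)$
\[
\bigl({}^{b}\Phi_g^{*}\theta\bigr)_{\alpha_q}(v)=\bigl\langle \Phi_g(\alpha_q), {}^{b}(\phi_g)_{*}({}^{b}\pi_{*}v)\bigr\rangle=\bigl\langle \alpha_q, {}^{b}(\phi_{g^{-1}})_{*}{}^{b}(\phi_g)_{*}{}^{b}\pi_{*}v\bigr\rangle=\theta_{\alpha_q}(v),
\]
using the definition of $\Phi_g$ in the statement. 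Noting also that $\Phi$ preserves ${}^{b}T^{*}M|_{Z}$ (since $\phi$ preserves $Z$), Lemma \ref{exact} yields a canonical equivariant moment map $J\colon {}^{b}T^{*}M \to \mathfrak{g}^{*}$ with $J^{x}=-\iota_{V_{x}}\theta$.

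Next I would evaluate $J$ pointwise. The infinitesimal generator of $\Phi$ corresponding to $x \in \mathfrak{g}$ is a vector field on ${}^{b}T^{*}M$ whose image under $\pi_{*}$ is the infinitesimal generator $v_{x}$ of $\phi$. Invoking Lemma \ref{bder}, $({}^{b}\pi_{*})(V_{x}) = \overline{v_{x}}$, the canonical lift of $v_{x}$ to $\Gamma({}^{b}TM)$. Hence $J^{x}(\alpha_q)=-\langle \alpha_q, \overline{v_{x}}(q)\rangle$, so that $J^{-1}(0)$ is the annihilator $\sigma(K)^{0} \subset {}^{b}T^{*}M$, where $K\subset TM$ is the regular distribution tangent to the $G$-orbits and $\sigma\colon K\hookrightarrow {}^{b}TM$ is the canonical lift of Lemma \ref{lem:distrsplit} (note $K$ is tangent to $Z$ since $G$ preserves $Z$). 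By Corollary \ref{reduction}, $J^{-1}(0)/G$ inherits a reduced $b$-symplectic form $\underline{\Omega}$ characterized by ${}^{b}q^{*}\underline{\Omega}={}^{b}j^{*}(-{}^{b}d\theta)$, where $j$ and $q$ denote the inclusion and projection.

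To construct the identification with ${}^{b}T^{*}(M/G)$, note that $pr\colon M \to M/G$ is a surjective $b$-submersion onto the $b$-manifold $(M/G, Z/G)$, so ${}^{b}pr_{*}\colon {}^{b}T_{q}M \to {}^{b}T_{pr(q)}(M/G)$ is surjective with kernel $\sigma(K)_q$ (by Corollary \ref{split}). Consequently ${}^{b}pr^{*}$ identifies ${}^{b}T^{*}_{pr(q)}(M/G)$ with $\sigma(K)^{0}_q = (J^{-1}(0))_q$. This defines a smooth bundle map $\varphi\colon J^{-1}(0) \to {}^{b}T^{*}(M/G)$ sending $\alpha_q$ to the unique $\tilde{\alpha}_{pr(q)}$ with ${}^{b}pr^{*}\tilde{\alpha}_{pr(q)}=\alpha_q$. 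Using that $pr\circ\phi_g = pr$ and thus ${}^{b}(\phi_{g^{-1}})_{*}$ preserves the space of lifts, $\varphi$ is $G$-invariant, and a fiberwise count shows that the induced map $\overline{\varphi}\colon J^{-1}(0)/G \to {}^{b}T^{*}(M/G)$ is a $b$-diffeomorphism.

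Finally I would check the $b$-symplectic compatibility by comparing tautological $b$-one-forms $\theta_M$ and $\theta_{M/G}$. Unwinding the definitions and using $\pi_{M/G}\circ \varphi = pr \circ \pi_M|_{J^{-1}(0)}$,
\[
({}^{b}\varphi^{*}\theta_{M/G})_{\alpha_q}(v)=\bigl\langle \tilde{\alpha}_{pr(q)}, {}^{b}pr_{*}{}^{b}\pi_{M,*}v\bigr\rangle=\bigl\langle \alpha_q, {}^{b}\pi_{M,*}v\bigr\rangle=({}^{b}j^{*}\theta_M)_{\alpha_q}(v),
\]
so ${}^{b}\varphi^{*}(-{}^{b}d\theta_{M/G})={}^{b}j^{*}(-{}^{b}d\theta_M)={}^{b}q^{*}\underline{\Omega}$. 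Injectivity of ${}^{b}q^{*}$ on $b$-forms (as $q$ is a surjective submersion) yields $\overline{\varphi}^{*}(-{}^{b}d\theta_{M/G})=\underline{\Omega}$. The main delicate point is the first and third paragraph: ensuring the $b$-geometric identifications (particularly $\ker({}^{b}pr_{*})=\sigma(K)$ and $\mathrm{image}({}^{b}pr^{*})=\sigma(K)^{0}$) are used consistently so that $\varphi$ is well-defined as a $b$-map; once this is in place the tautological-form calculation proceeds exactly as in the symplectic case.
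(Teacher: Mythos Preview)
Your proof is correct and follows the same overall architecture as the paper: invoke Lemma~\ref{exact} for the moment map, identify $J^{-1}(0)=\sigma(K)^{0}$ via Lemma~\ref{bder} and Lemma~\ref{lem:distrsplit}, use Corollary~\ref{split} to get $\ker({}^{b}pr_{*})=\sigma(K)$, and build $\varphi$ from the resulting identification ${}^{b}T^{*}_{pr(q)}(M/G)\cong\sigma(K)^{0}_{q}$.

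The one substantive difference is in how you finish. The paper verifies both that the fibres of $\varphi$ coincide with the $G$-orbits and that $\overline{\varphi}$ is a $b$-symplectomorphism by a density argument: it restricts to the complement of $Z$, invokes the classical symplectic result there, and then extends by continuity (for the distributions) and via Proposition~\ref{prop:bcoisored} (for the forms). You instead argue both points directly in $b$-geometry: $G$-invariance of $\varphi$ from $pr\circ\phi_{g}=pr$, and the $b$-symplectomorphism property from the pullback identity ${}^{b}\varphi^{*}\theta_{M/G}={}^{b}j^{*}\theta_{M}$ together with the characterization \eqref{redform}. Your route is more self-contained and avoids appealing to the symplectic case; the paper's route is shorter because it outsources the verification to a known result. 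Either is fine.
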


 \begin{proof}
Denote the infinitesimal generators of $\phi$ by $v_{x}=\rho_{M}\left(V_{x}\right)\in\mathfrak{X}(M)$  and those of $\Phi$ by $\overline{v}_{x}=\rho_{\left({}^{b}T^{*}M\right)}\left(\overline{V}_{x}\right)\in\mathfrak{X}\left({}^{b}T^{*}M\right)$, where $x\in \g$. One checks that they are related via
\begin{equation}\label{related}
\pi_{*}(\overline{v}_{x})=v_{x},
\end{equation}
where $\pi:{}^{b}T^{*}M\rightarrow M$ denotes the projection. 
Since the action $\Phi$ preserves the tautological $b$-one form $\theta\in{}^{b}\Omega^{1}\left({}^{b}T^{*}M\right)$, Lemma \ref{exact} gives an equivariant  moment map $J:{}^{b}T^{*}M\rightarrow\mathfrak{g}^{*}$ defined by $J^{x}=-\iota_{\overline{V}_{x}}\theta$. Explicitly, one has
\begin{equation}\label{momentmap}
-\left\langle J(\xi_{p}),x\right\rangle=\left(\iota_{\overline{V}_{x}}\theta\right)(\xi_{p})=\theta_{\xi_{p}}\left(\overline{V}_{x}\right)_{\xi_{p}}=\left\langle \xi_{p},\left({}^{b}\pi_{*}\right)_{\xi_{p}}\left(\overline{V}_{x}\right)_{\xi_{p}}\right\rangle=\left\langle \xi_{p},\left(V_{x}\right)_{p}\right\rangle,
\end{equation}
where the last equality uses \eqref{related} and   Lemma \ref{bder}. Denoting by $K$ the tangent distribution to the orbits of $G\curvearrowright M$ and by $\sigma:K\hookrightarrow{}^{b}TM$ the splitting of the anchor $\rho_{M}:{}^{b}TM\rightarrow TM$ obtained via Lemma \ref{lem:distrsplit} $a)$, the equality \eqref{momentmap} shows that
\begin{equation}\label{preimage}
J^{-1}(0)=\sigma(K)^{0}.
\end{equation}
We now perform reduction on $J^{-1}(0)$ as in Corollary \ref{reduction}. Because the projection map $pr:(M,Z)\rightarrow (M/G,Z/G)$ is a $b$-submersion with kernel $\text{Ker}(pr_{*})=K$, Corollary \ref{split} implies that $\text{Ker}({}^{b}pr_{*})=\sigma(K)$, and therefore
\begin{equation}\label{quotient}
{}^{b}T_{pr(q)}(M/G)\cong\frac{^{b}T_{q}M}{\sigma(K_{q})}.
\end{equation}
It is now clear from \eqref{preimage} and \eqref{quotient} that $b$-covectors in $J^{-1}(0)$ descend to $M/G$, i.e.
we get a well-defined map
{
\[
\varphi:J^{-1}(0)\rightarrow{}^{b}T^{*}(M/G),\;\alpha_{q}\mapsto\tilde{\alpha}_{pr(q)}, 
\]
where
\[
\tilde{\alpha}_{pr(q)}:{}^{b}T_{pr(q)}(M/G)\cong\frac{^{b}T_{q}M}{\sigma(K_{q})}\rightarrow\mathbb{R},\;[v]\mapsto\alpha_{q}(v).
\]
}
It is easy to check that $\varphi$ is a surjective submersion {with connected fibers}.
From symplectic geometry we know that the fibers of $\varphi$ and the orbits of the $G$-action $G\curvearrowright J^{-1}(0)$ coincide on the open dense subset $J^{-1}(0)\setminus\left(J^{-1}(0)\cap\left.{}^{b}T^{*}M\right|_{Z}\right)$ of $J^{-1}(0)$. By continuity, the corresponding tangent distributions must agree on all of $J^{-1}(0)$, and so the same holds for the foliations integrating them. Therefore, the map $\varphi$ descends to a smooth bijective $b$-map 
\[
\overline{\varphi}:J^{-1}(0)/G\rightarrow {}^{b}T^{*}(M/G).
\]
Being a bijective submersion between manifolds of the same dimension, $\overline{\varphi}$ is a diffeomorphism.
The restriction of $\overline{\varphi}$ to the complement of  $\left(J^{-1}(0)\cap\left.{}^{b}T^{*}M\right|_{Z}\right)/G$,
 endowed with the symplectic structure obtained by symplectic (i.e. coisotropic) reduction,
 is a symplectomorphism onto its image. 
 Hence, by Proposition \ref{prop:bcoisored}, $\overline{\varphi}$ is a $b$-symplectomorphism .
\end{proof}

\subsubsection{\underline{Circle bundles}}
\leavevmode
\vspace{0.15cm}

We find examples for Proposition \ref{prop:bcoisored} and Corollary \ref{reduction} by ``reverse engineering''. 

\begin{prop}\label{lem:preq}
Let $(N,\omega)$ be a $b$-symplectic manifold, which for simplicity we assume to be compact.
Let $q\colon C\to N$ be a principal $S^1$-bundle, with  connection $\theta\in \Omega^1(C)$. Denote by $\sigma\in \Omega^2(N)$  the closed 2-form satisfying $d\theta=q^*\sigma$. 
\begin{enumerate}[(i)]
\item  The following is a is $b$-symplectic manifold:
\begin{equation*}
\left(C\times I, \;\;\tilde{\omega}:=  dt\wedge p^*\theta+(t-1) p^*q^*\sigma+{}^{b}p^*\:{}^{b}q^*\omega\right).
\end{equation*}
 Here $I$ is an interval around $1$ with coordinate $t$, and $p\colon C\times I\to C$ the projection.
\item  $C\times\{1\}$ is a strong $b$-coisotropic submanifold, and the reduced $b$-symplectic manifold (as in Proposition \ref{prop:bcoisored}) is isomorphic to $(N,\omega)$.
\end{enumerate}
\end{prop}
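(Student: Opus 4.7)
For part (i), the plan is to rewrite $\tilde{\omega}$ in a splitting of ${}^{b}T(C\times I)$ that is compatible with the principal $S^{1}$-bundle structure and the connection $\theta$, so that non-degeneracy can be read off a block decomposition. First I equip $C$ with the $b$-structure whose hypersurface is $q^{-1}(Z)$, where $Z\subset N$ is the degeneracy locus of $\omega$; this makes both $q$ and $p$ into $b$-maps, so each summand of $\tilde{\omega}$ is a well-defined $b$-$2$-form on $C\times I$. The fundamental vector field $V$ of the $S^{1}$-action is tangent to $q^{-1}(Z)$ and so, by Lemma \ref{lem:distrsplit}~a), lifts canonically to $\bar{V}\in\Gamma({}^{b}TC)$; Lemma \ref{rank} then identifies $\ker({}^{b}q_{*})$ with $\mathbb{R}\bar{V}$. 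Using the connection, define the $b$-horizontal subbundle $\tilde{H}:=\ker(\rho_{C}^{*}\theta)\subset{}^{b}TC$; since $\theta(V)=1$, the subbundle $\tilde{H}$ has corank one, and ${}^{b}q_{*}$ restricts to an isomorphism $\tilde{H}\xrightarrow{\sim}{}^{b}TN$. This yields the splitting ${}^{b}T_{(c,t)}(C\times I)\cong \mathbb{R}\partial_{t}\oplus\mathbb{R}\bar{V}\oplus\tilde{H}_{c}$.

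The $b$-closedness of $\tilde{\omega}$ is a short computation: $d\theta=q^{*}\sigma$ (so $d\sigma=0$, since $q$ is a submersion) together with ${}^{b}d\omega=0$ force the three terms of ${}^{b}d\tilde{\omega}$ to cancel. The main technical point is non-degeneracy. Evaluating $\tilde{\omega}$ on the splitting above, $\partial_{t}$ pairs with $\bar{V}$ to $1$ and annihilates $\tilde{H}$ (because $\rho_{C}^{*}\theta$ vanishes on $\tilde{H}$ and ${}^{b}q_{*}\partial_{t}=0$); the direction $\bar{V}$ also annihilates $\tilde{H}$ (since $V\in\ker(q_{*})$); and on $\tilde{H}_{c}\cong{}^{b}T_{q(c)}N$, $\tilde{\omega}$ restricts to $\omega + (t-1)\rho_{N}^{*}\sigma$. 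Thus $\tilde{\omega}$ is block-diagonal with a symplectic hyperbolic block on $\mathbb{R}\partial_{t}\oplus\mathbb{R}\bar{V}$ and the block $\omega+(t-1)\rho_{N}^{*}\sigma$ on $\tilde{H}_{c}$. The latter is non-degenerate at $t=1$, and by compactness of $N$ remains so for all $t$ in a sufficiently small interval $I$ around $1$; shrinking $I$ accordingly proves (i).

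For part (ii), $C\times\{1\}$ is a $b$-submanifold, being transverse to the degeneracy hypersurface $q^{-1}(Z)\times I$. From the block form of $\tilde{\omega}|_{t=1}$ one reads $({}^{b}T(C\times\{1\}))^{\tilde{\omega}}=\mathbb{R}\bar{V}$, which lies inside ${}^{b}T(C\times\{1\})$, so the submanifold is coisotropic. Its characteristic distribution $\rho(\mathbb{R}\bar{V})=\mathbb{R}V$ has constant rank $1$, matching the codimension, so Proposition \ref{prop:strongrank} gives strong $b$-coisotropicity. The leaves of $\mathbb{R}V$ are the $S^{1}$-orbits, so the reduced manifold is canonically $N$ via $q$. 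By Proposition \ref{prop:bcoisored}, the reduced $b$-symplectic form $\underline{\Omega}$ satisfies ${}^{b}q^{*}\underline{\Omega}={}^{b}i^{*}\tilde{\omega}$, with $i\colon C\times\{1\}\hookrightarrow C\times I$ the inclusion; the first two summands of $\tilde{\omega}$ pull back to zero along $i$, leaving ${}^{b}q^{*}\omega$, and injectivity of ${}^{b}q^{*}$ on $b$-forms (as $q$ is a surjective submersion) forces $\underline{\Omega}=\omega$.
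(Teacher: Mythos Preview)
Your proof is correct and close in spirit to the paper's, but the non-degeneracy argument in part (i) is organized differently. The paper observes that the first two summands of $\tilde{\omega}$ equal $d(t\,p^*\theta)-p^*q^*\sigma$ (giving closedness immediately), and then checks non-degeneracy via the top-power criterion: since $(t-1)\sigma+\omega$ is $b$-symplectic on $N$ for $t$ near $1$, its $n$-th power is nowhere zero, and this forces $\tilde{\omega}^{n+1}$ to be nowhere zero on $C\times I$ after shrinking $I$. You instead build an explicit splitting ${}^{b}T(C\times I)\cong\mathbb{R}\partial_t\oplus\mathbb{R}\bar V\oplus\tilde H$ using the connection, and read off non-degeneracy from the resulting block-diagonal form of $\tilde{\omega}$. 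Both routes reduce to the same key fact (non-degeneracy of $\omega+(t-1)\rho_N^*\sigma$ near $t=1$, with compactness of $N$ giving uniformity); the paper's is shorter, while yours makes the geometry of the connection splitting visible and yields the identification $({}^{b}T(C\times\{1\}))^{\tilde\omega}=\mathbb{R}\bar V$ directly, which you then reuse in (ii). For part (ii) your argument is essentially the same as the paper's: compute ${}^{b}i^*\tilde\omega={}^{b}q^*\omega$, identify the characteristic distribution with $\ker(q_*)$, and conclude via Proposition~\ref{prop:strongrank} and Proposition~\ref{prop:bcoisored}.
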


We make a few observations about   $\tilde{\omega}$. {The summand of $\tilde{\omega}$ containing $\sigma$ is necessary to ensure that $\tilde{\omega}$ is ${}^{b}d$-closed. In the special case that $C$ is the trivial $S^1$-bundle $N\times S^1$, choosing $\theta=d\rho$ for $\rho$ the angle ``coordinate'' on $S^1$ (so $\sigma=0$), the above lemma delivers the product of the $b$-symplectic manifold  $(N,\omega)$ and of the symplectic manifold $(I\times S^1, dt\wedge \theta)$.}

In the special case that $\omega$ equals the closed 2-form $\sigma$, we have $\tilde{\omega}=d(t p^*\theta)$, which can be interpreted as the prequantization of $\sigma$ when the latter is symplectic.

\begin{remark}\label{rem:momapex}
By the above proposition, we actually recover $(N,\omega)$ by moment map reduction, as in Corollary \ref{reduction}. Indeed,  $S^1$ acts on $C\times I$ (trivially on the second factor) preserving the $b$-symplectic form $\tilde{\omega}$ (since $\theta$ is $S^1$-invariant). 
An equivariant moment map is $J(x,t)=t-1$, hence  
 $C\times \{1\} =J^{-1}(0)$.
\end{remark}

\begin{proof} (i) To check that $\tilde{\omega}$ is ${}^{b}d$-closed, notice that its first two summands can be written as $d(t p^*\theta)-p^*q^*\sigma$, which is closed since $\sigma$ is closed.

For every real number $t$ sufficiently close to $1$, $(t-1)\sigma+\omega$ is a $b$-symplectic form on $N$, so its $n$-th power (where $\dim(N)=2n$) is a nowhere-vanishing element of ${}^{b}\Omega^{2n}(N)$. This implies that $\tilde{\omega}^{n+1}$ is a 
nowhere-vanishing element of ${}^{b}\Omega^{2(n+1)}(C\times I)$, shrinking $I$ if necessary. Hence $\tilde{\omega}$ is $b$-symplectic.

(ii) Denote by  $Z\subset N$ the singular hypersurface of $\omega$. Then  the singular hypersurface of $\tilde{\omega}$ is $p^{-1}(q^{-1}(Z))\subset C\times I$, which is 
transverse to $C\times\{1\}$. Therefore the latter is a $b$-submanifold, and is coisotropic since it has codimension one. If $i\colon C\times\{1\}\to C\times I$ denotes the inclusion, then we have ${}^{b}i^*\tilde{\omega}={}^{b}q^*\omega$. One consequence is that
${}^{b}T(C\times\{1\})^{\tilde{\omega}}=\ker ({}^{b}i^*\tilde{\omega})=\ker ({{}^{b}q_*})$. 
Applying the anchor $\rho$, {we obtain that the characteristic distribution $\rho\left({}^{b}T(C\times\{1\})^{\tilde{\omega}}\right)$ of $C\times\{1\}$ is given by $\ker({q_*})$}. Since the latter has constant rank one, by Proposition \ref{prop:strongrank} we conclude that  $C\times\{1\}$ is a strong $b$-coisotropic submanifold. A second consequence is that the reduced $b$-symplectic manifold is isomorphic to $(N,\omega)$.
\end{proof}

A concrete instance of the construction of Proposition \ref{lem:preq} is the following.
\begin{cor}\label{cor:explh}
Let $h$ be any smooth function on $\CC P^1$ that vanishes transversely along a hypersurface.
On $\CC^2$ consider the differential forms 
 $\Omega:=i(d{z}_1\wedge d\conj{z}_1+d{z}_2\wedge d\conj{z}_2)$ (twice the standard symplectic form)
and $\alpha:=\conj{z}_1dz_1+\conj{z_2}dz_2$, and denote by  $r$   the radius. 

\begin{enumerate}[(i)]
 \item In a neighborhood of the unit sphere $S^3$, the following is a $b$-symplectic form:\begin{equation}\label{eq:otildeh}
  \tilde{\omega}=
 \frac{1}{r^2}\left(-1+\frac{1}{P^*h}\right)\left(-\frac{i}{r^2}(\alpha\wedge \conj{\alpha})+\Omega \right)+\Omega,
\end{equation}
where $P\colon \CC^2\setminus\{0\}\to \CC P^1$ is the projection.
\item The unit sphere $S^3$ is a strong $b$-coisotropic submanifold, and the reduced $b$-symplectic manifold is $(\CC P^1,\frac{1}{h}\sigma)$ where $\sigma$ is twice the Fubini-Study symplectic form.
\end{enumerate}
\end{cor}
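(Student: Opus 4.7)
The plan is to obtain the corollary as a concrete instance of Proposition~\ref{lem:preq}, taking $N=\CC P^1$ equipped with the $b$-symplectic form $\omega:=\frac{1}{h}\sigma$ and $q\colon C=S^3\to\CC P^1$ the Hopf fibration with its standard connection 1-form $\theta$. First I would check that $\frac{1}{h}\sigma$ is indeed a $b$-symplectic form on $(\CC P^1,\{h=0\})$: the transverse vanishing of $h$ makes this a well-defined $b$-form on this $b$-manifold, it is non-degenerate because $\sigma$ is a volume form, and it is automatically ${}^bd$-closed since it has top degree on a surface.

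I would then identify a neighborhood of $S^3\subset\CC^2\setminus\{0\}$ with $S^3\times I$, where $I$ is an open interval around $1$, via the diffeomorphism $(z_1,z_2)\mapsto\bigl(r^{-1}(z_1,z_2),\,r^2\bigr)$, so that the coordinate $t$ appearing in Proposition~\ref{lem:preq} corresponds to $r^2$ on $\CC^2$. Under this identification the composition $q\circ p\colon S^3\times I\to\CC P^1$ agrees with the restriction of $P$.

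The core of the proof is a direct computation verifying that, with $\theta$ taken to be (the restriction to $S^3$ of) $\frac{i}{2r^2}(\alpha-\conj{\alpha})$ -- a real, $S^1$-invariant form whose curvature descends to twice the Fubini--Study form on $\CC P^1$ (up to the sign convention fixing the orientation) -- the formula from Proposition~\ref{lem:preq}(i) becomes exactly \eqref{eq:otildeh}. The three key identities, verified by a routine differentiation using $d\alpha=i\Omega$ and $d(r^2)=\alpha+\conj{\alpha}$, are
\[
dt=\alpha+\conj{\alpha},\qquad dt\wedge\theta=-\tfrac{i}{r^2}\alpha\wedge\conj{\alpha},\qquad P^{*}\sigma=\pm\tfrac{1}{r^2}\Bigl(-\tfrac{i}{r^2}\alpha\wedge\conj{\alpha}+\Omega\Bigr).
\]
Substituting these into $dt\wedge p^*\theta+(t-1)\,p^*q^*\sigma+{}^bp^*\,{}^bq^*\omega$ and collecting the factors of $\bigl(-\tfrac{i}{r^2}\alpha\wedge\conj{\alpha}+\Omega\bigr)$ against the leftover $\Omega$ term recovers precisely \eqref{eq:otildeh}.

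Once the two formulas for $\tilde\omega$ are matched, both (i) and (ii) follow immediately from Proposition~\ref{lem:preq}: $\tilde\omega$ is $b$-symplectic on a neighborhood of $S^3$, the sphere $S^3$ is strong $b$-coisotropic, and its reduction is $b$-symplectomorphic to $(\CC P^1,\frac{1}{h}\sigma)$. The main obstacle is purely computational, namely pinning down the normalization of $\theta$ so that its curvature agrees with ``twice the Fubini--Study form'' in the paper's convention, and tracking the signs in the rearrangement; no new conceptual ingredient beyond the preceding proposition is required.
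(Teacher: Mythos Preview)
Your approach is essentially identical to the paper's: apply Proposition~\ref{lem:preq} to the Hopf fibration with its standard connection, identify $t=r^2$ via $(a,t)\mapsto\sqrt{t}\,a$, and compute $P^*\sigma$ and $dt\wedge p^*\theta$ directly in terms of $\alpha$, $\bar\alpha$, and $\Omega$. The only point to watch is the sign of $\theta$: the paper takes $\theta=\mathrm{Im}(\alpha)|_{S^3}$, so its pullback to $\CC^2\setminus\{0\}$ is $-\tfrac{i}{2r^2}(\alpha-\bar\alpha)$ (the negative of your choice), giving $dt\wedge p^*\theta=+\tfrac{i}{r^2}\alpha\wedge\bar\alpha$; with this sign the three terms combine to \eqref{eq:otildeh} exactly as you outline, and you already flagged this normalization as the one detail to be pinned down.
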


\begin{remark}
The diagonal action of $S^1$ on the above neighborhood of the unit sphere $S^3$ in $\CC^2$ preserves $\tilde{\omega}$ and has moment map given by   $v\mapsto ||v||^2-1$. This follows from  Remark \ref{rem:momapex} and the proof below.
\end{remark}

\begin{proof}
On $\RR^4=\CC^2$ we consider the 1-form $\tilde{\theta}=\sum_{j=1}^2x_jdy_j-y_jdx_j$. Notice that we have $d\tilde{\theta}=2\sum_{j=1}^2dx_j\wedge dy_j=\Omega$. 
Consider the unit sphere $S^3$. 
Let ${q}\colon S^3\to \CC P^1$ be the principal bundle given by the diagonal action of $U(1)$ (the Hopf fibration). Then $\theta:=i^*\tilde{\theta}$ is
a connection 1-form on $S^3$, where $i$ is the inclusion. Then $d\theta=q^*\sigma$, where $\sigma$ is the symplectic form on $\mathbb{C}P^{1}$ obtained from $\Omega$ by coisotropic reduction. 
Consider the $b$-symplectic form $\omega:=\frac{1}{h}\sigma$ on $\CC P^1$. Applying Proposition \ref{lem:preq} to $S^3\times I \overset{p}{\to} S^3 \overset{q}{\to} \CC P^1$ yields a $b$-symplectic form $\tilde{\omega}$ on $S^3\times I$, defined by
 \begin{equation}\label{eq:tildeomabs}
 \tilde{\omega}=  dt\wedge p^*\theta+
\left(t-1+\frac{1}{p^*q^*h}\right) p^*q^*\sigma.
\end{equation}

We now make $ \tilde{\omega}$ more explicit. Denote by $p'\colon \CC^2\setminus\{0\}\to S^3$ the projection $v\mapsto v/||v||$, let $r$ denote the radius function $v\mapsto ||v||$.
Then $p'^*(\theta)=\tilde{\theta}/r^2$, since  the Euler vector field $E$ satisfies $\iota_E \tilde{\theta}=0$ and $\mathcal{L}_E (\tilde{\theta}/r^2)=0$. Hence, using $q^*\sigma=d\theta$ and $d \tilde{\theta}=\Omega$
 we obtain
$$p'^*q^*\sigma=d( \tilde{\theta}/r^2)=\frac{1}{r^2}\left(-2\frac{dr}{r}\wedge \tilde{\theta}+\Omega \right).$$
Using $\tilde{\theta}=Im(\alpha)$ and $r^2=z_1\conj{z}_1+z_2\conj{z_2}$ we get $-2\frac{dr}{r}\wedge \tilde{\theta}=-\frac{i}{r^2}(\alpha\wedge \conj{\alpha})$.
If we now use the identification $(a,t)\mapsto \sqrt{t}a$ between $S^3\times I$ and a neighborhood of $S^3$ in $\CC^2$ (so $t=r^2$), then the expression \eqref{eq:tildeomabs} becomes \eqref{eq:otildeh}.
\end{proof}

\begin{remark}
We show directly from its definition \eqref{eq:otildeh} that $\tilde{\omega}$ satisfies the transversality condition required for $b$-symplectic forms. 
As $\left(-\frac{i}{r^2}(\alpha\wedge \conj{\alpha})+\Omega \right)^{\wedge 2}$ vanishes, 
one obtains $\tilde{\omega}^{\wedge 2}=-2(1-\frac{1}{r^2}+\frac{1}{r^2}\frac{1}{P^*h})
d{z}_1\wedge d\conj{z}_1\wedge d{z}_2\wedge d\conj{z}_2$. The dual 4-vector field is thus transverse to the zero section, in a neighborhood of the unit sphere $S^3$.\end{remark}

\begin{ex}
We display an example of a function $h$ on $\CC P^1$ which vanishes on the circle $\RR P^1 \subset \CC P^1$.
The function $g:=Im(\conj{z_1}z_2)=x_1y_2-y_1x_2$ on $S^3$ is $U(1)$-invariant, hence descends to a function $h$ on $\CC P^1$, which is readily seen to vanish exactly on $\RR P^1$. It vanishes linearly there: using homogeneous the coordinate
$w:=z_2/z_1$ on the open subset $\{[z_1:z_2]:z_1\neq 0\}$ of $\CC P^1$, we have\footnote{To see this, first notice that on $S^3$ we have $\conj{z_1}z_2=(\conj{z_1}z_2)/(\conj{z_1}z_1+\conj{z_2}z_2)$,
and then divide numerator and denominator by $\conj{z_1}z_1$.} $h=\frac{Im(w)}{1+|w|^2}$, which vanishes with non-zero derivative on $\{Im(w)=0\}$.
 Since $g$ is quadratic, we have $p'^*g=g/r^2$, hence the coefficient
 $\frac{1}{r^2}\left(-1+\frac{1}{P^*h}\right)$ in equation \eqref{eq:otildeh} reads
$$ \left(-\frac{1}{r^2}+\frac{1}{Im(\conj{z_1}z_2)}\right).$$
 \end{ex}
 
\bibliographystyle{abbrv}

\end{document}